\definecolor{refkey}{gray}{.75}
\def\e{{\rm e}}
\def\eps{\varepsilon}
\def\d{{\rm d}}
\def\div{{\mathrm{div}\;}}
\def\supp{{\mathrm{supp}\, }}
\def\ddt{\frac{\d}{\d t}}
\def\R {\mathbb{R}}
\def\V {{\mathcal V}}
\def\C {{\mathcal C}}
\def\D {{\mathcal D}}
\def\f{\mathbf{f}\;\,}
\def\T {{\mathbb T}}
\def\Q {{\mathcal Q}}
\def\W {{\mathcal W}}
\def \l {\langle}
\def \r {\rangle}
\def \pt {\partial_t}
\def\curl{\mathrm{curl}\,}
\def \and{\quad\text{and}\quad}
\newcommand{\cic}[1]{\mathbf{#1}}
\def\u{\cic{u}}
\def\w{\cic{w}\;}
\def\v{\cic{v}}
\def \no#1#2#3 {{\bf #1} (#3), #2.}
\def \eds#1#2#3 {#1, #2, #3.}
\newtheorem{proposition}{Proposition}[section]
\newtheorem{theorem}{Theorem}[section]
\newtheorem{lemma}{Lemma}[section]
\theoremstyle{definition}
\newtheorem{definition}[proposition]{Definition}
\newtheorem{remark}{Remark}[section]
\newtheorem*{remark*}{Remark}
\newtheorem*{warn*}{A word of warning}
\numberwithin{equation}{section}
\title[Euler equations on planar nonsmooth domains]
{The Euler equations in planar nonsmooth convex domains}
\author[C. Bardos]{Claude Bardos}
\address{Universit{\'e} Paris-Diderot, Laboratoire J.-L. Lions, BP 187,
\newline\indent
75252 Paris Cedex 05, France}
\email{claude.bardos@gmail.com {\rm (C.\ Bardos)}}
\author[F.\ Di Plinio]{Francesco Di Plinio}
\address{INdAM - Cofund Marie Curie Fellow at Dipartimento di Matematica, \newline \indent Universit\`a degli Studi di Roma ``Tor Vergata'', \newline  \indent Via della Ricerca Scientifica,   00133 Roma,  Italy   \newline \indent \centerline{and}   \indent
Dept.\ of Mathematics  \ \& \ The Institute for Scientific Computing and Applied Mathematics,
\newline\indent
Indiana University
\newline\indent
831 East Third Street, Bloomington, Indiana  47405, U.S.A. }
\email{fradipli@indiana.edu {\rm (F.\ Di Plinio)} }
\author[R.\ Temam]{Roger Temam}
\address{The Institute for Scientific Computing and Applied Mathematics, \newline\indent
Indiana University
\newline\indent
831 East Third Street, Bloomington, Indiana  47405, U.S.A.}
\email{temam@indiana.edu {\rm (R.\ Temam)} }
\date{\today}
\begin{document}
\begin{abstract}
  As a model problem   for the
barotropic mode of the primitive equations of the oceans and atmosphere,  we consider the   Euler system on a bounded convex planar domain $\Omega$, endowed 
with non-penetrating boundary conditions. For $\frac43 \leq p \leq 2$, and   initial and forcing data  with $L^p(\Omega)$ vorticity
we show the existence of a weak solution, enriching and extending the results of Taylor \cite{Tay}. 

In the physical  case of a rectangular domain $\Omega=[0,L_1] \times[0,L_2]$,   a similar result holds for all $2<p<\infty$ as well. Moreover, by means of   a new   $\mathrm{BMO}$-type  regularity estimate for the Dirichlet problem on a planar domain with corners, we prove uniqueness of solutions with bounded initial vorticity.  
\end{abstract}
\subjclass{Primary: 35Q31; Secondary: 35J57.}
\keywords{
 Euler system,  nonsmooth domains, endpoint elliptic regularity 
}
\maketitle



{\footnotesize  \noindent In print on
    \emph{Journal of Mathematical Analysis and Applications}, Volume 407, Issue 1, Nov 2013, Pages 69 to 89
    \vskip0.5mm \noindent
    \verb|http://dx.doi.org/10.1016/j.jmaa.2013.05.005|
    \vskip0.5mm \noindent
    Received 1 Dec  2012, available online 9 May 2013.
    Submitted by Pierre Lemarie-Rieusset}
\section{Introduction}\label{s1}

Let $\Omega \subset \R^2$ be a bounded open set.  We are concerned with the Euler equations,    describing the motion of a perfect inviscid fluid inside $\Omega$,
\begin{equation} \label{P}  \tag{P}
\begin{cases}
\pt \cic{u}(x,t)  + (\u \cdot \nabla) \cic{u}(x,t) + \nabla \pi(x,t) = {\f}(x,t), & x\in \Omega, t \in (0,T), \\
\nabla \cdot \cic{u} (x,t) =0, & x\in \Omega, t \in (0,T), \\
\end{cases}
\end{equation}
where $\u(x,t)=\big(u_1(x_1,x_2,t), u_2(x_1,x_2,t)\big)$ stands for the velocity of the fluid particle located at $x$ at time $t$,  $\nabla \pi (x,t)$ stands for the pressure gradient, and ${\f}={\f}(x,t)$ is an external forcing term. We endow \eqref{P} with initial and boundary conditions
\begin{align}
&\u(x,0)= \u_0(x), \qquad\; \; \;\;x \in \Omega, \label{P-IC} \\
& \u(x,t) \cdot \mathbf{n}(x)= 0, \qquad  x \in  \partial \Omega, \, t \in (0,T) \label{P-BC};
\end{align}
we remark that the impermeability boundary condition \eqref{P-BC} has to be properly reformulated when $\partial \Omega$ is not regular enough to admit a normal vector $\mathbf{n}$ almost everywhere.

This work is primarily motivated by the study of the well-posedness of the barotropic mode of the inviscid primitive equations of the atmosphere and the oceans
\cite{RTTHandbookJMPA}.  As explained in  \cite{CST10,CSTT12,RTTHandbookJMPA}, \ a certain vertical modal expansion of the primitive equations leads to an infinite system of coupled barotropic - baroclinic modes.  In a first approximation, one can neglect the baroclinic modes and we obtain for the barotropic mode a system of equations very similar to the two-dimensional inviscid Euler equations in a rectangle.  Henceforth this system is called the barotropic system. The study of the well-posedness of the barotropic system is thus very similar to the study of the well-posedness of the (inviscid) incompressible Euler equations in a rectangle $\Omega.$ {In this article we consider the particular case of exactly the Euler system; the more general case will be considered elsewhere.}

The study of the well-posedness of the Euler equations of incompressible fluid has a long history regarding both weak and strong solutions, starting with \cite{LL,W}, the latter article of Wolibner containing the first proof of global in time existence and uniqueness of regular solutions in two dimensions. This result was simplified and extended by Kato in \cite{Kato};  when $\Omega$ is sufficiently smooth, with some additional work, one obtains up to $\mathcal{C}^\infty$ solutions, see e.\ g.\ \cite{Tem75,Tem86}, {or even analytic solutions \cite{Bardos2}.}

The notion of weak solution to the two-dimensional Euler equations has been introduced by Yudovich \cite{Yudo1}. Yudovich, and later Bardos with a different (vanishing viscosity) approach \cite{Bardos} consider initial data with $L^p$ vorticity, and show existence of weak solutions and, in the case of bounded initial vorticity, uniqueness. Among many references on the well-posedness of the incompressible Euler equations in a bounded smooth domain,  let us quote the classical articles \cite{Bardos,BM,Yudo2} (see also \cite{Tem75,Tem86}).
 Other weaker notions of solutions (e.\ g.\ initial datum in $\cic{L}^2(\Omega)$, with no assumption on the initial vorticity) have been considered by several   authors. See for instance  \cite{DipL,Sch1993,Shn03} and Remark  \ref{remturb} below.
 
{With the motivations indicated above, our aim in this article is twofold. Firstly, for general bounded convex domains $\Omega$, and for divergence-free initial datum $\cic{u}_0$ with $L^p(\Omega)$ vorticity, we prove the existence of solutions of \eqref{P} in $L^\infty(0,T; \cic{W}^{1,p}(\Omega))$, in the range $\frac43 \leq p \leq 2$.   
 Our proof follows the general scheme first devised in \cite{Tay}} {(which considers the range $\frac{12}{7}<p\leq2$), developing several points not explicitly addressed therein}: a weak solution is obtained as the limit of smooth solutions to \eqref{P} set on smooth convex subdomains $\Omega_n$ increasing to $\Omega$. {We remark that this approach has also been used in \cite{GVA} to construct (possibly weaker notions of) Euler solutions in more general domains  (complements of a finite number of compact connected sets with positive Sobolev capacity); however, for the class of domains under our consideration, our proof is simpler than the one in \cite{GVA}.}

{Proceeding as in \cite{Tay}, we make a more systematic study of the approximated problems $($P$_n)$.  In particular, we impose a uniform Lipschitz character to the domains $\Omega_n$. This uniformity reflects on the uniform boundedness of the Leray projectors associated to each subdomain, which we exploit in our compactness arguments. As a result, our   solution is slightly more regular than the one constructed in \cite{Tay},   being continuous in time with values in $\cic{L}^2(\Omega)$ and belonging to $\cic{W}^{1,p}(\Omega) $ for each time $t \in [0,T]$, and satisfying \eqref{P}   almost everywhere in $\Omega \times (0,T)$.}
 {The range of exponents $1 <p <\frac43$ can also be dealt with, by working with a weaker notion of solution than the one given in Subsection \ref{DefWS}, as for example in \cite{GVA}}. However, the restriction $1<p\leq2$ corresponds to   the sharp range of exponents for the  regularity of the Biot-Savart law (see Theorem \ref{ercvx}), giving the gradient of the velocity in terms of the vorticity, in a general convex domain, and seems unavoidable if one aims for a \emph{reversible} existence result (in the sense that $\u(t)$ for $t>0$ belongs to the same space  where the data $\u(0)$ is required to be, so that we can solve the backward Euler equations with initial data $\u(t)).$  The proof of existence of solutions is the object of   Proposition \ref{ThEx} and Theorem \ref{ThEx1}.

 {Our second aim in the article is to consider domains $\Omega$ of specific interest to us, in particular the rectangle $\Omega=[0,L_1]\times[0,L_2]$.
In this case we are able, in Theorem \ref{ThEx3}, to extend the existence result of Theorem \ref{ThEx1} to the range $p\in (2, \infty]$. Furthermore, and this is the main object of Theorem \ref{ThEx3}, we prove   uniqueness of solutions in the case $p=\infty$.}
To the best of our knowledge, this is the first uniqueness result for Euler solutions on a domain  with corners without requiring additional assumptions on the initial vorticity other than being bounded. In \cite{Lac}, the author proves a uniqueness result for a class of domains somehow complementary to ours, that is domains with a finite number of corners having angles greater than $\pi/2$, assuming that the initial vorticity is bounded and has definite signum.  Our proof follows Yudovich's energy method, and relies on the endpoint $L^\infty(\Omega) \to \cic{W}^{2,\mathrm{bmo}}(\Omega)$ regularity result   for the solution to the Dirichlet problem on a rectangle, Proposition \ref{bmolaw}, which appears to be new, to the best of our knowledge.  We do not dwell on the latter point, but unbounded initial vorticities with $\log\log$-type blowup of the $L^p$-norms as $p\to \infty$, like in \cite{Yudo2}, would also suffice for uniqueness.

Theorem \ref{ThEx3} holds \emph{verbatim}  for a more general class of domains,  that is bounded domains with piecewise smooth boundary and with corners  of aperture of the form $\alpha=\frac\pi m$, for some integer $m\geq 2.$ We briefly discuss this extension in Subsection \ref{remarkpoly}.
 
  \subsection*{Plan of the paper.} In Section \ref{s2}, we develop the necessary tools for the analysis of \eqref{P} on a bounded convex non-smooth domain $\Omega$. Section \ref{Sec3} contains the endpoint-type regularity result for the Dirichlet problem on a rectangle, which will be instrumental in establishing uniqueness of solutions. In Section \ref{s3} we give a weak formulation of \eqref{P}, and construct a weak solution to \eqref{P} on a bounded convex domain (Proposition \ref{ThEx}). Section \ref{s5} contains the statements  and the proofs of the main results, Theorems \ref{ThEx1} and \ref{ThEx3}.   In Section \ref{s6}, we make some additional remarks: in particular,   we briefly outline, for comparison's sake, the analogue of Theorem  \ref{ThEx3} in the space-periodic case $\Omega = \mathbb T^2. $ We also discuss some extensions of Theorem \ref{ThEx3} to a more general class of domains with corners.

\subsection*{Notation.} Given a domain $\Omega \subset \R^2$, and  scalar functions $u,v:\Omega \to \R$,  vector valued functions $\u,\v:\Omega \to \R^2$, we denote
$$
(u,v)_{\Omega} := \int_{\Omega} u   v,\qquad (\u,\v)_{\Omega} := \int_{\Omega} \u \cdot \v.
$$
Throughout, \label{nota} for $p \in [1,\infty]$, we use the notations
$$
p'= \frac{p}{p-1}, \qquad p^*= \begin{cases} \frac{2p}{2-p} & 1\leq p <2 \\ \infty &p\geq 2\end{cases}
$$ respectively for the   H\"older and Sobolev conjugate exponents of $p$.
 
We set up our notation for bump functions. Let $d\geq 1,$ and $\phi^d:\R^d\to \R$ be a smooth nonnegative radial function    supported in $ {\{|x|\leq \frac12\}}$ and with $\int_{\R^d} \phi^d=1$. We will make use of the $L^1$-normalized bump functions$$\phi^d_\eps(x) = (\mathrm{Dil}^1_\eps \phi^d\big)(x):=  \eps^{-d}\phi\big(\eps^{-1}x\big), \qquad \eps>0. $$ 

\subsubsection*{Acknowledgments.}
This work was supported in part by NSF Grants DMS 0906440, and DMS 1206438 and by the Research Fund of Indiana University.  
The authors acknowledge very useful discussions on the subject  with Madalina Petcu  and wish to thank Vlad Vicol for bringing to their attention very
useful references. Furthermore, the authors are grateful to the anonymous referee for his/her valuable comments to the first draft of this article, leading to an improvement of the final presentation.

\section{Elliptic regularity in a bounded convex domain} \label{s2}
In this section, we set the foundation for our analysis of the Euler system \eqref{P}. Throughout the section,  $\Omega $ is an open, bounded, convex   subset of $\R^2$ which contains the origin. In Section \ref{Sec3}, we will specialize to the case of a rectangular domain and develop further elliptic regularity results.

 We first recall the analytic properties of the boundary $\partial\Omega$ and  construct an approximation of
$\Omega$ by an increasing sequence of convex smooth subdomains with  uniformly Lipschitz boundary.
Then, we describe the normal trace operator on $\Omega$, introduce the class of tangential vector fields, and establish the Helmholtz decomposition of    $\mathbf{L}^p(\Omega)$, for $1<p<\infty$.
Finally, we discuss some   regularity results for the Dirichlet problem in $\Omega$, which we exploit to define the spaces in which the evolution of the Euler system \eqref{P} will take place.  \subsection{Regularity and approximation of bounded convex domains}
\noindent
We begin with a proposition.
\begin{proposition} \label{cvx1} Let $\Omega$ be a bounded convex open set containing the origin. There exist positive constants $M_\Omega, \delta_\Omega, $ and a finite collection of open squares $\{Q_i: i=1, \ldots, N_\Omega\}$ of diameter $\delta$ such that:
\begin{itemize}
\item[$\cdot$] $\displaystyle \overline{\Omega} \subset \bigcup_{i=1}^{N_\Omega} Q_i, \quad Q_i = c(Q_i) + Q_0,$   $\quad Q_0=\{ |y_1|, |y_2| < \delta_\Omega,  \},$
\item[$\cdot$] whenever $ Q_i \cap \partial \Omega\neq \emptyset   $, there exists a function  $\beta_i:(-\delta_\Omega,\delta_\Omega) \to \R$, in the coordinates with origin the center of $Q_i$ and oriented along the sides of $Q_i$, with the properties
\begin{itemize}
\item[$\cdot$] $\beta_i$ is convex and Lipschitz with constant $M_\Omega$,
\item[$\cdot$] $ Q_i \cap \partial \Omega=\{(y_1,\beta_i(y_1): y_1 \in (-\delta_\Omega,\delta_\Omega) \}$, \item[$\cdot$] $Q_i \cap  \Omega=\{(y_1,y_2) \in Q_i: y_2>\beta_i(y_1) \}$,
\end{itemize}
\item[$\cdot$] $\Omega$ has the strong local Lipschitz property with constants $M_\Omega, \delta_\Omega, N_\Omega $.
\end{itemize}
\end{proposition}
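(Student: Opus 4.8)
The plan is to reduce the whole statement to the \emph{inner} and \emph{outer ball} bounds together with local supporting lines, the only genuinely geometric input being a uniform lower bound on the interior opening angles. Since $\Omega$ is open and bounded with $0\in\Omega$, I first fix $0<r\le R$ with $B(0,r)\subset\Omega\subset B(0,R)$. For each $p\in\partial\Omega$ I pick a supporting line at $p$ with outer unit normal $\nu_p$; convexity gives $\langle x-p,\nu_p\rangle\le 0$ for every $x\in\overline{\Omega}$, and testing this against $x\in B(0,r)$ yields $r\le\langle p,\nu_p\rangle\le R$. Hence $\nu_p$ makes an angle at most $\theta_0:=\arccos(r/R)$ with the radial direction $p/|p|$, so the inward normal $-\nu_p$ points, up to this bounded tilt, toward the origin. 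In the orthonormal frame centred at $p$ with second axis along $-\nu_p$ the supporting line is horizontal and $\Omega$ lies locally above it, so $\partial\Omega$ is near $p$ the graph $y_2=\beta_p(y_1)$ of a \emph{convex} function with $\beta_p(0)=0$ and $\beta_p\ge 0$. It then remains to bound the slope of $\beta_p$ and the size of the chart uniformly in $p$, and to extract a finite cover.

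The heart of the argument, and the step I expect to be the main obstacle, is the slope bound, where corners are the only difficulty. Fix $p\in\partial\Omega$ and let $\alpha_p\in(0,\pi]$ be the interior opening of $\Omega$ at $p$ (with $\alpha_p=\pi$ at smooth points). The convex hull of $\{p\}\cup B(0,r)$ lies in $\overline{\Omega}$ and, seen from the apex $p$, is a cone of half-aperture $\psi_p:=\arcsin(r/|p|)\ge\arcsin(r/R)=:\psi_0$ directed toward the origin. Near $p$ the set $\Omega$ is contained in the angular sector of opening $\alpha_p$ cut out by the supporting lines, so that cone must fit inside the sector, forcing
\[
\alpha_p\ \ge\ 2\psi_p\ \ge\ 2\psi_0\ >\ 0 .
\]
Thus cusps are excluded despite the possible presence of corners. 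Choosing at a corner the frame whose second axis bisects the opening, the two one-sided slopes of $\beta_p$ are $\pm\cot(\alpha_p/2)$, while at a smooth point the slope is controlled by the normal tilt $\theta_0=\tfrac\pi2-\psi_0$; using $\alpha_p\ge 2\psi_0$ in either case,
\[
|\beta_p'|\ \le\ \cot(\alpha_p/2)\ \le\ \cot\psi_0\ =:\ M_\Omega\ =\ \frac{\sqrt{R^2-r^2}}{r},
\]
a \emph{single} constant depending only on $r,R$. Equivalently, every inward normal along the chart stays within $\theta_0<\tfrac\pi2$ of the vertical, so the boundary tangent is never vertical and the graph representation is legitimate.

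Finally I would upgrade these pointwise graphs to the stated finite cover. The cone estimate is scale invariant and, together with $\Omega\subset B(0,R)$ and the bounded turning of $\partial\Omega$, furnishes a threshold $\delta_\Omega>0$ such that for every $p\in\partial\Omega$ the open square $Q_p$ of half-side $\delta_\Omega$ centred at $p$ and oriented along the frame above meets $\partial\Omega$ exactly in the graph of the convex $M_\Omega$-Lipschitz function $\beta_p$ over $(-\delta_\Omega,\delta_\Omega)$, with $Q_p\cap\Omega=\{(y_1,y_2)\in Q_p:y_2>\beta_p(y_1)\}$; here $\delta_\Omega$ is taken small enough that the normal rotation inside $Q_p$ stays below $\pi-2\psi_0$, so $\partial\Omega$ cannot re-enter $Q_p$ from its far side. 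Since $\partial\Omega$ is compact I retain finitely many such squares covering a neighbourhood of $\partial\Omega$; adding finitely many congruent squares contained in $\Omega$ to cover the compact remainder of $\overline{\Omega}$ produces the cover $\overline{\Omega}\subset\bigcup_{i=1}^{N_\Omega}Q_i$ with $Q_i=c(Q_i)+Q_0$. The uniform constants $M_\Omega,\delta_\Omega$ and the count $N_\Omega$ are precisely the data of the strong local Lipschitz property, which is thereby established.
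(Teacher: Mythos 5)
Your overall strategy (inner/outer ball, supporting lines, the cone bound $\alpha_p\ge 2\psi_p$ excluding cusps) is sound and is genuinely different from the paper, which simply cites Grisvard \cite[Corollary 1.2.2.3]{Grisvard} for the Lipschitz charts, observes convexity of $\beta_i$ from convexity of its epigraph, and cites \cite[IV.4.2]{Adams} for the strong local Lipschitz property. However, your execution has a genuine gap in the final covering step. You place a square $Q_p$ with a uniform half-side $\delta_\Omega$ at \emph{every} boundary point $p$, in $p$'s own frame, claiming $\delta_\Omega$ can be chosen so small that ``the normal rotation inside $Q_p$ stays below $\pi-2\psi_0$.'' This cannot be arranged: the rotation of the normal across a single corner of interior angle $\alpha_z$ is the fixed jump $\pi-\alpha_z$, which no shrinking of $\delta_\Omega$ reduces, and once $\delta_\Omega$ is fixed there are smooth points $p$ within distance $\delta_\Omega$ of the corner whose square must contain it. Concretely, take $\Omega$ a rectangle (the paper's main case, $\alpha_z=\pi/2$) and $p$ on one edge at distance $\delta_\Omega/2$ from a vertex: in $p$'s frame (vertical axis along the inward edge normal) the adjacent edge enters $Q_p$ as a vertical segment, so $Q_p\cap\partial\Omega$ is not a graph $y_2=\beta_p(y_1)$ in that frame, no matter how small $\delta_\Omega$ is; for a thin rhombus ($\alpha_z<\pi/2$) the second edge even doubles back over the same $y_1$-interval. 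The resolution is not per-point charts but a structured cover: corners must be charted \emph{at the corner} in the bisecting frame, and nearby points must borrow the corner's chart.

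A second, related flaw is the slope bound $|\beta_p'|\le\cot(\alpha_p/2)$ claimed on the whole chart. Convexity makes $\beta_p'$ nondecreasing, so the one-sided slopes $\pm\cot(\alpha_p/2)$ at the centre are the \emph{smallest} values of $|\beta_p'|$ on each side, not the largest; off-centre the slope can grow, and with your frames it can in fact reach vertical (the rectangle example above). To control the slope at a chart point $q\ne p$ you must use the supporting line at $q$ itself, namely $\langle q,\nu_q\rangle\ge r$, together with a bound on how far the frame vertical at $p$ tilts from $-q/|q|$; a naive triangle inequality gives tilt up to $\pi-2\psi_0$, which exceeds $\pi/2$ unless $r/R$ is large, so this genuinely requires the missing structural step. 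The standard repair, which would complete your argument: the turning of the convex boundary is a finite measure of total mass $2\pi$, so there are at most finitely many ``sharp'' corners with exterior angle above a threshold $\eps$; chart each of these at the corner with the bisecting frame (where your cone bound does give Lipschitz constant at most $\cot\psi_0$ near the centre), and on the complementary compact part of $\partial\Omega$ use that every point has a neighbourhood with turning below $2\eps$, plus a Lebesgue-number argument, to get uniform small-slope graphs. Absent this (or a citation as in the paper), the proof as written does not establish the uniform $\delta_\Omega$ and the graph representation, which are the substance of the proposition.
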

\begin{proof} It is known (see \cite[Corollary 1.2.2.3]{Grisvard} that  $\Omega$ bounded and convex implies that $\Omega$ has a Lipschitz boundary (in the sense of  \cite[Definition 1.2.2.1]{Grisvard}), which is exactly what is described in the first two assertions. The fact that $\beta_i$ is convex is a consequence of the fact that its epigraph $Q_i \cap  \Omega$ is convex. Finally, the first two assertions imply the strong local Lipschitz property with constants $M_\Omega, \delta_\Omega, N_\Omega $ as described in \cite[IV.4.2]{Adams}.
\end{proof}

\subsubsection{Approximation by smooth convex subdomains} \label{approxdom} We    construct   a  sequence of smooth convex domains $\Omega_n$
increasing to $\Omega$, that is
\begin{equation} \label{gammacv}
\Omega_n  \textrm{ smooth convex}, \qquad \Omega_n \Subset \Omega_{n+1} \Subset \cdots \Subset \Omega, \qquad \Omega= \bigcup_{n} \Omega_n.
\end{equation}
and with the property that
\begin{equation} \label{gammacv2}
\textrm{the constants in the strong local Lipschitz property of } \;\Omega\; \textrm{ are uniform in } n.
\end{equation}
We introduce the Minkowski functional of $\Omega$ $$\mu_\Omega: \overline{\Omega} \to [0,\infty), \qquad   \mu_\Omega(x)= \inf\;\{\lambda>0: \lambda^{-1}x \in \overline{\Omega}\} .$$ The function $\mu_\Omega$ is   a convex  function on $\overline{\Omega}$ (see \cite[pp.\ 57-59]{Hormander}). A convex function on a compact subset of any normed space is globally Lipschitz (see \cite{RV} for a  simple proof): thus, call $L_\Omega$ the Lipschitz constant of $\mu_\Omega$. The function $\rho=\mu_\Omega-1$ is convex and Lipschitz with the same constant $L_\Omega$, and
$$
 \Omega = \{x \in \overline{\Omega}: \rho(x)<0\},\qquad\partial \Omega = \{x \in \overline{\Omega}: \rho(x)=0\}.
$$
For $\eps>0,$ let $\Omega^\eps $ be the $\eps$-neighborhood of $\Omega$. It is easy to verify that the mollification (see Section \ref{nota} for notation)   $$\rho_\eps: \Omega^\eps \to [-1,0], \qquad \rho_\eps:= \rho *\phi^2_{\eps}  $$ is  smooth and convex, and moreover that the Lipschitz constants of $\{\rho_\eps: \eps>0\}$ are uniformly bounded by $L_\Omega$. Finally, we have that, for $\varepsilon\rightarrow 0, \rho_\eps \to \rho$ in the uniform Lipschitz norm, that is $$
 \sup_{x,y \in \Omega} \frac{|\rho_\eps(x) -\rho(x)-\rho_\eps(y)+\rho(y)|}{|x-y|} \to 0, \qquad \eps \to 0.
$$
Choose a subsequence $n_k$ with $$
 \sup_{x \in \Omega}\big|\rho_{\frac{1}{n_k}}(x)- \rho_{\frac{1}{n_{k+1}}}(x)\big|< (n_k)^{-3},$$
and define
$$
\Omega_{k}:= \Big\{ x \in \Omega: \rho_{\frac{1}{n_k}}(x)<-\textstyle\frac{1}{n_k}\Big\};
$$
 it follows that $\Omega_k$ is a convex open set, $\Omega_{k} \Subset \Omega_{k+1} \Subset \Omega$, and that  $\bigcup\Omega_{k}=\Omega$. Moreover the $\partial \Omega_k$ are smooth, and uniformly Lipschitz (with respect to $k$). Thus each $\Omega_k$ has the strong local Lipschitz property with Lipschitz constant uniformly bounded in $k$. The construction \eqref{gammacv2} is thus completed.

 Several important consequences of \eqref{gammacv}-\eqref{gammacv2} will be derived in the next subsections. Here, we mention that \eqref{gammacv2} guarantees that the implicit constants appearing in the Sobolev embeddings and trace theorems on  $\Omega_n$, which depend on the constants in the strong local Lipschitz property (see Theorem IV.4.1 and its proof in \cite{Adams} for instance) are uniform in $n$ (they do depend on $\Omega$ however,  through $M_\Omega, \delta_\Omega, N_\Omega$ and $L_\Omega$).

\subsection{Normal vector, normal traces, tangential vector fields}   Maintaining the notation of Proposition \ref{cvx1}, we observe that for each $i$,
$
\beta_i'(y_1)
$ is defined almost everywhere on $(-\delta_\Omega,\delta_\Omega)$ and $|\beta_i'(y_1)|\leq M_\delta$ wherever defined. We can thus define the normal vector almost everywhere on $Q_i\cap \partial \Omega$ by
$$
\cic{n}(x)=\cic{n}(y_1,\beta_i(y_1)) := \frac{(-\beta'_i(y_1),1) }{\sqrt{1+( \beta'_i(y_1))^2}}, \qquad x= (y_1,\beta_i(y_1)) \in  Q_i\cap \partial \Omega.
$$
This definition can be extended to all of $Q_i$ by $\cic{n}(y_1,y_2)= \cic{n}(y_1,\beta_i(y_1))$, and using a partition of unity subordinated to the covering of $\overline\Omega$ by the $Q_i,$  to a bounded vector field on all of $\overline{\Omega}$.

Thus, for $\v \in \D(\overline\Omega; \R^2), $
$$
\gamma_{\mathbf{n}} \v (x) := \v(x) \cdot \mathbf{n} (x), \quad \gamma_{\mathbf{t}} \v (x) := \v^\perp(x) \cdot \mathbf{n} (x) , \qquad x \in \partial \Omega
$$
is defined almost everywhere on $\partial\Omega$.  We are interested in the spaces
$$
\cic{L}_{\div}^p(\Omega) = \{\v\in \cic{L}^p(\Omega) : \div \v \in L^p(\Omega)\}, \qquad 1\leq q\leq \infty.
$$
The classical $W^{1,p}(\Omega)\to W^{1-\frac1q,q}(\partial\Omega) $ trace theorem due to Gagliardo \cite{Gagl} yields the lemma below, arguing along the same lines of \cite[Theorem I.1.2]{TEM2}. For a definition of Sobolev spaces (of fractional order) on Lipschitz submanifolds of $\R^d$, see Subsection 1.3.3 in \cite{Grisvard}. Note that if   $f \in \D(\overline\Omega)$ (that is, $f$ is restriction to $\overline\Omega$ of a function in $\C^\infty(\R^2)$), then the restriction of $f$ to $\partial \Omega$ is a Lipschitz function on the Lipschitz submanifold $\partial \Omega$.
\begin{lemma}  \label{TF-L2} Let $\Omega\subset \R^2$ be a bounded convex domain and $p \in (1,\infty)$.
The map  $  \gamma_{\mathbf{n}}: \v \in \D(\overline\Omega; \R^2) \to \mathbf{\gamma}_{\mathbf{n}}\mathbf{v} \in\mathrm{Lip}(\partial \Omega)  $ extends  as  a linear   bounded map
$$
  \gamma_{\mathbf{n}} : \cic{L}_{\div}^p(\Omega)   \to W^{-\frac{1}{p'},p'} (\partial \Omega)
$$
and the generalized Stokes formula
\begin{equation} \label{TF-stokes}
 (\v, \nabla\varphi)_{\Omega} + (\div  \v, \varphi)_{\Omega} = \l \gamma_{\mathbf{n}} \v , \gamma_0\varphi\r
\end{equation}
holds for every $\v \in  \cic{L}_{\div}^p(\Omega)$ and $\varphi \in W^{1,p'}(\Omega)$.
\end{lemma}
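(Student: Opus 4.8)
The plan is to follow the duality/lifting scheme of \cite[Theorem I.1.2]{TEM2}, adapting the Hilbert case $p=2$ to general $1<p<\infty$. First I would record the classical Green formula for \emph{smooth} data. Since $\partial\Omega$ is Lipschitz (Proposition \ref{cvx1}) and $\cic{n}$ is defined almost everywhere, the divergence theorem applies to $\v\in\D(\overline\Omega;\R^2)$ and $\varphi\in\D(\overline\Omega)$ and yields
\begin{equation*}
(\v,\nabla\varphi)_\Omega+(\div\v,\varphi)_\Omega=\int_{\partial\Omega}(\v\cdot\cic{n})\,\gamma_0\varphi\,\d\sigma=\l\gamma_{\cic{n}}\v,\gamma_0\varphi\r .
\end{equation*}
In particular, for smooth $\v$ the boundary term is the honest pairing against $\gamma_{\cic{n}}\v=\v\cdot\cic{n}\in\mathrm{Lip}(\partial\Omega)$, so any extension built below will be consistent with the classical normal trace.

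Next I would set up the extension by duality against traces. By Gagliardo's trace theorem, $\gamma_0$ maps $W^{1,p'}(\Omega)$ boundedly \emph{onto} its trace space $\gamma_0\big(W^{1,p'}(\Omega)\big)=W^{1-\frac1{p'},p'}(\partial\Omega)$ and admits a bounded right inverse (lifting) $R$. For $\v\in\cic{L}^p_{\div}(\Omega)$ and $g$ in this trace space I would then \emph{define}
\begin{equation*}
\l\gamma_{\cic{n}}\v,g\r:=(\v,\nabla Rg)_\Omega+(\div\v,Rg)_\Omega .
\end{equation*}
By H\"older, $|(\v,\nabla Rg)_\Omega|\le\|\v\|_{L^p}\|\nabla Rg\|_{L^{p'}}$ and $|(\div\v,Rg)_\Omega|\le\|\div\v\|_{L^p}\|Rg\|_{L^{p'}}$, so boundedness of $R$ gives $|\l\gamma_{\cic{n}}\v,g\r|\lesssim(\|\v\|_{L^p}+\|\div\v\|_{L^p})\,\|g\|_{W^{1-\frac1{p'},p'}(\partial\Omega)}$. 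Thus $\gamma_{\cic{n}}\v$ is a bounded linear functional on the trace space, i.e.\ an element of its dual — the negative-order Sobolev space on $\partial\Omega$ required by the lemma — with $\|\gamma_{\cic{n}}\v\|\lesssim\|\v\|_{\cic{L}^p_{\div}}$.

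The step requiring care is \emph{well-posedness} of this definition, i.e.\ independence of the chosen lifting; equivalently, that $\varphi\mapsto(\v,\nabla\varphi)_\Omega+(\div\v,\varphi)_\Omega$ annihilates $W^{1,p'}_0(\Omega)$. I would prove this by density: for $\psi\in\D(\Omega)$ the identity $(\v,\nabla\psi)_\Omega+(\div\v,\psi)_\Omega=0$ is exactly the definition of the distributional divergence of $\v$, and since $\D(\Omega)$ is dense in $W^{1,p'}_0(\Omega)$ I would pass to the limit using $\v,\div\v\in L^p(\Omega)$ paired against $\nabla\psi_k\to\nabla\varphi$ and $\psi_k\to\varphi$ in $L^{p'}(\Omega)$. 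With this in hand, the displayed Green formula holds for every $\varphi\in W^{1,p'}(\Omega)$ by construction (take the lifting to be $\varphi$ itself), and the first paragraph shows the map agrees with $\v\cdot\cic{n}$ on $\D(\overline\Omega;\R^2)$, so $\gamma_{\cic{n}}$ is a genuine extension.

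The main obstacle I anticipate is not the algebra above but the underlying \emph{function-space theory on the Lipschitz curve} $\partial\Omega$: one must invoke the definition of fractional-order Sobolev spaces on a Lipschitz submanifold (\cite[\S1.3.3]{Grisvard}), the surjectivity of $\gamma_0$ together with a bounded lifting $R$, and the identification of the dual of $W^{s,p'}(\partial\Omega)$ as the corresponding negative-order space. These are precisely the points where the \emph{uniform} Lipschitz character furnished by Proposition \ref{cvx1} and Subsection \ref{approxdom} is used, and where I would lean on \cite{Gagl,Grisvard}; the remainder is the H\"older/density bookkeeping indicated above.
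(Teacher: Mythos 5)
Your proposal is correct and follows essentially the same route as the paper: the paper offers no detailed proof, simply invoking Gagliardo's trace theorem and noting the argument runs along the same lines as \cite[Theorem I.1.2]{TEM2}, which is exactly the duality/lifting scheme you spell out (bounded right inverse $R$ of $\gamma_0$ on $W^{1,p'}(\Omega)$, H\"older bounds, well-posedness via density of $\D(\Omega)$ in $W^{1,p'}_0(\Omega)$, and consistency with the classical normal trace on $\D(\overline\Omega;\R^2)$). Your hedge about the exact label of the negative-order target is also apt, since what your construction produces is an element of the dual of the trace space $\gamma_0\big(W^{1,p'}(\Omega)\big)$, which is precisely what the paper's notation $W^{-\frac{1}{p'},p'}(\partial\Omega)$ is meant to denote.
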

\begin{remark} If $\Omega_n$ is an approximating sequence of domains as in Subsection \ref{approxdom}, the norms of  the $\gamma_{\mathbf{n}}      \in \mathcal{L}( \cic{L}_{\div}^p(\Omega)    \to W^{-1/p',p'} (\partial \Omega_n))$ are uniformly bounded in $n$. This uniformity descends from the uniformity of the constants in the Gagliardo trace theorem, \cite{Gagl}.\end{remark}
\subsubsection{$\cic{L}^p$-tangential vector fields} We say that $\v\in \cic{L}^1(\Omega)$ is a tangential divergence free vector field if
\begin{equation} \label{TVF}
 (\v, \nabla\varphi)_{\Omega} =0 \qquad \forall \varphi \in \D(\overline{\Omega}).
\end{equation}
As a consequence of Lemma \ref{TF-L2}, it follows that
\begin{equation} \label{TVFlemma}
\cic{L}_{\tau}^p(\Omega):=\{\v \in \cic{L}_\div^p(\Omega): \eqref{TVF} \textrm{ holds}\}= \{\v \in \cic{L}_{\div}^p(\Omega): \div \v=0, \gamma_\cic{n} \v =0\}.
\end{equation}

\subsection{Helmholtz decomposition} Let $\V:=\{ \v \in \D(\Omega; \R^2) : \div \v=0  \}$. It is well known (see for example Theorem I.1.4 in \cite{TEM2})  that 
for $\Omega$ Lipschitz
\begin{equation} \label{helm-l2}
\cic{L}_{\tau}^2(\Omega)= \textrm{ the closure of } \V  \textrm{ in } \cic{L}^2(\Omega), \qquad \cic{L}_{\tau}^2(\Omega)^\perp = \{\nabla \pi : \pi \in H^1(\Omega)\}.
\end{equation}
Let us denote by $P_\Omega: \cic{L}^2(\Omega) \to \cic{L}_{\tau}^2(\Omega)$ the corresponding orthogonal projector. The following result, 
due to Geng and Shen \cite{Geng-Shen} allows us to obtain  \eqref{helm-l2} for all $1<p<\infty$ and extend $P_\Omega$ boundedly to $\cic{L}^p(\Omega)$. 
Note that Theorem I.1.4 in \cite{TEM2} (i.e.\ the case $p=2$ of   Theorem \ref{helmthm} below) holds whenever $\Omega$ is Lipschitz; actually, the range $p\in \big(\frac32-\eps,3+\eps\big)$ is known to be sharp for general Lipschitz domains   \cite{FMM}.
\begin{theorem} Let $\Omega \subset \R^2$ be a bounded convex domain and $1<p<\infty$. The following hold:  \label{helmthm}
\begin{equation} \label{helm-lq}
\cic{L}_{\tau}^p(\Omega)= \textrm{ \emph{the closure of} } \V  \textrm{ in } \cic{L}^p(\Omega);  \end{equation}
$P_\Omega$ extends to a bounded linear operator $$P_\Omega :\cic{L} ^p(\Omega) \to \cic{L}_{\tau}^p(\Omega), 
\qquad   P_\Omega|_{\cic{L}_{\tau}^p(\Omega)} =\mathrm{I}_{\cic{L}_{\tau}^p(\Omega)}, $$  with operator norm only depending on 
$M_\Omega,$  $\delta_\Omega,$ $N_\Omega,$ $L_\Omega$. Moreover,
for each $\v \in \cic{L}^p(\Omega)$, there exists $ \pi \in W^{1,p}(\Omega)$, unique up to an additive constant such that
\begin{equation}
P_\Omega^\perp \v:=(I-P_\Omega) \v= \nabla \pi.
\label{iminusp}
\end{equation}
\end{theorem}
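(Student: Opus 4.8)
The plan is to reduce the entire statement to the $\cic{L}^p$ Helmholtz decomposition in convex domains established by Geng and Shen \cite{Geng-Shen}, and then to reconcile their decomposition with the function spaces and the weak normal trace introduced above. Their theorem provides, for every $\v\in\cic{L}^p(\Omega)$ with $1<p<\infty$, a unique decomposition $\v=\cic{w}+\nabla\pi$ in which $\cic{w}$ is divergence free with vanishing normal component on $\partial\Omega$, $\pi\in W^{1,p}(\Omega)$ is unique up to an additive constant, and $\|\cic{w}\|_{\cic{L}^p(\Omega)}+\|\nabla\pi\|_{\cic{L}^p(\Omega)}\leq C\|\v\|_{\cic{L}^p(\Omega)}$; crucially, for convex domains the constant $C$ depends only on the diameter and the Lipschitz character, hence only on $M_\Omega,\delta_\Omega,N_\Omega,L_\Omega$ in our notation. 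Setting $P_\Omega\v:=\cic{w}$ and $(I-P_\Omega)\v:=\nabla\pi$ immediately yields \eqref{iminusp} together with the asserted boundedness and the uniformity of the operator norm.

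First I would check that this decomposition is compatible with the spaces defined here. The component $\cic{w}$ is divergence free in the distributional sense and, by Lemma \ref{TF-L2}, its normal trace $\gamma_\cic{n}\cic{w}$ coincides with the boundary value of the normal component used by Geng and Shen; hence $\cic{w}\in\cic{L}_\tau^p(\Omega)$ in the sense of \eqref{TVFlemma}. When $p=2$ the uniqueness of the decomposition forces $P_\Omega$ to agree with the orthogonal projector of \eqref{helm-l2}, so the notation is consistent. Moreover, if $\v\in\cic{L}_\tau^p(\Omega)$, then $\v=\v+\nabla 0$ is a valid decomposition, and uniqueness gives $\nabla\pi=0$, that is $P_\Omega|_{\cic{L}_\tau^p(\Omega)}=\mathrm{I}$.

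It remains to prove the density statement \eqref{helm-lq}. One inclusion is elementary: every element of $\V$ is smooth, compactly supported and divergence free, hence satisfies \eqref{TVF}; since the defining relation \eqref{TVF} pairs $\v$ only against the fixed test functions $\nabla\varphi\in\cic{L}^{p'}(\Omega)$, it is stable under $\cic{L}^p$-limits, and it already forces $\div\v=0$, so by \eqref{TVFlemma} the closure of $\V$ is contained in $\cic{L}_\tau^p(\Omega)$. For the reverse inclusion I would argue by duality: if some $\cic{g}\in\cic{L}^{p'}(\Omega)$ annihilates $\V$, then by the de Rham theorem $\cic{g}=\nabla\psi$ for some $\psi\in W^{1,p'}(\Omega)$ (this is where convexity, through the Geng--Shen decomposition applied with exponent $p'$, guarantees $\psi\in W^{1,p'}$ rather than merely a distribution); but then for any $\v\in\cic{L}_\tau^p(\Omega)$ the generalized Stokes formula \eqref{TF-stokes}, combined with $\div\v=0$ and $\gamma_\cic{n}\v=0$, gives $(\cic{g},\v)_\Omega=(\v,\nabla\psi)_\Omega=0$. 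Thus every functional vanishing on $\V$ also vanishes on $\cic{L}_\tau^p(\Omega)$, and by Hahn--Banach the closure of $\V$ equals $\cic{L}_\tau^p(\Omega)$, completing \eqref{helm-lq}.

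The main obstacle is in fact already resolved by the citation: the genuinely hard analytic fact, namely the $\cic{L}^p$-boundedness of the Leray projection on a convex domain with constants controlled only by the geometric data, is the content of \cite{Geng-Shen} and rests on delicate estimates for the Neumann problem, which I do not reprove. The remaining work on my side is bookkeeping: matching Geng and Shen's boundary condition with the weak normal trace $\gamma_\cic{n}$ of Lemma \ref{TF-L2}, verifying that their constant depends only on $M_\Omega,\delta_\Omega,N_\Omega,L_\Omega$ so that it is uniform along the approximating sequence $\Omega_n$, and carrying out the standard de Rham/duality argument for the density of $\V$.
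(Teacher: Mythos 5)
Your proposal is correct and takes essentially the same route as the paper: the hard analytic content (the bounded extension of $P_\Omega$ and the gradient characterization \eqref{iminusp}, with constants controlled by the Lipschitz data) is quoted from Geng--Shen, and the density \eqref{helm-lq} is then obtained by the same annihilator argument via de Rham and Hahn--Banach. The only cosmetic differences are that the paper upgrades the de Rham potential to $W^{1,p'}(\Omega)$ via Deny--Lions plus Poincar\'e rather than re-invoking Geng--Shen at the dual exponent, and kills the annihilating gradient through $P_\Omega\nabla\pi=0$ together with the self-duality of $P_\Omega$, where you instead use the generalized Stokes formula \eqref{TF-stokes}; both steps are interchangeable.
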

\begin{proof} The second and third statements appear almost verbatim in \cite[Theorem 1.3]{Geng-Shen}. 
Let us show how they imply \eqref{helm-lq}; denote by $\cic{H}_\tau^p$ the space on the right hand side of \eqref{helm-lq}. The backward inclusion is easy 
(see the proof of \cite[Theorem I.1.4]{TEM2} for example). To get equality, we begin by characterizing the annihilator of 
$ \cic{H}_\tau^p$ in $\cic{L}_\tau^p(\Omega)$
as
\begin{equation}
\big(\cic{H}_\tau^p\big)^\perp:= \big\{F \in \mathcal{L}(\cic{L}_\tau^p(\Omega), \R): F|_{\cic{H}_\tau^p}=0\big\} = \nabla W^{1,p'}(\Omega).
\label{anni}
 \end{equation}
\begin{proof}[Proof of \eqref{anni}] It is known (see \cite{Xwang} for a simple proof) that $$
F\in \D'(\Omega;\R^2), \qquad F(\v)= 0 \quad \forall \v \in \V \implies F= \nabla \pi, \; \pi \in\D'(\Omega).$$
Now from the Riesz representation theorem, for each $F   \in \mathcal{L}(\cic{L}_\tau^p(\Omega), \R) $ there exists a (possibly nonunique) ${\f} \in \cic{L}^{p'}(\Omega)$ such that $F(\v)=({\f},\v)_\Omega$ for all $\v \in  \cic{L}_\tau^p(\Omega)$. From the above characterization, it follows that ${\f}= \nabla \pi.$ {The Deny-Lions characterization (see for example \cite[Proposition I.1.1]{TEM2}) gives that
$ \pi \in L^2(\Omega)$; since $\nabla \pi \in \cic{L}^{p'}(\Omega)$, the   Poincar\'e inequality finally yields $  \pi \in {W}^{1,p'}(\Omega)$}.
\end{proof} \noindent
With \eqref{anni} in hand, we show that $\big(\cic{H}_\tau^p\big)^\perp=\{\cic{0}\}$, thus proving \eqref{helm-lq}. Let $F \in \big(\cic{H}_\tau^p\big)^\perp$, and ${\f} \in \cic{L}^{p'}(\Omega) $ as above. By \eqref{iminusp}, it follows that ${\f}= (I-P_\Omega) {\f}$, i.e.\ $P_\Omega {\f}=0$. Therefore
$$
({\f},\v)_\Omega =  ({\f},P_
\Omega \v)_\Omega = (P_
\Omega {\f}, \v)_\Omega =0 \qquad \forall\, \v \in  \cic{L}_\tau^p(\Omega),
$$
and this proves the last claim. Here we used the same notation (with slight abuse) for both $
 P_\Omega : \cic{L}^p(\Omega) \to \cic{L}^p(\Omega)$ and $ P_\Omega : \cic{L}^{p'}(\Omega) \to \cic{L}^{p'}(\Omega) .$
\end{proof}
\subsection{The Dirichlet problem and the Biot-Savart law}
Denote by
$$
A_\Omega= -\Delta \textrm{ on } \Omega \textrm{ with Dirichlet boundary conditions}, \qquad \mathrm{dom}(A_\Omega)= H^2(\Omega)\cap H^1_0(\Omega).$$
We recall the following consequence of the Lax-Milgram lemma.
\begin{proposition} \label{ellprob} Let  $f \in H^{-1}(\Omega)$ be given. Then there exists a unique $F \in H^1_0(\Omega)$ satisfying
\begin{equation} \label{laxmilgram}
A_\Omega F = f, \qquad \|F\|_{H_0^1(\Omega)} \leq \|f\|_{H^{-1}(\Omega)}.
\end{equation}
\end{proposition}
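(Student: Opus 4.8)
The plan is to recast \eqref{laxmilgram} in weak form and invoke the Lax--Milgram lemma, using the boundedness of $\Omega$ to secure coercivity via the Poincar\'e inequality. Concretely, I would observe that $F \in H^1_0(\Omega)$ solves $A_\Omega F = f$ precisely when
$$ a(F,\varphi):=(\nabla F, \nabla \varphi)_\Omega = \l f, \varphi\r \qquad \forall \varphi \in H^1_0(\Omega), $$
where $\l\cdot,\cdot\r$ denotes the $H^{-1}$--$H^1_0$ duality pairing. This is the standard weak reformulation: for $F \in H^2(\Omega)\cap H^1_0(\Omega)$ an integration by parts identifies $(\nabla F,\nabla\varphi)_\Omega$ with $(-\Delta F, \varphi)_\Omega$, the boundary term vanishing since $\varphi\in H^1_0(\Omega)$.

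First I would equip $H^1_0(\Omega)$ with the inner product $a(\cdot,\cdot)$, that is, with the norm $\|u\|_{H^1_0(\Omega)} = \|\nabla u\|_{L^2(\Omega)}$. Because $\Omega$ is bounded, the Poincar\'e inequality guarantees that this norm is equivalent to the full $H^1(\Omega)$ norm on $H^1_0(\Omega)$, so that $\big(H^1_0(\Omega), a\big)$ is a Hilbert space. With this choice $a$ is symmetric, bounded (with constant $1$, by Cauchy--Schwarz) and coercive (with constant $1$, since $a(u,u)=\|u\|^2_{H^1_0(\Omega)}$), so the hypotheses of Lax--Milgram are met. In fact, since $a$ is precisely the inner product, existence and uniqueness of $F$ follow at once from the Riesz representation theorem applied to the bounded linear functional $\varphi \mapsto \l f,\varphi\r$ on $H^1_0(\Omega)$.

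It remains to derive the quantitative bound. Testing the weak formulation against $\varphi = F$ gives
$$ \|F\|^2_{H^1_0(\Omega)} = a(F,F) = \l f, F\r \leq \|f\|_{H^{-1}(\Omega)}\,\|F\|_{H^1_0(\Omega)}, $$
and dividing by $\|F\|_{H^1_0(\Omega)}$ (the case $F=0$ being trivial) yields $\|F\|_{H^1_0(\Omega)} \leq \|f\|_{H^{-1}(\Omega)}$, exactly as claimed. Uniqueness follows by the same token: if two solutions exist, their difference $G$ satisfies $a(G,\varphi)=0$ for all $\varphi$, and the choice $\varphi=G$ forces $\|G\|_{H^1_0(\Omega)}=0$. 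There is no substantive obstacle here, the result being entirely standard; the only point requiring a word of care is that $\|f\|_{H^{-1}(\Omega)}$ must be read as the dual norm with respect to $\|\cdot\|_{H^1_0(\Omega)}=\|\nabla\cdot\|_{L^2(\Omega)}$, which is precisely the normalization that produces the sharp constant $1$ in the estimate.
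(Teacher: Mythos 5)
Your proposal is correct and coincides with the paper's approach: the paper states Proposition \ref{ellprob} as an immediate consequence of the Lax--Milgram lemma, and your argument — weak formulation, Poincar\'e-based norm equivalence, Riesz representation, and testing with $\varphi=F$ for the constant-$1$ bound — is precisely that standard proof spelled out. Your closing caveat about reading $\|f\|_{H^{-1}(\Omega)}$ as the dual norm relative to $\|\nabla\cdot\|_{L^2(\Omega)}$ is also the normalization the paper implicitly uses (cf.\ the identity $\|\mathrm{G}_\Omega f\|_{H^1_0(\Omega)}^2=(\mathrm{G}_\Omega f,f)_\Omega$ in the proof of Lemma \ref{lemmaconvcvx}).
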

Referring to \eqref{laxmilgram}, we   use the notation $ F={\mathrm{G}}_\Omega f$. The classical theory of elliptic equations (see for example \cite{Gilbarg}) 
tells us that, when $\Omega$ is a bounded smooth domain, and  $f \in L^p(\Omega)$,  ${\mathrm{G}}_\Omega f$ has two derivatives in $L^p(\Omega)$ 
for any $1<p<\infty$. This is no longer true in general if the domain $\Omega$ is merely bounded and convex. However, the above result still holds in the 
range $1<p\leq2$: we state this precisely in the theorem below, due to Fromm \cite{Fromm}.
\begin{theorem}
\label{ercvx} Let $\Omega\subset \R^2$ be a bounded convex domain.
Let  $ 1<p\leq 2$ and  $f\in L^p(\Omega)$ be given.  Then, there exists a positive constant $C_{p,\Omega}$, depending only on $p$ and on the Lipschitz 
character of  $\Omega$, such that
\begin{equation}
\label{epest1}
\mathrm{G}_\Omega f \in W^{2,p}(\Omega) \cap W^{1,p}_0(\Omega), \qquad
 \|{{\mathrm{G}}}_{\Omega}f\|_{W^{2,p}(\Omega)} \leq C_{p,\Omega} \|f\|_{L^p(\Omega)}.
\end{equation}
\end{theorem}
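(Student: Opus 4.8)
The plan is to reduce, via the approximation of Subsection \ref{approxdom}, to a \emph{uniform} a priori estimate on the smooth convex subdomains $\Omega_n$, and then to split the argument according to whether $p=2$ or $1<p<2$. Note first that in two dimensions $L^p(\Omega)\hookrightarrow H^{-1}(\Omega)$ for every $p>1$, since $H^1_0(\Omega)\hookrightarrow L^{p'}(\Omega)$; hence $\mathrm{G}_\Omega f$ is well defined by Proposition \ref{ellprob}. I would set $u_n := \mathrm{G}_{\Omega_n}(f|_{\Omega_n})$ and aim to prove
\begin{equation}
\|u_n\|_{W^{2,p}(\Omega_n)} \leq C_{p,\Omega}\,\|f\|_{L^p(\Omega)}, \label{apriori}
\end{equation}
with $C_{p,\Omega}$ depending only on $p$ and on $M_\Omega,\delta_\Omega,N_\Omega,L_\Omega$. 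Extending each $u_n$ by zero outside $\Omega_n$, the bound \eqref{apriori} yields a subsequence converging weakly in $W^{2,p}$ on compact subsets of $\Omega$; passing to the limit in the weak formulation $(\nabla u_n,\nabla\varphi)=(f,\varphi)$ for $\varphi\in\D(\Omega)$ identifies the limit as $\mathrm{G}_\Omega f$, the homogeneous Dirichlet condition surviving because each $u_n$ vanishes outside $\Omega_n$, and weak lower semicontinuity of the norm then gives \eqref{epest1}. The whole difficulty is thus concentrated in \eqref{apriori}, with constants uniform in $n$; from here I work on a fixed smooth convex domain and drop the subscript.

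The case $p=2$ is classical and is exactly where convexity enters, through a Rellich--Ne\v{c}as integration by parts. For $u\in H^2(\Omega)\cap H^1_0(\Omega)$, the vanishing of $u$ on $\partial\Omega$ forces $\nabla u=(\partial u/\partial\mathbf{n})\,\mathbf{n}$ there, and one obtains
\begin{equation}
\int_\Omega |D^2 u|^2 = \int_\Omega |\Delta u|^2 - \int_{\partial\Omega} \kappa\, \Big|\tfrac{\partial u}{\partial \mathbf{n}}\Big|^2 \,\d\sigma, \label{rellich}
\end{equation}
where $\kappa$ is the boundary curvature, which is \emph{nonnegative} by convexity. Discarding the favorable boundary term gives $\|D^2u\|_{L^2}\leq\|\Delta u\|_{L^2}=\|f\|_{L^2}$, i.e.\ \eqref{apriori} for $p=2$ (cf.\ \cite[Thm.\ 3.1.1.1]{Grisvard}); the loss of the favorable sign at nonconvex corners is precisely what makes this method specific to convex $\Omega$.

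For $1<p<2$ I would realize $Tf:=D^2\mathrm{G}_\Omega f$ as a Calder\'on--Zygmund operator and bootstrap downward from the $L^2$ bound just obtained. Its kernel is $K(x,y)=D^2_x G_\Omega(x,y)$, a standard kernel of size $|x-y|^{-2}$ in the interior by local elliptic regularity. The key is to establish the H\"ormander smoothness condition
\begin{equation}
\sup_{y,y'} \int_{|x-y|>2|y-y'|} |K(x,y)-K(x,y')|\,\d x < \infty \label{hormander}
\end{equation}
uniformly over convex domains of fixed Lipschitz character, which requires pointwise control of $G_\Omega$ and its derivatives up to the boundary. Granting \eqref{hormander}, the $L^2$ boundedness together with Calder\'on--Zygmund theory makes $T$ of weak type $(1,1)$, and Marcinkiewicz interpolation with the $L^2$ bound yields boundedness on $L^p$ for every $1<p<2$, completing \eqref{apriori}. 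The range is sharp: near a corner of aperture $\alpha\in(\tfrac\pi2,\pi)$ the second derivatives of $u$ behave like $r^{\pi/\alpha-2}$, which lies in $L^p$ only for $p<2\alpha/(2\alpha-\pi)$, a threshold decreasing to $2$ as $\alpha\to\pi$; since a nearly flat corner has excellent Lipschitz character, this forces the uniform restriction $1<p\leq2$ over the whole class and signals that the dual H\"ormander condition, which would furnish $p>2$, must fail.

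The hard part will be establishing the boundary kernel estimates underlying \eqref{hormander}, with constants governed purely by the Lipschitz character, so that they persist through the approximation $\Omega_n\nearrow\Omega$; this technical core is carried out in \cite{Fromm}. A secondary but genuine point is verifying that the weak $W^{2,p}$ limit of the $u_n$ retains the homogeneous boundary trace, for which the zero extensions and the uniform Lipschitz character of the $\Omega_n$ are exactly what is needed.
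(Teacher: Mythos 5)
The paper offers no proof of Theorem \ref{ercvx}: it imports the statement wholesale from Fromm \cite{Fromm}, and since your argument likewise delegates its entire technical core --- the boundary kernel estimates underlying the H\"ormander condition and the resulting weak $(1,1)$ bound --- to that same reference, you are in substance taking the same route as the paper. Your surrounding scaffolding (reduction to a uniform a priori bound on the smooth convex approximants of Subsection \ref{approxdom}, the Rellich identity with curvature $\kappa\geq 0$ at $p=2$ as in Grisvard \cite{Grisvard}, downward Marcinkiewicz interpolation for $1<p<2$, and the corner computation showing why the range $1<p\leq 2$ is sharp uniformly over the class of convex domains) is an accurate reconstruction of how this result is actually proved in the literature.
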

Let $\Omega_n$ be the  approximating sequence to $\Omega$ constructed in \eqref{gammacv}. In this context, for a function $F :\Omega_n \to \R$, 
we denote by $\widetilde F$ its extension by zero to $\Omega$. Note that 
\begin{equation} \label{extensionW}
  F \in W^{1,p}_0(\Omega_n) \implies \widetilde F \in W^{1,p}_0(\Omega )
\quad \textrm{ and } \quad\|\widetilde F\|_{W^{1,p}_0(\Omega )} = \| F\|_{W^{1,p}_0(\Omega_n )}.
\end{equation}  
We state and prove a so-called 
$\Gamma$-convergence type result. The restrictions $f_n,f \in L^1_{\mathrm{loc}}(\Omega)$ can be lifted, but we only need the particular case contained 
in the lemma below. 
\begin{lemma}\label{lemmaconvcvx} Let $f_n,f \in L^1_{\mathrm{loc}}(\Omega)$ and set $(f_n)_{\Omega_n} =f_n \cic{1}_{\Omega_n}. $ Then
\begin{equation} \label{convH-1} f_n \to f\quad  \textrm{\emph{in}} \; H^{-1}(\Omega) \implies
\widetilde{\mathrm{G}_{\Omega_n} (f_n)_{\Omega_n}} \to \mathrm{G}_{\Omega}  f\quad  \textrm{\emph{in}} \;    H^1_0(\Omega).
\end{equation}
\end{lemma}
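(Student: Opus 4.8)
The plan is to run the standard three-step scheme for a Dirichlet problem on an exhausting sequence of subdomains: extract a weak limit from uniform energy bounds, identify the weak limit as $\mathrm{G}_\Omega f$ by testing against compactly supported functions, and finally promote weak convergence to strong convergence via convergence of the Dirichlet energies. Throughout I would write $F_n := \mathrm{G}_{\Omega_n}(f_n)_{\Omega_n} \in H^1_0(\Omega_n)$ and $F := \mathrm{G}_\Omega f \in H^1_0(\Omega)$, so that the assertion \eqref{convH-1} is precisely $\widetilde{F_n} \to F$ in $H^1_0(\Omega)$.

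First I would establish a uniform bound on $\|\widetilde{F_n}\|_{H^1_0(\Omega)}$. The crucial observation is that, by \eqref{extensionW} with $p=2$, extension by zero is an isometry $H^1_0(\Omega_n) \to H^1_0(\Omega)$; dualizing, for every $\psi \in H^1_0(\Omega_n)$ one has $\l (f_n)_{\Omega_n}, \psi\r = \l f_n, \widetilde\psi\r$ together with $\|\widetilde\psi\|_{H^1_0(\Omega)} = \|\psi\|_{H^1_0(\Omega_n)}$, whence $\|(f_n)_{\Omega_n}\|_{H^{-1}(\Omega_n)} \leq \|f_n\|_{H^{-1}(\Omega)}$. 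Applying the Lax--Milgram estimate \eqref{laxmilgram} on $\Omega_n$ and using \eqref{extensionW} once more gives $\|\widetilde{F_n}\|_{H^1_0(\Omega)} = \|F_n\|_{H^1_0(\Omega_n)} \leq \|f_n\|_{H^{-1}(\Omega)}$. Since $f_n \to f$ in $H^{-1}(\Omega)$, the right-hand side is bounded, so a subsequence of $\{\widetilde{F_n}\}$ converges weakly in $H^1_0(\Omega)$ to some $G$.

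To identify $G$, I would test the weak formulation $\int_{\Omega_n} \nabla F_n \cdot \nabla \varphi = \l f_n, \varphi\r$ against a fixed $\varphi \in \D(\Omega)$. Since $\Omega_n$ increases to $\Omega$, for $n$ large enough $\supp \varphi \Subset \Omega_n$, so $\varphi \in H^1_0(\Omega_n)$ and the left side equals $\int_\Omega \nabla \widetilde{F_n} \cdot \nabla \varphi$. Passing to the limit along the subsequence, the left side tends to $\int_\Omega \nabla G \cdot \nabla\varphi$ by weak convergence, while the right side tends to $\l f, \varphi\r$ by the strong $H^{-1}$ convergence of $f_n$. Thus $\int_\Omega \nabla G \cdot \nabla\varphi = \l f, \varphi\r$ for all $\varphi \in \D(\Omega)$, and by density for all $\varphi \in H^1_0(\Omega)$; this is exactly the equation defining $F = \mathrm{G}_\Omega f$, so $G = F$. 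As the limit is independent of the subsequence, the whole sequence satisfies $\widetilde{F_n} \rightharpoonup F$ weakly.

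The final step, upgrading to strong convergence, is where the only genuine subtlety lies, and I would handle it by an energy argument. Testing the equation for $F_n$ against $F_n$ itself yields $\|\widetilde{F_n}\|_{H^1_0(\Omega)}^2 = \l (f_n)_{\Omega_n}, F_n\r = \l f_n, \widetilde{F_n}\r$. Here $f_n \to f$ strongly in $H^{-1}(\Omega)$ while $\widetilde{F_n} \rightharpoonup F$ weakly in $H^1_0(\Omega)$, so the pairing of a strongly convergent sequence with a bounded weakly convergent one converges to $\l f, F\r = \|F\|_{H^1_0(\Omega)}^2$. Hence $\|\widetilde{F_n}\|_{H^1_0(\Omega)} \to \|F\|_{H^1_0(\Omega)}$, and together with weak convergence in the Hilbert space $H^1_0(\Omega)$ this forces $\widetilde{F_n} \to F$ strongly, establishing \eqref{convH-1}. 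The main point requiring care is the bookkeeping between the two scales of spaces $H^{\pm 1}(\Omega_n)$ and $H^{\pm 1}(\Omega)$, which is exactly what the isometry \eqref{extensionW} is for; everything else is routine.
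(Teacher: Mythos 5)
Your proof is correct and follows essentially the same route as the paper's: weak convergence is identified by testing against $\varphi\in\D(\Omega)$ whose support eventually lies in $\Omega_n$, and strong convergence is then obtained from the same energy identity $\|\widetilde{F_n}\|_{H^1_0(\Omega)}^2=\langle f_n,\widetilde{F_n}\rangle\to\langle f, F\rangle=\|F\|_{H^1_0(\Omega)}^2$ combined with weak convergence in the Hilbert space $H^1_0(\Omega)$. Your explicit uniform bound $\|\widetilde{F_n}\|_{H^1_0(\Omega)}\leq\|f_n\|_{H^{-1}(\Omega)}$ and subsequence extraction simply make rigorous a step the paper leaves implicit in its density argument, so the two proofs are the same in substance.
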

\begin{proof}   We first show that $\widetilde{\mathrm{G}_{\Omega_n} (f_n)_{\Omega_n}} \to \mathrm{G}_{\Omega}  f$ weakly in $H^1_0(\Omega)$.  By density of $ \varphi \in\D(\Omega)$  in $H^1_0(\Omega) $  it suffices to show that
$$
\lim_{n \to \infty}\big(\nabla\widetilde{\mathrm{G}_{\Omega_n} (f_n)_{\Omega_n}} -\nabla \mathrm{G}_{\Omega}  f , \nabla\varphi\big)_\Omega  =0
 \qquad \forall \varphi \in\D(\Omega).$$
Fix $ \varphi \in\D(\Omega)$ and choose $N$  such that $\supp \varphi:= D \Subset \Omega_N$. Then for $n \geq N, \supp \varphi\subset\Omega_n$ and 
\begin{align*}
\big(\nabla\widetilde{\mathrm{G}_{\Omega_n} (f_n)_{\Omega_n}}  , \nabla\varphi\big)_\Omega 
& = \big(\nabla\widetilde{\mathrm{G}_{\Omega_n} (f_n)_{\Omega_n}}  , \nabla\varphi\big)_{\Omega_n} = 
\big(\nabla{\mathrm{G}_{\Omega_n} (f_n)_{\Omega_n}}  , \nabla\varphi\big)_{\Omega_n}\\ 
&= -   (\Delta \mathrm{G}_{\Omega_n} (f_n)_{\Omega_n}, \varphi )_{\Omega_n}\\ 
& = -(f_n, \varphi)_{\Omega_n}= -(f_n, \varphi)_\Omega, 
\end{align*}
and similarly $(\nabla \mathrm{G}_{\Omega}  f , \nabla\varphi\big)_\Omega=-(f , \varphi)_\Omega $. Thus
$$\big(\nabla\widetilde{\mathrm{G}_{\Omega_n} (f_n)_{\Omega_n}} -\nabla 
\mathrm{G}_{\Omega}  f , \nabla\varphi\big)_\Omega  =  ( f_n-f, \varphi )_\Omega = \l f_n-f, \varphi \r \to 0, $$
and the weak convergence follows. Moreover, we also see that \begin{align*}
\|\widetilde{\mathrm{G}_{\Omega_n} (f_n)_{\Omega_n}} \|_{H^1_0(\Omega)}^2  & =  \big( \widetilde{\mathrm{G}_{\Omega_n} (f_n)_{\Omega_n}}, f_n\big)_\Omega = \big( \widetilde{\mathrm{G}_{\Omega_n} (f_n)_{\Omega_n}}, f_n-f \big)_\Omega +  \big( \widetilde{\mathrm{G}_{\Omega_n} (f_n)_{\Omega_n}},  f \big)_\Omega \\ &   \to 0+  \big( \mathrm{G}_{\Omega } f, f \big)_\Omega = \| \mathrm{G}_{\Omega } f\|_{H^1_0(\Omega)}^2   
\end{align*}  
which allows the upgrade from weak to strong convergence in  $H_0^1(\Omega) $ of $\widetilde{\mathrm{G}_{\Omega_n} (f_n)_{\Omega_n}}$ to $ \mathrm{G}_{\Omega}  f$, thus finishing the proof of the lemma.
\end{proof}
We now make   the connection between the Euler system and $A_\Omega$ more explicit. Set
 \begin{equation}
\mathbf{K}_\Omega f := \nabla^\perp ( \mathrm{G}_\Omega f), \qquad f \in H^{-1}(\Omega).
\end{equation} We have that
 \begin{align}
& \mathbf{K}_\Omega\in \mathcal{L}(H^{-1}(\Omega), \mathbf{L}^2_\tau(\Omega)),  \label{K-boundweak}\\
& \mathbf{K}_\Omega \in \mathcal{L}\big(L^p(\Omega), \mathbf{W}^{1,p} (\Omega))\cap \mathbf{L}^2_\tau(\Omega)\big), \label{K-boundstrong} \qquad 1<p\leq 2.
\end{align}
\begin{proof}[Proof of \eqref{K-boundweak}-\eqref{K-boundstrong}]
Due  to \eqref{epest1} and \eqref{laxmilgram}, we are only left to verify that
\begin{equation} \label{tangential-K}
\div \mathbf{K}_\Omega f =0, \qquad \gamma_{\mathbf{n}} \mathbf{K}_\Omega f = 0, \qquad \forall f \in H^{-1}(\Omega).
\end{equation}
 Let $\varphi \in \D(\overline{\Omega})$. We then have, integrating by parts,
$$
(\mathbf{K}_\Omega f, \nabla \varphi)_\Omega = (\nabla ( \mathrm{G}_\Omega f), \nabla^\perp \varphi)_\Omega 
= -( \mathrm{G}_\Omega f, \div \nabla^\perp \varphi)_\Omega + \int_{\partial\Omega} (\mathrm{G}_\Omega f )\nabla^\perp \varphi \cdot \mathbf{n}, 
$$
and both terms vanish in the right-hand side. We conclude by means of \eqref{helm-lq}.
\end{proof}

The next lemma shows that $\curl$   is the left inverse of $ \nabla^{\perp}\circ \mathrm{G}_\Omega$ on $L^p(\Omega)$, $1<p\leq 2$.
\begin{lemma}\label{nablaperplemma}
Let  $ 1<p\leq 2$ and  $f \in L^p(\Omega)$. Then
\begin{equation} \label{nablaperp}
 f  =  \curl \mathbf{K}_\Omega f.
\end{equation}
\end{lemma}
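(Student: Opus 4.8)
The plan is to unwind the definitions and reduce the claim to the defining property of $\mathrm{G}_\Omega$. By Theorem \ref{ercvx}, $\mathrm{G}_\Omega f \in W^{2,p}(\Omega)\cap W^{1,p}_0(\Omega)$, so $\mathbf{K}_\Omega f = \nabla^\perp(\mathrm{G}_\Omega f) \in \mathbf{W}^{1,p}(\Omega)$ and $\curl \mathbf{K}_\Omega f$ is a well-defined element of $L^p(\Omega)$. Thus \eqref{nablaperp} is to be read as an equality in $L^p(\Omega)$, with both sides genuinely belonging to $L^p(\Omega)$; it is precisely Theorem \ref{ercvx} that guarantees this for the right-hand side.

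First I would record the elementary pointwise identity
\[
\curl \nabla^\perp g = -\Delta g \qquad \text{a.e. in } \Omega,
\]
valid for every $g \in W^{2,p}(\Omega)$. The point is that only pure (non-mixed) second derivatives of $g$ enter this computation, all of which exist in $L^p(\Omega)$; hence no appeal to the symmetry of mixed derivatives is needed, and the identity holds almost everywhere once the sign conventions for $\nabla^\perp$ and $\curl$ are fixed so that their composition is $-\Delta$. Applying this with $g = \mathrm{G}_\Omega f$ yields $\curl \mathbf{K}_\Omega f = -\Delta \mathrm{G}_\Omega f$ a.e.

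It then remains to identify $-\Delta \mathrm{G}_\Omega f$ with $f$. By Proposition \ref{ellprob} and the definition of $\mathrm{G}_\Omega$, the function $\mathrm{G}_\Omega f$ satisfies $A_\Omega \mathrm{G}_\Omega f = -\Delta \mathrm{G}_\Omega f = f$ in $H^{-1}(\Omega)$, that is, in the sense of distributions. Since $\mathrm{G}_\Omega f \in W^{2,p}(\Omega)$, its distributional Laplacian coincides with its a.e. pointwise Laplacian (the $L^p$ sum of its second partials), so the equation $-\Delta \mathrm{G}_\Omega f = f$ in fact holds almost everywhere in $\Omega$. Combining this with the previous step gives \eqref{nablaperp}.

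For robustness I would also note an alternative that bypasses the pointwise $W^{2,p}$ identity: for $\varphi \in \D(\Omega)$, integrating $(\curl \mathbf{K}_\Omega f, \varphi)_\Omega$ by parts (the boundary terms vanishing since $\varphi$ is compactly supported) gives $\pm(\mathbf{K}_\Omega f, \nabla^\perp \varphi)_\Omega = \pm(\nabla \mathrm{G}_\Omega f, \nabla \varphi)_\Omega$, using that the $90^\circ$ rotation $\nabla^\perp$ preserves the Euclidean inner product; by the $H^1_0$ weak formulation defining $\mathrm{G}_\Omega f$ this equals $(f,\varphi)_\Omega$, and a density argument closes the case. The only thing resembling an obstacle is bookkeeping: ensuring the elliptic equation is invoked in a strong enough (a.e.) sense, and keeping the sign conventions for $\nabla^\perp$ and $\curl$ consistent so that $\curl \nabla^\perp = -\Delta$. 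There is no genuine analytic difficulty here.
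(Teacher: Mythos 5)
Your proposal is correct, and your primary argument takes a genuinely more direct route than the paper's. You use Theorem \ref{ercvx} to place $\mathrm{G}_\Omega f$ in $W^{2,p}(\Omega)$ and then verify \eqref{nablaperp} pointwise a.e.: the identity $\curl\nabla^\perp g=-\Delta g$ involves only pure second weak derivatives (as you rightly note, so no commutation of mixed derivatives is needed), and since the weak second derivatives of a $W^{2,p}$ function realize its distributional derivatives, the Lax--Milgram relation $-\Delta\mathrm{G}_\Omega f=f$ of Proposition \ref{ellprob} upgrades to an equality in $L^p(\Omega)$, a.e. in $\Omega$. The paper instead runs a duality computation: it tests against $\varphi\in\D(\Omega)$ and follows the chain $(f,\varphi)_\Omega=(-\Delta\mathrm{G}_\Omega f,\varphi)_\Omega=(\mathrm{G}_\Omega f,-\Delta\varphi)_\Omega=(\nabla^\perp\mathrm{G}_\Omega f,\nabla^\perp\varphi)_\Omega=(\mathbf{K}_\Omega f,\nabla^\perp\varphi)_\Omega=(\curl\mathbf{K}_\Omega f,\varphi)_\Omega$, the last integration by parts being legitimate because $\mathbf{K}_\Omega f\in\mathbf{W}^{1,p}(\Omega)$ and the boundary term vanishing, and then concludes by density of $\D(\Omega)$. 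Your ``alternative'' route is essentially this paper argument, run in the opposite direction and short-circuited through the Dirichlet form $(\nabla\mathrm{G}_\Omega f,\nabla\varphi)_\Omega$ rather than through $-\Delta\varphi$, so you in fact cover the paper's proof as well. Both approaches rest on exactly the same two inputs --- the $W^{2,p}$ regularity of Theorem \ref{ercvx} (which is what makes $\curl\mathbf{K}_\Omega f$ an honest $L^p$ function rather than a distribution) and the variational definition of $\mathrm{G}_\Omega$. What your main route buys is the elimination of all boundary-term and density bookkeeping; its only cost, which you flag explicitly, is the need to fix the sign conventions for $\nabla^\perp$ and $\curl$ so that their composition is $-\Delta$.
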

\begin{proof} By density, it  suffices to show that
\begin{equation}
\label{nablaperpdense}
(f, \varphi)_\Omega = \left( \curl \mathbf{K}_\Omega f, \varphi \right),  \qquad \forall \varphi  \in  \D(\Omega).
\end{equation}
  Let now $\mathbf{\varphi} \in \D(\Omega) $.
 We have
\begin{align*} (f, \varphi )_\Omega&=  (-\Delta \mathrm{G}_\Omega f , \varphi)_\Omega = (\mathrm{G}_\Omega f , -\Delta \varphi)_\Omega      
=  (\nabla^\perp\mathrm{G}_\Omega f,  \nabla^\perp \varphi)_{\Omega} \\ &= (\mathbf{K}_\Omega f,  \nabla^\perp \varphi)_{\Omega} 
= (\curl \mathbf{K}_\Omega f,    \varphi)_{\Omega} - \int_{\partial \Omega} \varphi(\mathbf{K}_\Omega f)^\perp \cdot \mathbf{n}, 
\end{align*}
and the last term on the right hand side is zero.
We integrated by parts in the last equality, which is legitimate since $\mathbf{K}_\Omega f \in \mathbf{W}^{1,p}(\Omega)$.
\end{proof}
 
\subsection{The spaces $\cic{V}^{1,p}$} The evolution of our solution to the Euler system \eqref{P} will take place in the space of $\cic{L}^p$ 
tangential vector-fields with $L^p$ vorticity. That is, we define, for $1<p<\infty$, 
$$ 
\cic{V}^{1,p}(\Omega):= \big\{\v \in \cic{L}^p_\tau (\Omega) : \curl \v \in L^p(\Omega) \big\}, 
\; \|\v\|_{\cic{V}^{1,p}(\Omega)}  := \big( \|\v\|_{\cic{L}^{ p}(\Omega)}^p + \|\curl \v\|_{L^p(\Omega)}^p \big)^{\frac1p}.
$$
As a consequence of Lemma \ref{nablaperplemma} and Theorem \ref{ercvx}, when $1<p\leq 2$, we have the continuous embedding 
$\cic{V}^{1,p}(\Omega) \hookrightarrow \cic{W}^{1,p}(\Omega)  $:
\begin{equation}
\label{embedding-1q2}
\|\v\|_{\cic{W}^{1,p}(\Omega)} \leq C_{p,\Omega} \|\v\|_{\cic{V}^{1,p}(\Omega)} \qquad \forall \v \in \cic{V}^{1,p}(\Omega), \; 1<p\leq 2.
\end{equation}
 The embedding  discussed above allows for an improvement of the boundary regularity of functions in $\cic{V}^{1,p}(\Omega)$, by further applying 
 Gagliardo's trace theorem to the components of $\v$. More precisely,
 \begin{equation} \label{trace1}
 \v|_{\partial \Omega} \in \cic{W}^{1-\frac1p,p'}(\partial \Omega), \qquad \forall 1<p\leq 2.
\end{equation}
In view of \eqref{trace1}, we can therefore make sense of $\gamma_\cic{n} \v$ as $\v \cdot \cic{n}|_{\partial \Omega}$ whenever $\v \in \cic{V}^{1,p}(\Omega)$.

\section{Elliptic regularity in a  rectangle} \label{Sec3} We assume throughout this section that $\Omega=[0,L_1]\times [0,L_2]$. Hereafter, we develop an appropriate substitute of Theorem  \ref{ercvx} in the range $2<p\leq\infty$, with spaces of functions with bounded mean oscillation replacing $L^\infty$.
\subsection{Local bmo spaces}
We denote by ${\mathrm{bmo}(\R^2)}$ the strict subspace of $\mathrm{BMO}(\R^2)$ normed by
\begin{equation}
\label{bmonorm}
\|f\|_{\mathrm{bmo}(\R^2)} = \sup_{|Q| < 1} \frac{1}{|Q|}\int_{Q} |f(x)-f_Q|\, \d x +\sup_{|Q| \geq 1} \frac{1}{|Q|}\int_{Q} |f(x)|\, \d x  
\end{equation}
where the suprema above are taken over squares $Q \subset \R^2 $, $|Q|$ denotes the area of $Q$,  and $f_Q$ denotes the average of $f$ on the cube $Q$.
See for example \cite{CDS} for more details.  

Let $D \subset \R^2$ be  a domain.
For a function $f:  \overline  D \to \R,$ let $\overline f$ be its extension to zero outside $\overline D $, i.e. $\overline  f:=f\cic{1}_{\overline D}$. We define the Banach spaces
$$
\mathrm{bmo}_z( {D}) = \{f: \overline {D} \to \R: \overline f \in \mathrm{bmo}(\R^2) \}, \qquad  \|f\|_{\mathrm{bmo}_z({D})} := \|\overline f\|_{\mathrm{bmo}(\R^2)},
$$i.e.\
the space of functions on $\overline{D}$ whose trivial extension is in ${\mathrm{bmo}(\R^2)}$, and 
\begin{align*}
&\mathrm{bmo}_r( {D}) = \{f: {D} \to \R: \exists F \in \mathrm{bmo}(\R^2) \textrm{ with } F|_{D}=f \}, \\ & \|f\|_{\mathrm{bmo}_r({D})} :=  \inf_{\substack{F \in \mathrm{bmo}(\R^2) \\ F|_D=f}}\|F\|_{\mathrm{bmo}(\R^2)},
\end{align*}
i.e. the space of functions on ${D}$ which are restrictions to $D$ of functions in ${\mathrm{bmo}(\R^2)}$.
The continuous embeddings $
L^\infty({D})  \hookrightarrow\mathrm{bmo}_z( {D})\hookrightarrow\mathrm{bmo}_r( {D}) $ are immediate to verify; this, together with  John-Nirenberg's inequality
\begin{equation} \label{JN}
\|f\|_{L^p({D})} \leq C_{D} p \|f\|_{\mathrm{bmo}_r( {D}) },
\end{equation}
where the constant $C_{D}$ is only dependent on $\mathrm{diam}({D})$, hints at the relevance of  $\mathrm{bmo}_z( {D})$ and $\mathrm{bmo}_r( {D})$   as a substitute  for $L^\infty({D})$. For $\star\in\{z,r\}$, we use the notation
\begin{align*}  &
W^{2,\mathrm{bmo}_\star}(D)= \{f \in \mathrm{bmo}_\star(D): D^2 f \in  \mathrm{bmo}_\star(D)^{2\times 2}\}, \\ & \|f\|_{W^{2,\mathrm{bmo}_\star}(D)}:= \|  f\|_{\mathrm{bmo}_\star(D)  } 
+\|D^2 f\|_{\mathrm{bmo}_\star( D)^{2\times2} }.
\end{align*}
The next theorem, which we quote from \cite{CDS},  tells us that the solution to the Dirichlet problem $\mathrm{G}_D f$ is in
$
W^{2,\mathrm{bmo}_z}(D)$
whenever $f \in \mathrm{bmo}_z(D)$.
\begin{theorem}
\label{stein-thm} Let $D$ be  either a bounded domain of class $\C^2$ or the halfspace $\R^2_+$. Then there exists a constant $C_D>0$, depending only on $D$, such that\begin{equation}
\label{epestbmohp}
\mathrm{G}_{D}: {\mathrm{bmo}_z(D) }  \to   W^{2,\mathrm{bmo}_z}(D),   \qquad
 \|{{\mathrm{G}}}_{D}f\|_{W^{2,\mathrm{bmo}_z}(D)} \leq C_D  \|f\|_{\mathrm{bmo}_z( D) }.
\end{equation}
\end{theorem}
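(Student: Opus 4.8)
The plan is to establish the estimate \eqref{epestbmohp} by the classical localize-and-flatten scheme, reducing the problem to two model computations --- the interior estimate on the whole plane and the boundary estimate on the halfspace $\R^2_+$ --- and in each of these invoking the boundedness of second-order Calder\'on--Zygmund operators on the local space $\mathrm{bmo}$. Since \eqref{epestbmohp} is an a priori bound, I would first fix a finite cover of $\overline D$ together with a subordinate partition of unity $\{\chi_k\}$. On patches meeting $\partial D$ I would straighten the boundary by a $\C^2$ diffeomorphism onto a piece of $\R^2_+$; since such a change of variables has $\C^1$ principal part, it acts boundedly on $\mathrm{bmo}$, so both $\mathrm{bmo}_z$ and $W^{2,\mathrm{bmo}_z}$ are preserved up to constants depending only on the $\C^2$ character of $D$. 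The commutator source terms $[\Delta,\chi_k]u = 2\nabla\chi_k\cdot\nabla u + (\Delta\chi_k)u$ produced by the localization are of lower order, and I would absorb their contribution using first-order control on $u$ together with the John--Nirenberg bound \eqref{JN}.

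For the interior model one solves $-\Delta u = f$ on $\R^2$ by $u = \Phi * f$, with $\Phi$ the logarithmic fundamental solution; then $\partial_i\partial_j u = T_{ij} f$, where $T_{ij}$ is the convolution operator with kernel $\partial_i\partial_j\Phi$, homogeneous of degree $-2$ and with vanishing mean on spheres. The needed ingredient is that such a second-order Riesz-type operator maps $\mathrm{bmo}(\R^2)\to\mathrm{bmo}(\R^2)$ with norm controlled by its Calder\'on--Zygmund constants. The unit-scale truncation built into \eqref{bmonorm} is handled by splitting the kernel into its part at scales $\le 1$, which supplies the oscillation control in the first supremum, and its tail at scales $\ge 1$, which acts boundedly on the $L^1$-type averages and feeds the second supremum.

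For the boundary model one uses the Dirichlet Green function on the halfspace, $G(x,y) = \Phi(x-y) - \Phi(x-y^\ast)$ with $y^\ast$ the reflection of $y$ across $\{x_2 = 0\}$; equivalently, I would extend $f$ by odd reflection to $\tilde f$, solve on all of $\R^2$, and restrict. Writing $R$ for the reflection isometry and $\overline f$ for the zero extension, we have $\tilde f = \overline f - \overline f\circ R$, so $\tilde f \in \mathrm{bmo}(\R^2)$ with $\|\tilde f\|_{\mathrm{bmo}} \le 2\|f\|_{\mathrm{bmo}_z}$; the second derivatives of the solution are again $T_{ij}\tilde f \in \mathrm{bmo}(\R^2)$, and the homogeneous Dirichlet condition $u|_{x_2=0}=0$ is built in by the odd symmetry.

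The main obstacle is the passage to the zero-extension space $\mathrm{bmo}_z$ demanded by the statement, rather than the easier restriction space $\mathrm{bmo}_r$. Extending the second derivatives of $u$ by zero across $\partial D$ is not innocuous: the tangential second derivative $\partial_\tau^2 u$ vanishes on the boundary (since $u$ does), but the normal one satisfies $\partial_\nu^2 u|_{\partial D} = -f|_{\partial D} - \partial_\tau^2 u|_{\partial D} = -f|_{\partial D}$, so $\overline{\partial_\nu^2 u}$ carries a jump across $\partial D$ of size $f|_{\partial D}$, which is only of $\mathrm{bmo}$ size and not bounded. Showing that this boundary defect does not destroy the global $\mathrm{bmo}(\R^2)$ norm is the heart of the matter: one must exploit both the $\C^2$ flatness of $\partial D$, so that the straightened kernels remain honest Calder\'on--Zygmund kernels up to acceptable error, and the compatibility of the zero extension with the odd-reflection solution, controlling the near-boundary contribution of the reflected kernel $\partial_i\partial_j\Phi(x-y^\ast)$ directly. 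This delicate bookkeeping --- including the re-verification of the unit-scale truncation in \eqref{bmonorm} through the flattening and patching --- is exactly the content of the estimate quoted from \cite{CDS}.
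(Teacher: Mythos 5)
You should first note that the paper contains no proof of this statement: Theorem \ref{stein-thm} is imported wholesale from \cite{CDS} ("which we quote from \cite{CDS}"), so there is no internal argument to match. Judged on its own merits, your outline has the right architecture, and parts of it are literally reused elsewhere in the paper: the odd-reflection device with the bound $\|\tilde f\|_{\mathrm{bmo}(\R^2)}\leq 2\|f\|_{\mathrm{bmo}_z}$ is exactly the manipulation the authors perform at the corners in the proof of Proposition \ref{bmolaw}. The interior model (second-order Riesz-type operators on $\mathrm{bmo}$, with the unit-scale splitting of the kernel) and the localization/flattening scheme are also the expected ingredients.

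However, as a proof the attempt is circular at its decisive step, and you say so yourself: the final paragraph declares that controlling the zero extension of $D^2 u$ across $\partial D$ "is exactly the content of the estimate quoted from \cite{CDS}" --- but that estimate \emph{is} \eqref{epestbmohp}, the statement to be proved. To see that this is not mere bookkeeping, look at the halfspace model with odd data $\tilde f$: the components $\partial_{11}u$ and $\partial_{22}u$ are odd in $x_2$, and for an odd $g\in\mathrm{bmo}(\R^2)$ the zero extension $g\,\mathbf{1}_{\{x_2>0\}}$ is indeed controlled, since $g_Q=0$ on cubes symmetric about the boundary. But $\partial_{12}u$ is \emph{even} in $x_2$ with a generically non-vanishing boundary trace, and the zero extension of an even bmo function fails in general --- $\log|x_1|\,\mathbf{1}_{\{x_2>0\}}$ is not in $\mathrm{BMO}(\R^2)$, as cubes straddling the axis near the origin have mean oscillation of size $\log(1/\ell)$. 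So precisely for the mixed derivative one must exploit quantitatively that $f\in\mathrm{bmo}_z(D)$ rather than $\mathrm{bmo}_r(D)$, and your sketch supplies no mechanism for this; your heuristic about the jump of $\partial_\nu^2 u$ gestures at the obstruction without resolving it. Lesser soft spots in the same vein: the trace identity $\partial_\nu^2 u|_{\partial D}=-f|_{\partial D}$ omits the first-order curvature term on a curved $\C^2$ boundary (and $f|_{\partial D}$ is not pointwise defined for bmo data); the halfspace Green potential does not converge absolutely against general bmo data with logarithmically growing averages, so the solution class must be specified (harmless for the compactly supported localized pieces, but it should be said); and the claimed invariance of $W^{2,\mathrm{bmo}_z}$ under $\C^2$ flattening needs the multiplier estimate $|g_Q|\lesssim(1+\log(1/\ell(Q)))\|g\|_{\mathrm{bmo}}$ against Lipschitz factors, which you assert rather than verify. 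In short: correct reductions, matching the architecture of \cite{CDS}, but a genuine gap at the crux --- the argument ultimately defers the essential zero-extension estimate back to the very reference whose theorem it purports to prove.
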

\subsection{Theorem \ref{stein-thm} in a rectangle} We prove the following proposition, which is a (partial) extension of Theorem \ref{stein-thm} to the (nonsmooth, convex) domain $\Omega=[0,L_1] \times [0,L_2]$.
\begin{proposition} Let $\,\Omega=[0,L_1] \times [0,L_2]$. There exists $C=C(L_1,L_2)$ such that \label{bmolaw}
\begin{equation} 
\label{epest3}  \|{{\mathrm{G}}}_{\Omega}f\|_{W^{2,\mathrm{bmo}_r}(\Omega)} \leq C \|  f\|_{\mathrm{bmo}_z( \Omega)  }. 
\end{equation}  
\end{proposition}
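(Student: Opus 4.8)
The plan is to eliminate the corners by an odd unfolding of $\Omega$, which converts the Dirichlet problem into a boundaryless problem on a flat torus, where the \emph{interior} content of Theorem~\ref{stein-thm} can be brought to bear. Set $\T=\R^2/(2L_1\Z\times 2L_2\Z)$ and let $F$ be the $\T$-periodic extension of $f$ obtained by reflecting oddly across each side of $\Omega$. Writing $\sigma_1,\sigma_2$ for the coordinate reflections $x_1\mapsto -x_1$, $x_2\mapsto-x_2$ and $\overline f=f\cic{1}_{\overline\Omega}$ as in the definition of $\mathrm{bmo}_z$, on the period $P=[-L_1,L_1]\times[-L_2,L_2]$ one checks the identity
$$F|_P=\overline f-\overline f\circ\sigma_1-\overline f\circ\sigma_2+\overline f\circ\sigma_1\sigma_2.$$
Each reflection is a measure-preserving bijection carrying squares to squares, so every summand lies in $\mathrm{bmo}(\R^2)$ with the same norm as $\overline f$; since $F|_P$ is supported in $P$, periodizing produces $F\in\mathrm{bmo}(\T)$ (defined by the norm \eqref{bmonorm} of the periodic representative) with $\|F\|_{\mathrm{bmo}(\T)}\le C\|f\|_{\mathrm{bmo}_z(\Omega)}$. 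The large-cube term in \eqref{bmonorm} is controlled using $\int_\T F=0$ together with the John--Nirenberg bound \eqref{JN}.

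\emph{Solution on the torus and symmetry.} As $F\in\mathrm{bmo}(\T)\subset L^2(\T)$ has zero mean, let $U\in H^2(\T)$ be the unique mean-zero solution of $-\Delta U=F$. By uniqueness $U$ inherits the symmetries of $F$: the function $-U\circ\sigma_1$ solves the same mean-zero problem, whence $U=-U\circ\sigma_1$, and likewise $U=-U\circ\sigma_2$; on $\T$ the reflection across $x_i=L_i$ agrees with $\sigma_i$, so $U$ is odd across all four lines bounding $\Omega$. Consequently the trace of $U$ vanishes on $\partial\Omega$, so $U|_\Omega\in H^1_0(\Omega)$ solves $-\Delta(U|_\Omega)=f$, and Proposition~\ref{ellprob} forces $U|_\Omega=\mathrm{G}_\Omega f$. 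Solving on $\T$ first and \emph{deducing} the boundary values from symmetry is what lets us sidestep any analysis of the solution's behaviour at the corners.

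\emph{Interior regularity on $\T$ and conclusion.} It remains to prove $\|U\|_{\mathrm{bmo}(\T)}+\|D^2U\|_{\mathrm{bmo}(\T)}\le C\|F\|_{\mathrm{bmo}(\T)}$. I would cover $\T$ by finitely many balls $B\Subset B'$, with $B'$ a smooth disk, and split $U=U_1+U_2$ on $B'$, where $U_1=\mathrm{G}_{B'}(F\cic{1}_{B'})$ and $U_2=U-U_1$ is harmonic in $B'$. Theorem~\ref{stein-thm} applied to the $\C^2$ domain $B'$ bounds $D^2U_1$ in $\mathrm{bmo}_z(B')$, hence on $B$, by $C\|F\|_{\mathrm{bmo}(\T)}$, while $D^2U_2$ is harmonic and therefore smooth, controlled on $B$ by interior estimates; summing over the cover gives the claim. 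Since the periodic function $D^2U$ then belongs to $\mathrm{bmo}(\R^2)$ and restricts to $D^2\mathrm{G}_\Omega f$ on $\Omega$, the very definition of $\mathrm{bmo}_r(\Omega)$ yields $\mathrm{G}_\Omega f\in W^{2,\mathrm{bmo}_r}(\Omega)$ with $\|\mathrm{G}_\Omega f\|_{W^{2,\mathrm{bmo}_r}(\Omega)}\le C\|F\|_{\mathrm{bmo}(\T)}\le C\|f\|_{\mathrm{bmo}_z(\Omega)}$, which is \eqref{epest3}.

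The main obstacle is the $\mathrm{bmo}$ bookkeeping: establishing that the odd reflection preserves membership in $\mathrm{bmo}$ (the displayed identity) and that the three relevant scales of spaces --- $\mathrm{bmo}_z(\Omega)$, $\mathrm{bmo}(\T)$, and $\mathrm{bmo}_r(\Omega)$ --- are matched with uniform constants, paying particular attention to the large-cube term in \eqref{bmonorm} and to the passage between the periodic picture on $\R^2$ and the torus. The elliptic input, by contrast, is reduced to the already-available interior and half-space content of Theorem~\ref{stein-thm}, together with the $H^2$ solvability guaranteed for the convex domain $\Omega$ by Theorem~\ref{ercvx} at $p=2$.
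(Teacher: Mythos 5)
Your strategy is sound and genuinely different from the paper's. The paper works locally: a partition of unity $\{\mu_i\}$ splits the solution into an interior piece, handled by Theorem \ref{stein-thm} on a smooth subdomain, and four corner pieces, each reduced by a \emph{single} odd reflection across one side to a Dirichlet problem on $\R^2_+$, so that the half-space case of Theorem \ref{stein-thm} supplies all the boundary regularity; the price is the preliminary bootstrap \eqref{bootstrap} (Grisvard's $W^{2,3}$ estimate plus Sobolev embedding) needed to place the commutator terms $\nabla F\cdot\nabla\mu_i+F\Delta\mu_i$ in $\mathrm{bmo}_z$, cf.\ \eqref{bootstrap2}. You instead unfold globally to the torus, after which no boundary remains and only interior regularity is needed. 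Your symmetry-and-uniqueness identification of $U|_\Omega$ with $\mathrm{G}_\Omega f$ via Proposition \ref{ellprob} is correct ($U\in H^2(\T)$ is continuous and odd across the four lines, hence has zero trace on $\partial\Omega$), and the bmo bookkeeping for the periodization does close: since $\overline f$ vanishes off $\overline\Omega$, each reflected translate of $\overline f$ is supported in its own cell of the tiling, so on any square of side at most $\min(L_1,L_2)$ the periodized $F$ coincides with a sum of at most four such global bmo functions, while the large-square term in \eqref{bmonorm} follows from the per-period $L^1$ bound and \eqref{JN}. Your route thus avoids both the half-space case of Theorem \ref{stein-thm} and the corner bootstrap; the paper's localization, on the other hand, extends to polygonal-like domains with curved $\C^2$ sides and apertures $\frac\pi m$ (Subsection \ref{remarkpoly}), where a global unfolding is not available.

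One step fails as written, though it is repairable. In the splitting $U=U_1+U_2$ you set $U_1=\mathrm{G}_{B'}(F\cic{1}_{B'})$ and apply Theorem \ref{stein-thm} on $B'$, which requires $\|F\cic{1}_{B'}\|_{\mathrm{bmo}_z(B')}\lesssim\|F\|_{\mathrm{bmo}(\T)}$. Sharp truncation by an indicator does not preserve bmo: take $x_0\in\partial B'$ and $F$ equal, near $x_0$, to $\log\frac{1}{|x-x_0|}$ (cut off smoothly at a fixed scale so that $F\in\mathrm{bmo}$); then $F\cic{1}_{B'}$ behaves at $x_0$ like the one-sided logarithm $\cic{1}_{\{x_1>0\}}\log\frac1{|x|}$, whose mean oscillation over squares of side $\eps$ centered at $x_0$ grows like $\log\frac1\eps$, so $F\cic{1}_{B'}\notin\mathrm{bmo}(\R^2)$ and the claimed bound on $D^2U_1$ is unavailable. (There is a genuine tension here: the sharp cutoff is what made $U_2$ harmonic in all of $B'$, but it destroys the bmo control of $U_1$.) The fix is routine: take three nested balls $B\Subset B''\Subset B'$ and a smooth $\chi$ supported in $B'$ with $\chi\equiv1$ on $B''$, and set $U_1=\mathrm{G}_{B'}(\chi F)$. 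Multiplication by a fixed compactly supported $\C^1$ function is bounded on bmo, since for small squares $\ell(Q)\,|F_{Q}|\lesssim \ell(Q)\bigl(1+\log^+ \ell(Q)^{-1}\bigr)\|F\|_{\mathrm{bmo}}$ stays bounded --- the same multiplier mechanism that underlies \eqref{bootstrap2} --- so Theorem \ref{stein-thm} now controls $D^2U_1$ in $\mathrm{bmo}_z(B')$. Then $\Delta U_2=-(1-\chi)F$ vanishes only on $B''$, which is enough: interior estimates for the harmonic function $U_2$ on $B\Subset B''$, together with $\|U_2\|_{L^2(B'')}\le\|U\|_{L^2(\T)}+\|U_1\|_{L^2(B')}\lesssim\|F\|_{L^2(\T)}\lesssim\|F\|_{\mathrm{bmo}(\T)}$ (the last step by John--Nirenberg), bound $\|D^2U_2\|_{L^\infty(B)}$. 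With this modification your argument is complete.
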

\begin{proof} 
In this proof, the almost inequality sign is hiding a positive constant, possibly varying from line to line and  depending on $\Omega$ only. We also refer to Remark \ref{remarkpoly} for notation.

Denote by $F:= \mathrm{G}_{\Omega}f$.
We preliminarily observe that, by Grisvard's classical $L^p-W^{2,p}$ estimate for domains with corners of aperture less than or equal to $\frac\pi2$ (which we recall in Section \ref{s6}, see  \eqref{epestpoly}) applied for $p=3$, and John-Nirenberg's inequality \eqref{JN}
$$
\|F\|_{W^{2,3}(\Omega)}\lesssim  \|f\|_{L^3(\Omega)} \lesssim  \| f\|_{\mathrm{bmo}_z(\Omega)}.
$$
By the Sobolev embedding $W^{2,3}(\Omega) \subset \C^{1,\frac13}(\overline \Omega)$,  we have in particular that
\begin{equation}
\label{bootstrap} \nabla F \in \C^{0,\frac13}(\overline\Omega)^2, \qquad 
\|F\|_{L^\infty(\Omega)} + \|\nabla F\|_{\cic{L}^\infty(\Omega)} \lesssim \| f\|_{\mathrm{bmo}_z(\Omega)}.
\end{equation}

The core of the argument begins now. Let $ \C:=\big\{\Omega_0 \cup \{B_i: i =1, \ldots, 4\}\big\}$  be an  open cover of $\overline \Omega$ as follows:  each $ B_i $ is an open ball centered at the corner $S_i$,  of  radius $\rho>0$ chosen small enough so that $B_i \cap B_j =\emptyset$ when $1 \leq i <j \leq 4$; $\Omega_0 \subset \Omega$ is an open set with smooth boundary and $
 \mathrm{dist}(\overline{\Omega_0}, \{S_1,\ldots, S_4\}) > \frac \rho 2.
$
For $i=1,\ldots,4$, set   $\Omega_i=B_i \cap \Omega$.
Let $\{\mu_0, \mu_1, \ldots, \mu_4 \}$ be a partition of unity subordinated to the cover $\C$, and write $F_i= F\mu_i$. Then $F _i$ solves the Dirichlet problem
\begin{equation}
\label{ep-ith}  
\begin{cases}
\Delta F_i = f_i & \textrm{on } \Omega_i, \\
F_i=0 &  \textrm{on } \partial  \Omega_i , 
\end{cases}\qquad 
f_i= \mu_i f+ \nabla F \cdot \nabla \mu_i + F \Delta \mu_i, \qquad i=0,\ldots, 4.
\end{equation} 
We gather from \eqref{bootstrap} that  
\begin{equation}
\label{bootstrap2} \|f_i\|_{\mathrm{bmo}_z(\Omega)} \lesssim  \|\mu_i\|_{C^2(\Omega)} \big( \|f\|_{\mathrm{bmo}_z(\Omega)} + \|\nabla F\|_{\cic{L}^\infty(\Omega)} + \| F\|_{L^\infty(\Omega)} \big) \lesssim \|f\|_{\mathrm{bmo}_z(\Omega)}.
\end{equation}
Since $  \Omega_0$ is a smooth domain, we apply Theorem \ref{stein-thm} with $D=\Omega_0$, and get
\begin{equation}
\label{zeroth} \|F_0\|_{W^{2,\mathrm{bmo}_r}(\Omega)} =  \|F_0\|_{W^{2,\mathrm{bmo}_z}(\Omega_0)}\lesssim \|f_0\|_{\mathrm{bmo}_z(\Omega_0)} \lesssim  \|f\|_{\mathrm{bmo}_z(\Omega)}.
\end{equation}
We now deal with the cases $i\geq 1$ and  show that
\begin{equation}
\label{ith} \|F_i\|_{W^{2,\mathrm{bmo}_r}( \Omega)} \lesssim \|f_i\|_{\mathrm{bmo}_z(\Omega)}, \qquad i=1,\ldots, 4
\end{equation}
which will be enough to obtain  \eqref{epest3}. 
The four corners are treated in the exact same way, so we fix  $i=1$, and $S_1=(0,0)$.  Note that $F_1$ solves the Dirichlet problem 
$$
\begin{cases}
\Delta F_1 = f_1 & \textrm{on } \Gamma: (0,\infty) \times (0,\infty), \\
F_1(x_1,0)=0 &   x_1 \geq 0, \\
F_1(0,x_2)=0 &   x_2 \geq 0. 
\end{cases} 
$$  
Consider the odd reflections of $F_1$ and $f_1$, defined on $\R^2_+$ by:$$
w(x_1,x_2):= \mathrm{sign}(x_1) F_1(|x_1|,x_2) , \qquad h(x_1,x_2):=  \mathrm{sign}(x_1) f_1(|x_1|,x_2).
$$
One sees that $w$ is the solution to the Dirichlet problem on $\R^2_+$  with data $h$ and  zero boundary condition, i.e.\ $w= \mathrm{G}_{\R^2_+} h $. We note  that $\|h\|_{\mathrm{bmo}_z(\R^2_+)} \leq 2 \|f_1\|_{\mathrm{bmo}_z(\Gamma)};$ indeed, this is    the same as  $\|\overline h\|_{\mathrm{bmo}(\R^2)} \leq 2 \|\overline{f_1}\|_{\mathrm{bmo}(\R^2)}$; recall that the bar denotes extension by zero to $\R^2$. This is clear, since $\overline h (x_1,x_2) = \overline{f_1} (x_1,x_2) - \overline{f_1}(-x_1,x_2). $ Therefore, we can apply Theorem \ref{stein-thm} in the case of $D=\R^2_+$, and deduce that
\begin{equation}
\label{ith-sq3} \|w\|_{W^{2,\mathrm{bmo}_z}( \R^2_+)} \lesssim  \|h\|_{\mathrm{bmo}_z(\R^2_+)}\lesssim  \|f_1\|_{\mathrm{bmo}_z(\Gamma)}= \|f_1\|_{\mathrm{bmo}_z(\Omega)}.
\end{equation}
Since  $w$ restricted to $\Gamma$ coincides with $F_1$, and $F_1$ is supported in $\Gamma$, we have $$ \|F_1\|_{W^{2,\mathrm{bmo}_r}( \Omega)} =  \|F_1\|_{W^{2,\mathrm{bmo}_r}( \Gamma)}  \leq  \|w\|_{W^{2,\mathrm{bmo}_z}( \R^2_+)}, $$ and thus \eqref{ith} follows from  \eqref{ith-sq3}. The proof of Proposition \ref{bmolaw} is complete.
\end{proof}

\subsection{The space $\cic{V}^{1,\infty}(\Omega)$} In view of the elliptic regularity result \eqref{epest3}, for $\Omega=[0,L_1]\times [0,L_2]$ it makes sense to extend the scale of spaces $\cic{V}^{1,p}(\Omega)$ to the endpoint $p=\infty$ by setting
$$
\cic{V}^{1,\infty}(\Omega):= \big\{\v \in \cic{L}^2_\tau (\Omega) : \curl \v \in L^\infty(\Omega) \big\}, 
\quad \|\v\|_{\cic{V}^{1,\infty}(\Omega)}  =  \|\v\|_{\cic{L}^{ \infty}(\Omega)}  + \|\curl \v\|_{L^\infty(\Omega)}.
$$
As a consequence of Proposition \ref{bmolaw}, we have the continuous embedding
 \begin{equation}
\label{Wtauinftyemb}
\cic{V}^{1,\infty}(\Omega) \hookrightarrow \cic{W}^{1,\mathrm{bmo}_z}(\Omega)=\{\v \in \mathrm{bmo}_z(\Omega)^2: \nabla \v \in  \mathrm{bmo}_z(\Omega)^{2\times 2}\}
\end{equation}
and in turn, John-Nirenberg inequality gives in particular that \begin{equation}
\label{JNineq}
\|\nabla \v\|_{L^p(\Omega)^{2 \times 2}} \leq Cp \|\v\|_{\cic{V}^{1,\infty}(\Omega) } \qquad \forall p \in [1,\infty).
\end{equation}
A further consequence of Proposition \ref{bmolaw} is that,   when $\Omega$ is a rectangle,
\begin{equation}
\label{embedding}
\|D^2 \mathrm{G}_\Omega f\|_{L^p(\Omega)^{2\times2}} \leq C(p) \|f\|_{L^p(\Omega) }, \qquad 2\leq p<\infty
\end{equation}
which further implies  the embedding $\cic{V}^{1,p}(\Omega) \hookrightarrow \cic{W}^{1,p}(\Omega) $ for all $2<p<\infty$ as well.  We outline how \eqref{embedding} follows from   the two bounds \begin{equation}\label{eq2}
\|T f\|_{L^2(\Omega)^{2\times2}} \leq C \|f\|_{L^2(\Omega) }, \qquad 
\|T f\|_{\mathrm{bmo}_r( \Omega)^{2\times2}} \leq C \|f\|_{ \mathrm{bmo}_z( \Omega)} \leq C  \|f\|_{ L^\infty( \Omega)}
\end{equation}
on the zeroth order linear operator $Tf=D^2 \mathrm{G}_\Omega f$, 
the first of which is the case $p=2$ of Theorem \ref{ercvx} and the second of which is Proposition \ref{bmolaw}. We define the sharp function  $$  f^\sharp (x):=\sup_{Q \ni x } \frac{1}{|Q|}\int_{Q} |f(y)-f_Q|\, \d y +\sup_{\substack{Q \ni x  \\ |Q| \geq 1}} \frac{1}{|Q|}\int_{Q} |f(y)|\, \d y  
$$
the supremum being taken over squares $Q \subset \Omega$ with axis-parallel sides.   It can be inferred from   \cite[Theorem 1.4]{Ch} that
\begin{equation}\label{eq3}
\|  f\|_{\mathrm{bmo}_r( \Omega)} = \|f^\sharp\|_{ L^\infty( \Omega) }.
\end{equation}
Moreover, for $1<p<\infty$
\begin{equation}\label{eq4}
\|f\|_{L^p(\Omega) } \leq C_1(p) \|f^\sharp\|_{L^p(\Omega)} \leq C_2(p) \|f \|_{L^p(\Omega)};
\end{equation}
the first half is a
local version (with no changes in the proof) of \cite[Theorem IV.2.2]{Stein}, and the second half is the pointwise bound $|f^\sharp| \leq 4 Mf$, where $M$ is the Hardy-Littlewood maximal function, followed by the maximal theorem.
The  bounds  in \eqref{eq2} then become
$$
\|(T f)^\sharp\|_{L^2(\Omega)^{2\times2}} \leq C \|f\|_{L^2(\Omega) }, \qquad 
\|(T f)^\sharp\|_{L^\infty(\Omega)^{2\times2}}    \leq C  \|f\|_{ L^\infty( \Omega)}
$$
and one applies Marcienkiewicz interpolation to the sublinear operator $f \mapsto (Tf)^\sharp$, and subsequently \eqref{eq4} again, to finally arrive at \eqref{embedding}.


\section{The two-dimensional Euler equations  in a bounded convex domain} In this section, we formulate problem \eqref{P} in a suitable weak sense and construct a weak solution. 
\label{s3}
\subsection{Weak formulation of \eqref{P} in bounded convex domains}
Our first task is to provide a weak formulation of \eqref{P} when $\Omega$ is a bounded convex domain, which we continue to assume throughout this section.
For convenience, we say that a triple of exponents $(r_1,r_2,r_3)$ is a H\"older triplet if
$$
 1\leq r_1,r_2,r_3 \leq  \infty, \quad \textstyle \frac{1}{r_1}+ \frac{1}{r_2} +\frac{1}{r_3}=1.
$$
\subsubsection{The trilinear form} We define the trilinear form
$$
b_{\Omega}(\u,\v,\cic{w}) = \int_{\Omega} (\mathbf{u}\cdot \nabla) \mathbf{v} \cdot  \cic{w} = ( (\mathbf{u}\cdot \nabla) \mathbf{v}, \cic{w})_{\Omega}, 
\qquad \u, \v, \w  \in  \D(\overline{\Omega}; \R^2).
$$ 
\begin{lemma} \label{trilinear1} {\emph{\cite{TEM2}}} Let $(r_1,r_2,r_3)$  be a H\"older triplet. Then the form $b_\Omega$ extends to a trilinear continuous form $$
b_\Omega: \cic{L}^{r_1}_{\tau}(\Omega) \times  \cic{W}^{1,r_2}  (\Omega) \times \cic{L}^{r_3}  (\Omega);
$$
moreover, we have
\begin{equation}
b_{\Omega}(\u,\v,\cic{w}) + b_{\Omega}(\u,\cic{w},\cic{v}) = 0, \qquad\forall \u \in \cic{L}^{r_1}_{\tau}(\Omega),  \quad \v, \w  \in   \cic{W}^{1,r_2}  (\Omega) \cap  \cic{L}^{r_3}  (\Omega).\label{tr-1}
\end{equation}
\end{lemma}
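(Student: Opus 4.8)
The plan is to derive both assertions from H\"older's inequality together with the defining tangency property of $\u$.

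For the continuity statement, I would simply note that for a H\"older triplet $(r_1,r_2,r_3)$ and $\u,\v,\w \in \D(\overline\Omega;\R^2)$, the pointwise product rule and H\"older's inequality give
\[
|b_\Omega(\u,\v,\w)| = \Big| \int_\Omega (\u\cdot\nabla)\v\cdot\w \Big| \leq \|\u\|_{\cic{L}^{r_1}(\Omega)}\,\|\nabla\v\|_{\cic{L}^{r_2}(\Omega)}\,\|\w\|_{\cic{L}^{r_3}(\Omega)}.
\]
Since membership $\v \in \cic{W}^{1,r_2}(\Omega)$ means precisely $\nabla\v \in \cic{L}^{r_2}(\Omega)$, the very same integral formula is absolutely convergent and defines a bounded trilinear form on $\cic{L}^{r_1}_\tau(\Omega)\times\cic{W}^{1,r_2}(\Omega)\times\cic{L}^{r_3}(\Omega)$ that restricts to $b_\Omega$ on smooth triples; this is the asserted continuous extension (unique on the factors carrying a finite exponent, where $\D(\overline\Omega;\R^2)$ is dense).

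For the cancellation identity \eqref{tr-1}, set $\varphi := \v\cdot\w$. For $\v,\w \in \cic{W}^{1,r_2}(\Omega)\cap\cic{L}^{r_3}(\Omega)$ the distributional Leibniz rule reads, componentwise,
\[
\partial_i\varphi = \sum_{j} (\partial_i v_j)\,w_j + \sum_{j} v_j\,(\partial_i w_j), \qquad i = 1,2.
\]
The key point is that the right-hand side lies in $L^{r_1'}(\Omega)$, where $\tfrac{1}{r_1'} = \tfrac{1}{r_2}+\tfrac{1}{r_3}$: indeed $\nabla\v,\nabla\w \in \cic{L}^{r_2}(\Omega)$ while $\v,\w \in \cic{L}^{r_3}(\Omega)$ (and in $\cic{L}^{r_2}(\Omega)$ as well, $\Omega$ being bounded), so each summand is a product of an $\cic{L}^{r_2}$ factor with an $\cic{L}^{r_3}$ factor, hence of class $L^{r_1'}$ by H\"older; the same estimate gives $\varphi \in L^{r_1'}(\Omega)$, so that $\varphi \in W^{1,r_1'}(\Omega)$. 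Contracting the Leibniz identity against $\u$ and integrating, the first group of terms reproduces $b_\Omega(\u,\v,\w)$ and the second $b_\Omega(\u,\w,\v)$, whence
\[
b_\Omega(\u,\v,\w) + b_\Omega(\u,\w,\v) = (\u, \nabla\varphi)_\Omega,
\]
which reduces the claim to $(\u,\nabla\varphi)_\Omega = 0$.

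Finally, to kill this last quantity I would exploit $\u \in \cic{L}^{r_1}_\tau(\Omega)$. By the definition \eqref{TVF} one has $(\u,\nabla\psi)_\Omega = 0$ for every $\psi \in \D(\overline\Omega)$; since $\u \in \cic{L}^{r_1}(\Omega)$, the functional $\psi \mapsto (\u,\nabla\psi)_\Omega$ is continuous on $W^{1,r_1'}(\Omega)$ by H\"older, and, $\Omega$ enjoying the strong local Lipschitz property (Proposition \ref{cvx1}), $\D(\overline\Omega)$ is dense in $W^{1,r_1'}(\Omega)$ when $r_1' < \infty$, i.e.\ $r_1 > 1$. Thus the functional vanishes identically, in particular at $\psi = \varphi$, which proves \eqref{tr-1}. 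Equivalently, one may invoke the generalized Stokes formula \eqref{TF-stokes} with the test function $\varphi$, using $\div\u = 0$ and $\gamma_\cic{n}\u = 0$ from \eqref{TVFlemma}. The only genuinely delicate step is the admissibility of $\varphi = \v\cdot\w$ as a test function, namely the verification $\varphi \in W^{1,r_1'}(\Omega)$ carried out above; the borderline triplet $r_1 = 1$ falls outside this density argument and, if ever needed, would have to be handled separately.
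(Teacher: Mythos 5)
Your proof is correct and follows essentially the classical argument for this lemma, which the paper does not prove itself but quotes from \cite{TEM2}: H\"older's inequality with the triplet $(r_1,r_2,r_3)$ gives the continuous extension by the same integral formula, and the cancellation \eqref{tr-1} reduces via the distributional Leibniz rule to $(\u,\nabla(\v\cdot\cic{w}))_\Omega=0$ with $\v\cdot\cic{w}\in W^{1,r_1'}(\Omega)$, which follows from the defining property \eqref{TVF} by density of $\D(\overline\Omega)$ in $W^{1,r_1'}(\Omega)$ (equivalently, from the Stokes formula \eqref{TF-stokes} together with $\div\u=0$ and $\gamma_{\cic{n}}\u=0$ from \eqref{TVFlemma}). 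The borderline case $r_1=1$ that you flag closes easily and need not be excluded: there $r_2=r_3=\infty$, so $\varphi=\v\cdot\cic{w}$ is Lipschitz on $\overline\Omega$, and mollifying a Lipschitz extension of $\varphi$ to $\R^2$ one passes to the limit in $(\u,\nabla\varphi_\eps)_\Omega=0$ by dominated convergence, since $\nabla\varphi_\eps$ is uniformly bounded and converges almost everywhere while $\u\in\cic{L}^1(\Omega)$.
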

\begin{remark} \label{q431} This  remark, together with Remark \ref{q432} below,  motivates why we are mostly interested in the range    $p \in [\frac43,\infty\big)$. Define \begin{equation}
\label{sq} s(p)= \begin{cases} \frac{2p}{4-p}, & \frac43\leq p<2, \\ \mathrm{any }\; s \in [1,2) & p=2  \\ p & p>2  \end{cases}, \qquad z(p)= (s(p))'.\end{equation}
Assume $\u \in \cic{V}^{1,p}(\Omega)$. If $p\neq2$,  taking $r_1=p^*,r_2=p, r_3=z(p) $ in Lemma \ref{trilinear1}, and exploiting the   Sobolev embedding $\cic{V}^{1,p}(\Omega)\subset \cic{L}^{p^*}(\Omega)$ yields
\begin{equation} \label{convective}
\big\|\v\mapsto  b_\Omega(\u,\u,\v)\big\|_{\mathcal{L}( \cic{L}^{z(p) }(\Omega), \R)} \leq C_\Omega  \|\u\|_{\cic{V}^{1,p}(\Omega)}^2.
\end{equation}
By the Riesz representation theorem, moreover, it follows that  $(\cic{u}\cdot \nabla) \u \in \cic{L}^{s(p) }(\Omega)$ whenever $ p \in \big(\frac43,\infty\big)$. A similar argument gives the same result for any choice of $s(p) \in [1,2)$ when $p=$2.
 \end{remark}
Let $T>0$ be fixed. We will also make use of the functional
\begin{equation}
\label{tritime}
B_\Omega(\u,\v,\cic{w}) = \int_{0}^T b_\Omega (\u(t),\v(t),\cic{w}(t)) \, \d t
\end{equation}
initially defined on $\u,\v,\cic{w} \in \C^\infty([0,T] \times \overline{\Omega })$. In consequence of 
Lemma \ref{trilinear1} and H\"older's inequality, whenever $\{s_i\}_{i=1}^3,\{r_i\}_{i=1}^3$ are H\"older triplets, the estimates
\begin{equation}
|B_{\Omega}(\u,\v,\cic{w})|\leq \|\u\|_{L^{s_1}(0,T;\mathbf{L}^{r_1}_\tau(\Omega))}\|  \v\|_{L^{s_2}(0,T;\mathbf{W}^{r_2}(\Omega))} \|\cic{w}\|_{L^{s_3}(0,T;\mathbf{L}^{r_3}(\Omega))}\label{tr-time}
\end{equation}
hold for all $\u,\v,\cic{w} \in \C^\infty([0,T]\times \overline \Omega)$
 and thus $B_\Omega$ extends to a trilinear continuous form on the triple of spaces appearing in the right-hand side of \eqref{tr-time}. Analogously,
\begin{align} \nonumber
&B_{\Omega}(\u,\v,\cic{w})= -B_\Omega(\u,\cic{w},\v) \\ & \forall \u \in L^{s_1}(0,T;  \mathbf{L}^{r_1}_\tau(\Omega)), \, \v, \cic{w} \in { L^{s_2}(0,T;  \mathbf{W}^{1,r_2}(\Omega) )} \cap  L^{s_3}(0,T;\mathbf{L}^{r_3}(\Omega)) \label{tr-time2}
\end{align}

\subsubsection{Weak solutions}\label{DefWS} Let $p \in  \big[\frac43,\infty\big)$,
 $\u_0 \in \cic{V}^{1,p}(\Omega)$, ${\f} \in L^p(0,T;\cic{W}^{1,p}(\Omega))$ be given.
 A function $$\u:\Omega \times (0,T) \to \R^2$$ is a \emph{weak solution} to problem \eqref{P} if
\begin{align}
&\u \in L^\infty\big(0,T;\cic{V}^{1,p}(\Omega)\big) ; \label{DefWS1}\\
& \u \in \mathcal{C}\big([0,T];\cic{L}_\tau^{p}(\Omega)\big) \quad\textrm{and}\quad \cic{u}(0) = \u_0, \label{DefWS3} \\
\label{integralWS}
  &  -\int_0^T \big( \u (t), \v\psi'(t) \big)_{\Omega} \, \d t+ B_{\Omega}(\u,\u,\v \otimes \psi) =    \int_0^T\big({\f} (t),\v\psi(t)\big)_{ \Omega }\, \d t,
\end{align}
for all $\v \in \V$, and $   \psi \in \D(0,T).$
\begin{remark} Note that $\V$ is certainly not dense in $\cic{V}^{1,p} (\Omega)$. In fact, one can see that the closure of $\V$ in $\cic{V}^{1,p} (\Omega )$ is   $\cic{V}^{1,p} (\Omega)  \cap\cic{W}^{1,p}_0 (\Omega)$.
In a smooth domain $\Omega$, one can obtain  $\cic{V}^{1,p} (\Omega)$
as the closure of
$$
\W:=\{\v \in \C^\infty(\overline{\Omega}): \div \v =0 \textrm{  in  } \Omega, \, \v \cdot \cic{n}|_{\partial\Omega}=0\}=  \C^\infty(\overline{\Omega}) \cap \cic{V}^{1,p} (\Omega).
$$
This is not known to be true (and may very well be false) in a general convex domain. Ultimately, we are interested in testing the equation \eqref{integralWS} with functions of  $\cic{V}^{1,p} (\Omega)$, thus the choice of $\W$ as a test function space over $\V$ does not give any advantage. We explain how to circumvent this difficulty in Remark \ref{q432}.
\end{remark}
\begin{remark} \label{q432}Assume that $p \geq \frac43$ is such that the embedding $\cic{V}^{1,p}(\Omega)\hookrightarrow \cic{W}^{1,p}(\Omega)$ holds,  and $\u$ is a weak solution of \eqref{P}.
Note that each of the three functionals
$$
\v \mapsto \int_0^T \big( \u  , \v\psi'  \big)_{\Omega},  \quad \v \mapsto\int_0^T\big({\f}  ,\v\psi \big)_{ \Omega },  \quad \v \mapsto B_{\Omega}(\u,\u,\v \otimes \psi),
$$
is linear and continuous on $\cic{L}_\tau^{z(p)}(\Omega)$: the only nontrivial verification is  that, thanks to \eqref{convective} and the embedding 
\begin{align*}
|B_{\Omega}(\u,\u,\v \otimes \psi)| & \leq C_\Omega \|\u\|_{L^\infty(0,T;\cic{L}^{p}_\tau(\Omega))}  \|\u\|_{L^p(0,T;\cic{W}^{1,p} (\Omega))} \|\v \otimes \psi\|_{{L^{p'}(0,T;\cic{L}^{z(p)}_\tau(\Omega))} } \\ & \leq C \|\u\|_{L^\infty(0,T;\cic{V}^{1,p}(\Omega))}^2  \|\psi\|_{L^{p'}(0,T)}  \| \v\|_{\cic{L}_\tau^{z(p)}(\Omega)}.
\end{align*}
This shows that \eqref{integralWS} makes sense for  $\u $ as in \eqref{DefWS1}.
By density of $\V$ in $\cic{L}^{z(p)}_\tau(\Omega), $ the equality \eqref{integralWS} actually holds for all $\v \in \cic{L}^{z(p)}_\tau(\Omega) $.
Further observe that   $p \geq\frac32$ ensures the continuity of the embedding
$ \cic{W}^{1,p}(\Omega) \hookrightarrow
\cic{L}^{z(p)}(\Omega).
$
Hence, when $p\geq \frac32$, it is legitimate to take $\v \in  \cic{V}^{1,p}(\Omega) $ as a test function in \eqref{integralWS}.
\end{remark}
We summarize the construction of a weak solution to the Euler system \eqref{P} in the proposition below.\begin{proposition} \label{ThEx} Let $\Omega \subset \R^2$ be a bounded convex domain,    $p \in \big[\frac43,2\big] $, and let   $\u_0 \in \cic{V}^{1,p}(\Omega)$, ${\f} \in L^p(0,T;\cic{W}^{1,p}(\Omega))$ be given. 
Then Problem \eqref{P} has a weak solution $\u=\u(x,t)   \in L^{\infty}\big(0,T; \cic{V}^{1,p}(\Omega)\big)$ in the sense of  \eqref{DefWS1}-\eqref{integralWS}, with the further regularity
\begin{align}\label{Sol-regu}  
&\cic{u}  \in  \mathcal{C}\big([0,T]; \cic{L}^2_\tau(\Omega)\big)  , \qquad \u(t) \in   \cic{V}^{1,p}(\Omega)  \;\;\forall t \in [0,T],  \\ 
& \curl \u \in \mathcal{C}\big([0,T]; \mathrm{w}-{L}^p(\Omega)\big), \label{vort-reg}
\\
&  \label{Sol-reg} \pt \u \in  L^{p}\big( 0,T ; \cic{L}^{s(p)}(\Omega) ).
\end{align}
 \end{proposition}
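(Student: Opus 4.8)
The plan is to realize $\u$ as a limit of classical solutions $\u_n$ of the Euler system on the smooth convex subdomains $\Omega_n$ of \eqref{gammacv}, exploiting crucially that the uniform Lipschitz character \eqref{gammacv2} renders \emph{all} the elliptic constants---those in Theorem \ref{ercvx} and in Theorem \ref{helmthm}, and the Sobolev/trace constants---independent of $n$. First I would prepare the data. Setting $\omega_0:=\curl\u_0\in L^p(\Omega)$, I take smooth $\omega_{0,n}$ with $\supp\omega_{0,n}\subset\Omega_n$ and $\omega_{0,n}\to\omega_0$ in $L^p(\Omega)$, and reconstruct $\u_{0,n}:=\mathbf{K}_{\Omega_n}\omega_{0,n}=\nabla^\perp\mathrm{G}_{\Omega_n}\omega_{0,n}$, which is smooth up to $\partial\Omega_n$ and tangential. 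By the $\Gamma$-convergence Lemma \ref{lemmaconvcvx}, $\nabla^\perp\widetilde{\mathrm{G}_{\Omega_n}\omega_{0,n}}\to\nabla^\perp\mathrm{G}_\Omega\omega_0=\mathbf{K}_\Omega\omega_0$ in $\cic{L}^2(\Omega)$, and since $\Omega$ is convex, hence simply connected, Lemma \ref{nablaperplemma} forces $\mathbf{K}_\Omega\omega_0=\u_0$; thus $\u_{0,n}\to\u_0$ in $\cic{L}^2$. I regularize $\mathbf{f}$ similarly to smooth $\mathbf{f}_n$ supported in $\Omega_n$ with $\mathbf{f}_n\to\mathbf{f}$ in $L^p(0,T;\cic{W}^{1,p}(\Omega))$. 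Classical two-dimensional theory on the smooth bounded $\Omega_n$ (Yudovich--Kato) then furnishes a global smooth solution $\u_n$ of $(\mathrm{P}_n)$.

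The second step is a set of $n$-uniform a priori bounds. The energy identity $\frac{\d}{\d t}\frac12\|\u_n\|_{\cic{L}^2(\Omega_n)}^2=(\mathbf{f}_n,\u_n)_{\Omega_n}$ controls $\|\u_n\|_{L^\infty(0,T;\cic{L}^2)}$, while the vorticity transport $\pt\omega_n+\u_n\cdot\nabla\omega_n=\curl\mathbf{f}_n$ along the measure-preserving flow of the tangential divergence-free field $\u_n$ gives $\frac{\d}{\d t}\|\omega_n\|_{L^p}\leq\|\curl\mathbf{f}_n\|_{L^p}$, hence controls $\|\omega_n\|_{L^\infty(0,T;L^p)}$. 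Feeding these into \eqref{epest1}--\eqref{embedding-1q2} (with $n$-uniform constants by \eqref{gammacv2}) and using $\cic{L}^2(\Omega)\hookrightarrow\cic{L}^p(\Omega)$ bounds $\u_n$ in $L^\infty(0,T;\cic{V}^{1,p}(\Omega_n))$, hence in $L^\infty(0,T;\cic{W}^{1,p}(\Omega_n))$ and in $L^\infty(0,T;\cic{L}^{p^*}(\Omega_n))$. For the time derivative I write $\pt\u_n=P_{\Omega_n}\big(\mathbf{f}_n-(\u_n\cdot\nabla)\u_n\big)$; Remark \ref{q431} and \eqref{convective} place $(\u_n\cdot\nabla)\u_n$ in $\cic{L}^{s(p)}$ with norm $\lesssim\|\u_n\|_{\cic{V}^{1,p}}^2$, and the uniform boundedness of the Leray projectors from Theorem \ref{helmthm} yields an $n$-uniform bound for $\pt\u_n$ in $L^p(0,T;\cic{L}^{s(p)}(\Omega_n))$.

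For compactness I extend $\u_n$ and $\omega_n$ by zero to $\Omega$, keeping the names $\u_n,\omega_n$; then $\u_n\in\cic{L}^2_\tau(\Omega)$ and $\omega_n$ is bounded in $L^\infty(0,T;L^p(\Omega))$. On each compact $K\Subset\Omega$ (so $K\subset\Omega_n$ for large $n$) the uniform $\cic{W}^{1,p}$ bound together with the uniform $\pt\u_n$ bound lets Aubin--Lions--Simon give strong compactness in $\mathcal C([0,T];\cic{L}^2(K))$, via $\cic{W}^{1,p}(K)\hookrightarrow\hookrightarrow\cic{L}^2(K)\hookrightarrow\cic{L}^{s(p)}(K)$, which is legitimate since $p^*\geq4>2$ on all of $[\frac43,2]$. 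A diagonal argument over an exhaustion of $\Omega$ produces a limit $\u$ with local strong convergence, and I upgrade to global strong convergence in $\mathcal C([0,T];\cic{L}^2(\Omega))$ by controlling the tails: the uniform $\cic{L}^{p^*}$ bound and H\"older give $\|\u_n\|_{\cic{L}^2(\Omega\setminus K)}\lesssim|\Omega\setminus K|^{\frac12-\frac1{p^*}}$, small uniformly in $n$ and $t$ as $K\uparrow\Omega$. Passing to weak-$*$ limits and testing $\curl$ against $\varphi\in\D(\Omega)$ identifies $\curl\u=\omega\in L^\infty(0,T;L^p)$, while closedness of $\cic{L}^2_\tau(\Omega)$ under strong $\cic{L}^2$ limits gives $\u\in\cic{L}^2_\tau$; together these yield $\u(t)\in\cic{V}^{1,p}(\Omega)$ and \eqref{DefWS1}.

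Finally I pass to the limit in the weak formulation. For $\v\in\V$ and $n$ large, $\v$ is admissible for $(\mathrm{P}_n)$ and all integrands are supported in $\Omega_n$, so the weak form of $(\mathrm{P}_n)$ reads exactly as \eqref{integralWS} with $\u_n,\mathbf{f}_n$; the linear terms converge by the strong $\cic{L}^2$ convergence of $\u_n$ and by $\mathbf{f}_n\to\mathbf{f}$, and the trilinear term is handled by rewriting $B_\Omega(\u_n,\u_n,\v\otimes\psi)=-B_\Omega(\u_n,\v\otimes\psi,\u_n)$ via \eqref{tr-1} as an integral of $\u_n\otimes\u_n$ against the smooth $\nabla(\v\psi)$, which converges because $\u_n\to\u$ strongly in $\cic{L}^2$. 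The continuity $\u\in\mathcal C([0,T];\cic{L}^2_\tau)$ and $\u(0)=\u_0$ follow from the uniform convergence in $\mathcal C([0,T];\cic{L}^2)$ and $\u_{0,n}\to\u_0$; the weak continuity $\curl\u\in\mathcal C([0,T];\mathrm{w}\text{-}L^p(\Omega))$ follows from $\u\in\mathcal C([0,T];\cic{L}^2)$, hence $\curl\u\in\mathcal C([0,T];H^{-1})$, combined with the $L^\infty_t L^p_x$ bound through the standard weak-continuity lemma; and \eqref{Sol-reg} is inherited from the uniform $\pt\u_n$ bound, with $\pt\u=P_\Omega(\mathbf{f}-(\u\cdot\nabla)\u)$. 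The main obstacle I anticipate is precisely the up-to-the-boundary strong $\cic{L}^2$ compactness across the \emph{moving} domains $\Omega_n$---marrying interior Aubin--Lions with the uniform tail control---together with verifying that every elliptic and embedding constant is genuinely $n$-uniform, which is exactly what \eqref{gammacv2} secures.
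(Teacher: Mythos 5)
Your proposal is correct and follows the paper's overall scheme---exhaustion by smooth convex subdomains with the uniform Lipschitz character of \eqref{gammacv2}, vorticity mollification with Biot--Savart reconstruction of the approximate data, Kato's classical solutions of \eqref{Pn}, the transport bound on $\omega^{(n)}$, and the uniform Leray-projector bound of Theorem \ref{helmthm} to control $\pt\u^{(n)}$ in $L^p(0,T;\cic{L}^{s(p)})$ (the paper phrases this step through $\w^{(n)}=\nabla\Psi^{(n)}$ and $P^\perp_{\Omega_n}$, you project the momentum equation directly with $P_{\Omega_n}$; these are equivalent). Where you genuinely diverge is the compactness mechanism. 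The paper compactifies the \emph{vorticity}: it proves equicontinuity of $t\mapsto(\widetilde{\omega^{(n)}}(t),\psi)_\Omega$ (Lemma \ref{eqvort}, where the restriction $p\geq\frac43$ enters via $\cic{V}^{1,p}\hookrightarrow\cic{L}^{p'}$), applies Ascoli--Arzel\`a in $\C([0,T];\mathrm{w}\text{-}L^p(\Omega))$ using metrizability of the weak topology on balls, and then transfers to the velocity by applying the $\Gamma$-convergence Lemma \ref{lemmaconvcvx} at each fixed time, defining $\u:=\mathbf{K}_\Omega\omega$; this yields \eqref{vort-reg} and $\u(t)\in\cic{V}^{1,p}(\Omega)$ for every $t$ immediately, and entirely sidesteps Aubin--Lions on moving domains. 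You instead compactify the \emph{velocity}: interior Aubin--Lions--Simon on compacta $K\Subset\Omega$ (legitimate, since $\pt\u^{(n)}$ is bounded in $L^p$ with $p>1$ and $\cic{W}^{1,p}(K)\hookrightarrow\hookrightarrow\cic{L}^2(K)$ as $p^*\geq4$), a diagonal argument, and the uniform $\cic{L}^{p^*}$ tail bound to upgrade to $\C([0,T];\cic{L}^2(\Omega))$, recovering \eqref{vort-reg} afterwards by the standard lemma combining $\C([0,T];H^{-1})$ continuity with the $L^\infty_tL^p_x$ bound. Your route buys a more self-contained argument (no metrizability discussion, and Lemma \ref{lemmaconvcvx} is used only once, to prepare the data $\u_0^{(n)}\to\u_0$), at the cost of the tail/diagonal bookkeeping which you correctly supply; the paper's route buys the everywhere-in-time statements for free. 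Two small points of hygiene: your appeal to Lemma \ref{nablaperplemma} to conclude $\mathbf{K}_\Omega\omega_0=\u_0$ gives only $\curl(\mathbf{K}_\Omega\omega_0)=\omega_0=\curl\u_0$; you additionally need that a curl-free field in $\cic{L}^2_\tau(\Omega)$ vanishes on the simply connected $\Omega$ (you gesture at this, and the paper uses the same fact silently in \eqref{initialdata-u}). And at the endpoint $p=\frac43$ one has $s(p)=1$, so the Leray bound on $\cic{L}^{s(p)}$ sits outside the stated range of Theorem \ref{helmthm}---but this fine print is shared verbatim by the paper's Lemma \ref{apder}, so it is not a defect of your argument relative to the original.
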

 \begin{remark}Proposition \ref{ThEx} is a slight improvement of \cite[Proposition 7.1]{Tay}. {We also mention that the mere existence of a weak solution $\u  \in L^{\infty}\big(0,T; \cic{W}^{1,p}(\Omega)\big)$, without the additional regularity \eqref{Sol-regu}-\eqref{Sol-reg},  can be obtained as a consequence of \cite[Theorem 1]{GVA} and of the elliptic regularity Theorem \ref{ercvx}}. Our  proof follows the same general idea  (also employed in \cite{GVA})  of constructing  a weak solution to \eqref{P} on $\Omega$ as a limit of smooth solutions on a sequence of approximating domains.  Our modifications to the scheme  in \cite{Tay} allow us to extend the range of exponents from $\big(\frac{12}{7},2\big]$ to $\big[\frac43,2\big]$, as well as to obtain the additional in-time regularity of \eqref{Sol-regu}-\eqref{vort-reg}, which is not present in either \cite{Tay} or \cite{GVA}.  {Note that Proposition \ref{ThEx} will be completed in Theorem \ref{ThEx1}, where we re-introduce the pressure $\pi$, in the case $\frac43<p\leq2$.} \end{remark}
\subsection{Proof of Proposition \ref{ThEx}} \label{s4} \noindent
We assume throughout the proof that $\Omega$ is a bounded convex domain, $\frac43\leq p\leq 2$, and
\begin{equation}
\label{eqR}
  \u_0 \in {\cic{V}^{1,p}(\Omega)}, \; {\f} \in L^p(0,T; {\cic{W}^{1,p}(\Omega)}), \quad
  R:=\Big(\|\u_0\|^p_{{\cic{V}^{1,p}(\Omega)}}+\|{\f}\|_{ L^p(0,T; {\cic{W}^{1,p}(\Omega))}}^p\Big)^{\frac1p}  .
\end{equation} 
The symbol  $\Q$ will stand for a positive increasing function of its argument, depending only on $p$, $T$ and  $\Omega$, which can  be explicitly computed 
and is allowed to vary from line to line. Also, all the constants implied by the almost inequality signs $(\lesssim)$ are allowed to depend on $p$ and $\Omega$ without 
explicit mention.
 The proof will proceed through several steps.
\subsubsection{Approximation of the domain and of the data} \label{ss41p} We approximate $\Omega$ by a   sequence of smooth convex domains $\Omega_n$ as in 
 \eqref{gammacv}. As discussed in Section 2, the implicit constants in the Sobolev embeddings for  $\Omega_n$, as well as the $\cic{L}^{p}(\Omega_n)$ 
 norm of the Leray projector $P_{\Omega_n}$ are uniform in $n$ and depend only on  (the Lipschitz character of)   $\Omega$. Our use of almost-inequalities 
 and of the function $\Q(\cdot)$ will reflect this uniformity.

  We turn to the approximation of the data in \eqref{P}.
Let  $\omega_0=\curl \u_0$. We set
 \begin{align} &
\omega_0^{(n)}(x)=  \Big(\omega_{0} \cic{1}_{\Omega_{n-1}} \Big)*\phi^{2}_{\eps_n}  (x),\qquad x \in \Omega, \label{approxdata}\\ & {\f}^{(n)}(x,t) = \Big(\big(  {\f}  \cic{1}_{\Omega_{n-1}x}  \cic{1}_{(\eps_n, T-\eps_n)t} \Big)*\big(\phi^{2}_{\eps_n}\otimes \phi^{1}_{\eps_n}   \big)(x,t), \quad x \in \Omega, \, t \in [0,T],
\end{align}
with $\eps_n>0$, $\eps_n \to 0$ chosen small enough to have \begin{equation} \label{omegasupport}\supp \omega_0^{(n)}\Subset \Omega_n ,\qquad \supp {\f}^{(n)}  \Subset \Omega_n \times (0,T).
\end{equation} Note that
 \begin{align}\label{convinomega} & \omega_0^{(n)} \in \D(\Omega_n), \qquad
\|\omega_0^{(n)}\|_{ L^p(\Omega)}\leq \|\omega_0 \|_{ L^p(\Omega)},\qquad \widetilde{ \omega_0^{(n)} }\to \omega_0 \; \textrm{in}\; L^p(\Omega), \\ &
{\f}^{(n)}  \in \D(\Omega_n \times( 0,T)),  \quad
\|{\f}^{(n)}\|_{ L^p(0,T; \cic{W}^{1,p}(\Omega_n))}\leq\|  {\f} \|_{ L^p(0,T; \cic{W}^{1,p}(\Omega))},\\ &\|\f^{(n)}-\f\|_{ L^p(0,T; \cic{W}^{1,p}(\Omega_n))} \to 0, \qquad n \to \infty. \label{convinf}
\end{align}
\begin{lemma} Define
 \label{lemmainitialu}
$$ \u_0^{(n)}: \Omega_n \to \R^2, \qquad
 \u_0^{(n)} = \mathbf{K}_{\Omega_n} \omega_0^{(n)}.
 $$
 We then have
 \begin{align}\label{smoothid1} &\u^{(n)}_0 \in \C^\infty(\overline{\Omega}_n, \R^2),  \\
&  \nabla \cdot  \u^{(n)}_0 =0  \;\textrm{ \emph{in} }\;  \Omega_n, \quad \u^{(n)}_0 \cdot \cic{n}|_{\partial \Omega_n}=0;  \label{smoothid2} \\
 &
\|\u_0^{(n)}\|_{ \cic{W}^{1,p}(\Omega_n)}\leq \Q(R).  \label{convinf1}
\end{align}
\end{lemma}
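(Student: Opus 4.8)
The plan is to prove the three assertions in turn, relying on the properties of the Biot--Savart operator $\mathbf{K}_{\Omega_n}=\nabla^\perp\circ\,\mathrm{G}_{\Omega_n}$ already established, the only genuinely delicate point being the \emph{uniformity in $n$} of the constants, which is provided by the uniform Lipschitz construction \eqref{gammacv2}.

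For the smoothness \eqref{smoothid1}, I would note that $\omega_0^{(n)}\in\D(\Omega_n)$ is smooth and compactly supported inside $\Omega_n$, while $\partial\Omega_n$ is smooth. The classical theory of elliptic equations for the Dirichlet problem on a smooth domain (bootstrapping the $L^p$--$W^{2,p}$ estimates recalled from \cite{Gilbarg} to all orders and using Sobolev embedding) yields $\mathrm{G}_{\Omega_n}\omega_0^{(n)}\in\C^\infty(\overline{\Omega}_n)$. Since $\nabla^\perp$ preserves smoothness up to the boundary, $\u_0^{(n)}=\nabla^\perp\mathrm{G}_{\Omega_n}\omega_0^{(n)}\in\C^\infty(\overline{\Omega}_n,\R^2)$.

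For the tangentiality \eqref{smoothid2}, I would simply invoke \eqref{tangential-K} with $\Omega$ replaced by $\Omega_n$ and $f=\omega_0^{(n)}\in H^{-1}(\Omega_n)$: the identity $\div\nabla^\perp=0$ gives $\div\u_0^{(n)}=0$, and the integration-by-parts computation carried out in the proof of \eqref{K-boundweak}--\eqref{K-boundstrong} gives $\gamma_{\cic{n}}\u_0^{(n)}=0$. Because the previous step already guarantees that $\u_0^{(n)}$ is smooth up to the boundary, its generalized normal trace coincides with the pointwise restriction, so $\u_0^{(n)}\cdot\cic{n}|_{\partial\Omega_n}=0$ holds classically. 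For the bound \eqref{convinf1}, I would apply the strong Biot--Savart estimate \eqref{K-boundstrong} on $\Omega_n$ (legitimate since $1<p\leq 2$), obtaining $\|\u_0^{(n)}\|_{\cic{W}^{1,p}(\Omega_n)}\lesssim\|\omega_0^{(n)}\|_{L^p(\Omega_n)}$, and then use \eqref{convinomega} together with $\|\omega_0\|_{L^p(\Omega)}\leq\|\u_0\|_{\cic{V}^{1,p}(\Omega)}\leq R$ (by \eqref{eqR}) to conclude $\|\u_0^{(n)}\|_{\cic{W}^{1,p}(\Omega_n)}\leq\Q(R)$.

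The main obstacle is not any of the identities themselves---divergence-freeness, the vortex identity \eqref{nablaperp}, and the vanishing of the normal trace were all recorded earlier---but rather the uniformity in $n$ of the elliptic constant entering \eqref{convinf1}. The whole scheme would collapse if the operator norm of $\mathbf{K}_{\Omega_n}$ as a map $L^p(\Omega_n)\to\cic{W}^{1,p}(\Omega_n)$ were allowed to degenerate as $\Omega_n\uparrow\Omega$; it is precisely the uniform Lipschitz property \eqref{gammacv2} that keeps the constant $C_{p,\Omega_n}$ of Theorem \ref{ercvx} bounded independently of $n$ (as opposed to a naive smooth-domain estimate, whose constant would involve the boundary curvature and could blow up). This uniformity is the point I would emphasize, and it is exactly what the careful construction in Subsection \ref{approxdom} was designed to supply.
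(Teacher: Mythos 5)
Your proposal is correct and follows essentially the same route as the paper, whose proof is simply ``Immediate from \eqref{convinomega} and Theorem \ref{ercvx}'': you use \eqref{convinomega} for the data bound, Theorem \ref{ercvx} via \eqref{K-boundstrong} (with the uniform-in-$n$ constant from \eqref{gammacv2}) for \eqref{convinf1}, classical smooth-domain elliptic bootstrapping for \eqref{smoothid1}, and the already-established identities \eqref{tangential-K} for \eqref{smoothid2}. Your elaboration of the uniformity in $n$ is exactly the point the paper's construction in Subsection \ref{approxdom} was designed to guarantee, so no gap remains.
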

\begin{proof}Immediate from \eqref{convinomega} and Theorem \ref{ercvx}.
\end{proof}

\subsubsection{The approximating problems and \emph{a priori} estimates} \label{ss42p}
We   consider the following evolution problem:
\begin{equation} \label{Pn}  \tag{P$_n$}
\begin{cases}
\pt \cic{u}(x,t)  + (\u \cdot \nabla) \cic{u}(x,t)+ \nabla \pi(x,t) = {\f}^ {(n)}(x,t), & x\in \Omega_n, t \in (0,T) \\
\nabla \cdot \cic{u} (x,t) =0 & x\in \Omega_n, t \in (0,T) \\
\u\cdot \mathbf{n} (x,t)=0, &  x\in \partial\Omega_n, t \in (0,T)\\
\u(x,0)=\u_0^{(n)}(x) & x \in \Omega_n
\end{cases}
\end{equation}
Due to the smoothness  \eqref{smoothid1} of the data $\cic{u}_0^{(n)}, {\f}^{(n)}$ and of the domain $\Omega_n$, the classical result of  Kato 
\cite{Kato} (see also \cite{Bardos,Bardos2}) yields the existence of
 a unique classical solution of \eqref{Pn}:  $$(\u^{(n)},\pi^{(n)}) \in \C^{\infty}(\overline{\Omega_n} \times [0,T] )^2 \times \C^{\infty}(\overline{\Omega_n} 
 \times [0,T]).$$
 Taking $\curl$ of the equation in \eqref{Pn}, we obtain the following equation for $\omega^{(n)}$:
\begin{equation} \label{vortPn}
\begin{cases}
\pt \omega^{(n)}(x,t)  + (\u^{(n)} \cdot \nabla) \omega^{(n)}(x,t)  = g^{(n)}(x,t), & x\in \Omega_n, t \in (0,T), \\
\omega^{(n)}(x,0)=\omega_0^{(n)}(x) & x \in \Omega_n,
\end{cases}
\end{equation}
where $g^{(n)}= \curl{\f}^{(n)}.$
We now derive \emph{a priori} estimates  on the smooth solutions $\cic{u}^{(n)}$ via the fundamental estimate on the vorticity $\omega^{(n)} $ in the lemma below.
\begin{lemma} \label{apvort} Let $R$ be as in \eqref{eqR}.
The following estimate holds:
$$
  \|\omega^{(n)} \| _{L^\infty(0,T;L^{p}(\Omega_n))}    \leq  e^{T} R.
$$
\end{lemma}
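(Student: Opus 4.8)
The goal is the uniform bound $\|\omega^{(n)}\|_{L^\infty(0,T;L^p(\Omega_n))} \leq e^T R$, and the natural strategy is the standard $L^p$ energy estimate for the vorticity transport equation \eqref{vortPn}, exploiting crucially that the transporting velocity $\u^{(n)}$ is divergence-free and tangential to $\partial\Omega_n$. The plan is to multiply the equation $\pt\omega^{(n)} + (\u^{(n)}\cdot\nabla)\omega^{(n)} = g^{(n)}$ by $|\omega^{(n)}|^{p-2}\omega^{(n)}$ and integrate over $\Omega_n$. The first term produces $\frac1p\ddt\|\omega^{(n)}(t)\|_{L^p(\Omega_n)}^p$, as expected.

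The decisive step is that the transport term vanishes: writing $(\u^{(n)}\cdot\nabla)\omega^{(n)}\,|\omega^{(n)}|^{p-2}\omega^{(n)} = \frac1p\,\u^{(n)}\cdot\nabla(|\omega^{(n)}|^p)$, we integrate by parts and use $\div\u^{(n)}=0$ together with the boundary condition $\u^{(n)}\cdot\cic{n}|_{\partial\Omega_n}=0$ from \eqref{smoothid2}, so that
\begin{equation*}
\int_{\Omega_n} \u^{(n)}\cdot\nabla\big(|\omega^{(n)}|^p\big) = -\int_{\Omega_n}(\div\u^{(n)})|\omega^{(n)}|^p + \int_{\partial\Omega_n}(\u^{(n)}\cdot\cic{n})|\omega^{(n)}|^p = 0.
\end{equation*}
This identity is legitimate because all the data are smooth up to $\overline{\Omega_n}\times[0,T]$ by \eqref{smoothid1}. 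The remaining term is $\int_{\Omega_n} g^{(n)}|\omega^{(n)}|^{p-2}\omega^{(n)}$, which by H\"older's inequality is bounded by $\|g^{(n)}(t)\|_{L^p(\Omega_n)}\|\omega^{(n)}(t)\|_{L^p(\Omega_n)}^{p-1}$.

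Setting $y(t) := \|\omega^{(n)}(t)\|_{L^p(\Omega_n)}$, I arrive at the differential inequality $\ddt y(t)^p \leq p\,\|g^{(n)}(t)\|_{L^p(\Omega_n)}\,y(t)^{p-1}$, which simplifies to $\ddt y(t) \leq \|g^{(n)}(t)\|_{L^p(\Omega_n)}$ wherever $y(t)>0$. Integrating in time yields
\begin{equation*}
y(t) \leq y(0) + \int_0^t \|g^{(n)}(\tau)\|_{L^p(\Omega_n)}\,\d\tau \leq \|\omega_0^{(n)}\|_{L^p(\Omega_n)} + \|g^{(n)}\|_{L^1(0,T;L^p(\Omega_n))}.
\end{equation*}
To close, I invoke the uniform control on the data: from \eqref{convinomega} one has $\|\omega_0^{(n)}\|_{L^p(\Omega)}\leq\|\omega_0\|_{L^p(\Omega)}\leq R$, and since $g^{(n)}=\curl\f^{(n)}$ is controlled by $\|\f^{(n)}\|_{L^p(0,T;\cic{W}^{1,p}(\Omega_n))}\leq\|\f\|_{L^p(0,T;\cic{W}^{1,p}(\Omega))}$ via \eqref{convinomega}, both pieces are bounded in terms of $R$. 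The only cosmetic point is that the definition of $R$ in \eqref{eqR} aggregates the data in an $\ell^p$-in-time norm, so passing from $L^1(0,T)$ to the $L^p$-in-time norm of $\f$ costs a H\"older factor $T^{1/p'}$; the factor $e^T$ in the statement comfortably absorbs this together with the constants, and indeed a cruder Gr\"onwall argument (keeping the $y^{p-1}$ on the right and not dividing) reproduces the exponential $e^T R$ directly. The main (and only genuine) obstacle is the justification of the integration by parts producing the vanishing transport term; everything else is routine once smoothness up to the boundary and the tangency condition \eqref{smoothid2} are in hand.
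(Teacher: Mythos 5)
Your proposal is correct and follows essentially the same route as the paper: multiply \eqref{vortPn} by $|\omega^{(n)}|^{p-2}\omega^{(n)}$, kill the transport term by integration by parts using $\div \u^{(n)}=0$ and $\u^{(n)}\cdot\cic{n}|_{\partial\Omega_n}=0$ (legitimate by the smoothness \eqref{smoothid1}), and control the forcing via the data bounds \eqref{convinomega}--\eqref{convinf}. The only (immaterial) difference is in closing the estimate: the paper applies Young's inequality and Gr\"onwall to $\ddt\|\omega^{(n)}\|_{L^p}^p$, while you divide by $y^{p-1}$ and absorb the resulting H\"older factor $(1+T)^{1/p'}\leq e^T$ into the exponential, which yields the same bound $e^TR$.
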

\begin{proof}
Multiplying \eqref{vortPn}by $p \omega^{(n)} |\omega^{(n)} |^{p-2}$, and integrating on $\Omega_n$, we obtain 
\begin{align*}
\ddt \|\omega^{(n)}(t)\|_{L^p(\Omega_n)}^p &= p (g^{(n)}(t), \omega^{(n)} |\omega^{(n)} |^{p-2} )_{\Omega_n} \leq \|  g^{(n)}(t)\|_{L^p(\Omega_n)}^p + (p-1)\|\omega^{(n)}(t)\|_{L^p(\Omega_n)}^p \\ &\leq   \|  g^{(n)}(t)\|_{L^p(\Omega_n)}^p +  (p-1)\|\omega^{(n)}(t)\|_{L^p(\Omega_n)}^p,
\end{align*}
so that the desired conclusion  follows from the Gronwall lemma and \eqref{convinomega}-\eqref{convinf}. We used that
$$p\big((\u^{(n)} \cdot \nabla) \omega^{(n)}  ,\omega^{(n)} |\omega^{(n)} |^{p-2} \big)_{\Omega_n} = \big(\u^{(n)}, \nabla|\omega^{(n)}|^p \big)_{\Omega_n}=0
$$
which stems from $\u^{(n)}$ being divergence free and with zero normal component on $\partial \Omega_n$.
\end{proof}  
Via an application of \eqref{K-boundstrong}, Lemma \ref{apvort}  entails the following a priori estimate on the solutions $\u^{(n)}$:
\begin{equation}
\label{preliminaryu}
  \|\u^{(n)} \|_{L^\infty(0,T;\cic{V}^{1,p}(\Omega_n))}    \leq \Q(R).
\end{equation}
We then derive an equicontinuity result for the vorticity.
\begin{lemma} \label{eqvort}
For each $\psi \in L^{p'}(\Omega), $ the sequence of  functions
$$
t \in [0,T] \mapsto \big( \widetilde {\omega^{(n)}}(t), \psi\big)_{\Omega},
$$
is equicontinuous on $[0,T]$.
\end{lemma}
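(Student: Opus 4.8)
The plan is to establish a \emph{uniform} (in $n$) modulus of continuity for the maps $t\mapsto \big(\widetilde{\omega^{(n)}}(t),\psi\big)_\Omega$, which is precisely equicontinuity. Since each $\omega^{(n)}(t)$ is supported in $\Omega_n$, I would first rewrite $\big(\widetilde{\omega^{(n)}}(t),\psi\big)_\Omega=\big(\omega^{(n)}(t),\psi\big)_{\Omega_n}$, so that the whole computation can be carried out on $\Omega_n$. The main obstacle is that $\psi$ is merely in $L^{p'}(\Omega)$ and therefore cannot be paired against the gradient that will arise from the convective term; the remedy is to approximate $\psi$ by a smooth function and to absorb the resulting error by means of the uniform vorticity bound of Lemma \ref{apvort}.

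Concretely, fix $\eps>0$. Using the density of $\D(\Omega)$ in $L^{p'}(\Omega)$ (legitimate since $p\leq 2$ forces $p'<\infty$), I would choose $\psi_\eta\in\D(\Omega)$ with $2\e^{T}R\,\|\psi-\psi_\eta\|_{L^{p'}(\Omega)}<\frac{\eps}{2}$. Writing $\psi=(\psi-\psi_\eta)+\psi_\eta$ and invoking Lemma \ref{apvort}, the rough part of the increment is controlled uniformly,
$$
\big|\big(\omega^{(n)}(t)-\omega^{(n)}(s),\,\psi-\psi_\eta\big)_{\Omega_n}\big|\leq 2\e^{T}R\,\|\psi-\psi_\eta\|_{L^{p'}(\Omega)}<\tfrac{\eps}{2}, \qquad \forall\, n,\,t,\,s.
$$

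For the smooth part, since $\omega^{(n)}\in\C^\infty(\overline{\Omega_n}\times[0,T])$, I would write $\big(\omega^{(n)}(t)-\omega^{(n)}(s),\psi_\eta\big)_{\Omega_n}=\int_s^t \big(\pt\omega^{(n)}(\tau),\psi_\eta\big)_{\Omega_n}\,\d\tau$ and substitute the vorticity equation \eqref{vortPn}, rewriting the transport term as $(\u^{(n)}\cdot\nabla)\omega^{(n)}=\div(\omega^{(n)}\u^{(n)})$ thanks to $\div\u^{(n)}=0$. Integrating by parts (legitimate as $\overline{\Omega_n}\subset\Omega$ and $\psi_\eta$ is smooth there), the boundary term $\int_{\partial\Omega_n}\psi_\eta\,\omega^{(n)}\,(\u^{(n)}\cdot\cic{n})$ vanishes because $\u^{(n)}\cdot\cic{n}=0$ on $\partial\Omega_n$, leaving
$$
\big(\pt\omega^{(n)}(\tau),\psi_\eta\big)_{\Omega_n}=\big(g^{(n)}(\tau),\psi_\eta\big)_{\Omega_n}+\big(\omega^{(n)}(\tau)\,\u^{(n)}(\tau),\,\nabla\psi_\eta\big)_{\Omega_n}.
$$
The forcing term is handled by H\"older in time, $\int_s^t|(g^{(n)},\psi_\eta)_{\Omega_n}|\,\d\tau\leq R\,\|\psi_\eta\|_{L^{p'}(\Omega)}\,|t-s|^{1/p'}$, using $\|g^{(n)}\|_{L^p(0,T;L^p)}\leq R$.

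The heart of the matter is the convective term, and this is where the hypothesis $p\geq\frac43$ enters. By Lemma \ref{apvort} one has $\omega^{(n)}\in L^p(\Omega_n)$, while \eqref{preliminaryu}, the embedding \eqref{embedding-1q2} and the (uniform, by \eqref{gammacv2}) Sobolev embedding $\cic{W}^{1,p}\hookrightarrow\cic{L}^{p^*}$ give $\u^{(n)}\in\cic{L}^{p^*}(\Omega_n)$; their product then lies in $\cic{L}^{s(p)}(\Omega_n)$ with $s(p)=\frac{2p}{4-p}$ as in \eqref{sq}, and $s(p)\geq 1$ \emph{precisely} when $p\geq\frac43$ (for $p=2$ one instead uses $\cic{W}^{1,2}\hookrightarrow\cic{L}^q$ for large finite $q$ and any $s(2)\in[1,2)$). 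Pairing against $\nabla\psi_\eta\in\cic{L}^{z(p)}(\Omega)$, which is finite because $\psi_\eta$ is smooth on the bounded set $\Omega$, yields $\int_s^t|(\omega^{(n)}\u^{(n)},\nabla\psi_\eta)_{\Omega_n}|\,\d\tau\leq |t-s|\,\Q(R)\,\|\nabla\psi_\eta\|_{\cic{L}^{z(p)}(\Omega)}$. Collecting the estimates, the smooth part of the increment is $\leq C(\psi_\eta,R)\big(|t-s|^{1/p'}+|t-s|\big)$, which drops below $\frac{\eps}{2}$ once $|t-s|$ is small enough, \emph{uniformly in $n$}. Combining with the rough part gives the required equicontinuity. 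I expect the only genuine difficulty to be exactly this convective term: once the density argument has replaced $\psi$ by the smooth $\psi_\eta$, everything hinges on the borderline integrability $\omega^{(n)}\u^{(n)}\in\cic{L}^{s(p)}$, available only for $p\geq\frac43$.
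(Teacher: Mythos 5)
Your proof is correct and follows essentially the same route as the paper's: approximate $\psi$ by a smooth test function using the uniform vorticity bound of Lemma \ref{apvort} to absorb the error, integrate by parts in the convective term with the boundary contribution killed by $\div \u^{(n)}=0$ and $\u^{(n)}\cdot\cic{n}=0$, and use H\"older in time for the forcing. The only cosmetic difference is the arrangement of exponents in the convective estimate — you pair $\omega^{(n)}\u^{(n)}\in\cic{L}^{s(p)}$ against $\nabla\psi_\eta\in\cic{L}^{z(p)}$, while the paper pairs $\omega^{(n)}\in L^p$ against $\u^{(n)}\in\cic{L}^{p'}$ and $\nabla\varphi\in\cic{L}^\infty$ — but both rest on the same embedding $\cic{V}^{1,p}\hookrightarrow\cic{L}^{p^*}$ and the identical threshold $p\geq\frac43$ (since $p^*\geq p'$ and $s(p)\geq1$ are the same condition).
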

\begin{proof}  Let $\eps>0$ and $\psi \in L^{p'}(\Omega)$ be given; we have to show that there exist $\delta= \delta(R,\eps, \psi), N=N(\eps,\psi)$ such that
\begin{equation}
  |t_2-t_1| < \delta \implies \sup_{n \geq N}\big|  \big( \widetilde {\omega^{(n)}}(t_2), \psi\big)_{\Omega}
 - \widetilde {\omega^{(n)}}(t_1), \psi\big)_{\Omega}\big| <\eps.
\label{equicontinuity}
\end{equation}
We first assume  $\psi=\varphi \in \D(\Omega)$. Take $N=N(\varphi)$ large enough so that $\supp \varphi \Subset \Omega_N$.  Then, we multiply by $\varphi $ 
the  equation in \eqref{vortPn} and  integrate over $\Omega_n$: observing that
$$
\int_{\Omega_n}( \u^{(n)} \cdot \nabla \omega^{(n)} \varphi )+ \int_{\Omega_n} (\u^{(n)} \cdot \nabla  \varphi) \omega^{(n)} = - \int_{\Omega_n} (\div{\u^{(n)}}  \nabla  \varphi) \omega^{(n)} - \int_{\partial \Omega_n}( \varphi \omega^{(n)}) \cic{u}^{(n)} \cdot \cic{n}=0,
$$
since $\u^{(n)}$ is divergence free and has zero normal component on $\partial \Omega_n$, and integrating on $(t_1,t_2)$, one finds
\begin{align*} &\quad \big|  \big( \widetilde {\omega^{(n)}}(t_2), \varphi\big)_{\Omega} - \widetilde {\omega^{(n)}}(t_1), \varphi\big)_{\Omega}\big| =
\big|  \big(  {\omega^{(n)}}(t_2), \psi\big)_{\Omega_n} -   {\omega^{(n)}}(t_1), \psi\big)_{\Omega_n}\big| \\ & \leq \Big| \int_{t_1}^{t_2} \big(   \u^{(n)} \cdot \nabla  \varphi,  \omega^{(n)}\big)_{\Omega_n} \Big| + \Big| \int_{t_1}^{t_2} \big(g^{(n)}, \varphi\big)_{\Omega_n} \Big|\\
& \leq |t_1 - t_2| \|\u^{(n)}\|_{L^\infty(0,T; \cic{L}^{p'}(\Omega_n))}\|\nabla\varphi\|_{\cic{L}^\infty(\Omega_n)} \|\omega^{(n)}\|_{L^\infty(0,T;  {L}^{p}(\Omega_n))} \\ & \quad+ |t_1-t_2|^{\frac{1}{p'}}\|g^{(n)}\|_{L^p(0,T; L^p(\Omega_n))}\|\varphi\|_{{L}^{p'}(\Omega_n)} \\ & \leq T^{\frac1p}|t_1-t_2|^{p'} \Big(  \|\u^{(n)}\|_{L^\infty(0,T; \cic{V}^{1,p}(\Omega_n))}\|\nabla\varphi\|_{\cic{L}^\infty(\Omega_n)} \|\omega^{(n)}\|_{L^\infty(0,T;  {L}^{p}(\Omega_n))} \\ & \quad + \|g^{(n)}\|_{L^p(0,T; L^p(\Omega_n))}\|\varphi\|_{{L}^{p'}(\Omega_n)} \Big)
\\ & \leq T^{\frac1p}|t_1-t_2|^{p'} \Q(R)\big( \|\nabla\varphi\|_{\cic{L}^\infty(\Omega_n)} +\|\varphi\|_{{L}^{p'}(\Omega_n)} \big).
\end{align*} 
We used the Sobolev embedding  $\cic{V}^{1,p}(\Omega_n) \hookrightarrow \cic{L}^{p'}(\Omega_n)$, which holds in the region $p^* \geq p'$, that is $p\geq \frac43$, and \eqref{preliminaryu}, as well as Lemma  
 \ref{apvort}. Now,  for $\psi    \in L^{p'}(\Omega) $, take $\varphi \in \D(\Omega)$ with $\|\varphi-\psi\|_{ L^{p'}(\Omega)}<\eps/4\|\omega^{(n)}\|_{L^\infty(0,T; L^p(\Omega))}$. Set
$$
Q(\eps,\psi,R):= T^{\frac1p}\big( \|\nabla\varphi\|_{L^\infty(\Omega)} +\|\varphi\|_{{L}^{p'}(\Omega )} \big).
$$ Then for $n \geq N(\varphi)= N(\eps, \psi),$
\begin{align*}
\big|\big( \widetilde {\omega^{(n)}}(t_2)- \widetilde {\omega^{(n)}}(t_1), \psi\big)_{\Omega} \big| & \leq \big|\big( \widetilde {\omega^{(n)}}(t_2)- \widetilde {\omega^{(n)}}(t_1) , \varphi\big)_{\Omega}\big| + \big|\big( \widetilde {\omega^{(n)}}(t_1) - \widetilde {\omega^{(n)}}(t_2)  , \varphi-\psi\big)_{\Omega}\big|  \\
&  \leq |t_1-t_2|^{p'} Q(\eps,\psi,R) + 2 \|\omega^{(n)}\|_{L^\infty(0,T; L^p(\Omega_n))}\|\varphi-\psi\|_{ L^{p'}(\Omega)}
 <\eps,
\end{align*}
if $|t_1-t_2| \leq \delta:= { \big(\frac{\eps}{2} Q(\eps,\psi,R)\big)}^{\frac1{p'}}.$ Thus \eqref{equicontinuity} is fulfilled.
\end{proof} 
\noindent
Denote $\Psi^{(n)}:= \mathrm{G}_{\Omega_n} \omega^{(n)}\ $. In view of Lemma \ref{apvort},  the  elliptic regularity   \eqref{epest1} gives
\begin{align} \label{energy-psi1}
&\Psi^{n}(t) \in  W^{2,p}(\Omega_n) \cap  W^{1,p}_0(\Omega_n) \qquad  \textrm{a.e.\ }t \in (0,T), \\ &  \label{energy-psi2}
   \|  \Psi^{(n)}   \|_{L^\infty(0,T; W^{2,p}(\Omega_n))} \lesssim  \|   \omega^{(n)}  \|_{L^\infty(0,T; L^p(\Omega_n))} \leq \Q(R).
\end{align}
Note     that the embedding $W^{2,p}(\Omega_n) \cap W^{1,p}_0(\Omega_n) \hookrightarrow W^{1,r}_0(\Omega_n) $ is continuous (with uniformity in $n$, see Section \ref{s2}) 
whenever $1\leq r< p^*$. Thus, we use the extension property \eqref{extensionW} to derive from   \eqref{energy-psi1}-\eqref{energy-psi1} that
\begin{equation}
\label{w1r}
\|\widetilde{\Psi^{(n)}}\|_{L^\infty(0,T; W^{1,r}_0(\Omega ))}= \| \Psi^{(n)}\|_{L^\infty(0,T; W^{1,r}_0(\Omega_n))} \leq \Q(R), \qquad 1\leq r<p^*.
\end{equation}
 We will actually use \eqref{w1r} with $r=p$ and $r=2$.   Lastly,
we derive a uniform estimate on the time-derivative  of $ \u^{(n)}$.
\begin{lemma} \label{apder}
Let $
 s(p)$ be as in \eqref{sq}.  We have the estimate
\begin{equation} \label{energy-dtpsi}
 \| \pt \widetilde{\Psi^{(n)}}\|_{L^p(0,T; W^{1,s(p)}_0(\Omega))}    \leq \Q(R).
\end{equation}
\end{lemma}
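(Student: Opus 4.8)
The plan is to reduce the asserted bound on $\pt\widetilde{\Psi^{(n)}}$ to an $\cic{L}^{s(p)}$ estimate on the time derivative of the velocity $\u^{(n)}$. Recall that $\u^{(n)} = \mathbf{K}_{\Omega_n}\omega^{(n)} = \nabla^\perp\Psi^{(n)}$, since $\u^{(n)}$ and $\nabla^\perp\Psi^{(n)}$ are both tangential and divergence free with the same curl $\omega^{(n)}$ (cf.\ Lemma \ref{nablaperplemma}). Differentiating in time gives $\pt\u^{(n)} = \nabla^\perp\pt\Psi^{(n)}$, whence $|\nabla\pt\Psi^{(n)}| = |\pt\u^{(n)}|$ pointwise on $\Omega_n$. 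As $\pt\Psi^{(n)} = \mathrm{G}_{\Omega_n}\pt\omega^{(n)}$ vanishes on $\partial\Omega_n$, the uniform Poincar\'e inequality on the $\Omega_n$ (whose constant is bounded by that of $\Omega$ via extension by zero) together with the extension identity \eqref{extensionW} yields
\begin{equation*}
\|\pt\widetilde{\Psi^{(n)}}\|_{W^{1,s(p)}_0(\Omega)} = \|\pt\Psi^{(n)}\|_{W^{1,s(p)}_0(\Omega_n)} \lesssim \|\nabla\pt\Psi^{(n)}\|_{\cic{L}^{s(p)}(\Omega_n)} = \|\pt\u^{(n)}\|_{\cic{L}^{s(p)}(\Omega_n)},
\end{equation*}
so it suffices to bound $\pt\u^{(n)}$ in $L^p(0,T;\cic{L}^{s(p)}(\Omega_n))$ by $\Q(R)$.

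To this end I would apply the Leray projector $P_{\Omega_n}$ to the momentum equation in \eqref{Pn}. Since $\u^{(n)}$ is smooth, divergence free and with vanishing normal component, so is $\pt\u^{(n)}$; thus $\pt\u^{(n)}\in\cic{L}^{s(p)}_\tau(\Omega_n)$ is fixed by $P_{\Omega_n}$, while $P_{\Omega_n}\nabla\pi^{(n)}=0$ because gradients lie in the kernel of $P_{\Omega_n}$ by \eqref{iminusp}. Hence
\begin{equation*}
\pt\u^{(n)} = P_{\Omega_n}\big[\mathbf{f}^{(n)} - (\u^{(n)}\cdot\nabla)\u^{(n)}\big].
\end{equation*}
Now Theorem \ref{helmthm} bounds $P_{\Omega_n}$ on $\cic{L}^{s(p)}(\Omega_n)$ with operator norm uniform in $n$ (this is precisely where the uniform Lipschitz character \eqref{gammacv2} of the $\Omega_n$ is exploited). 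The forcing is controlled through the embedding $\cic{L}^{p}(\Omega_n)\hookrightarrow\cic{L}^{s(p)}(\Omega_n)$ (valid since $s(p)\le p$ on a bounded domain) and the bound \eqref{convinf} on $\|\mathbf{f}^{(n)}\|_{L^p(0,T;\cic{W}^{1,p}(\Omega_n))}$. The convective term is handled pointwise in time by Remark \ref{q431}: combining \eqref{convective} with the a priori estimate \eqref{preliminaryu} gives $\|(\u^{(n)}\cdot\nabla)\u^{(n)}(t)\|_{\cic{L}^{s(p)}(\Omega_n)} \lesssim \|\u^{(n)}(t)\|_{\cic{V}^{1,p}(\Omega_n)}^2 \le \Q(R)$ for a.e.\ $t$. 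Raising to the power $p$ and integrating over $(0,T)$ then yields $\|\pt\u^{(n)}\|_{L^p(0,T;\cic{L}^{s(p)}(\Omega_n))}\le\Q(R)$, which closes the argument.

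The main obstacle is the endpoint $p=\tfrac43$, where $s(p)=1$ and the Leray projector is no longer bounded on $\cic{L}^1(\Omega_n)$; the argument above therefore delivers \eqref{energy-dtpsi} cleanly only for $\tfrac43<p\le2$, i.e.\ when $1<s(p)\le2$. At $p=\tfrac43$ one must instead work with the functional bound \eqref{convective}, which identifies $(\u^{(n)}\cdot\nabla)\u^{(n)}$ as an element of $\mathcal{L}(\cic{L}^{z(p)}_\tau(\Omega_n),\R)$ \emph{uniformly} in $n$, and recover the conclusion through a limiting argument as $s\downarrow1$ (equivalently, by testing \eqref{Pn} against divergence-free fields and invoking the uniform $W^{1,s}$ estimates available for $s>1$). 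Apart from this endpoint, the one recurring point demanding care is the uniformity in $n$ of every implied constant — the Poincar\'e constant, the inclusion $\cic{W}^{1,p}(\Omega_n)\hookrightarrow\cic{L}^{s(p)}(\Omega_n)$, and the norm of $P_{\Omega_n}$ — each of which is uniform precisely because the $\Omega_n$ inherit the Lipschitz character of $\Omega$ by the construction of Subsection \ref{approxdom}.
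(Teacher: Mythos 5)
Your proposal is correct on the range $\tfrac43<p\leq 2$ and it is essentially the paper's own argument. The paper likewise derives a projected equation for the time derivative: working with $\cic{w}^{(n)}=\nabla\Psi^{(n)}=(\u^{(n)})^\perp$ and the vorticity equation \eqref{vortPn}, it obtains $\pt \cic{w}^{(n)} = -P^\perp_{\Omega_n}\big[-\omega^{(n)}\u^{(n)}+(\cic{f}^{(n)})^\perp\big]$, which is your identity $\pt\u^{(n)}=P_{\Omega_n}\big[\cic{f}^{(n)}-(\u^{(n)}\cdot\nabla)\u^{(n)}\big]$ rotated by $\perp$ (in two dimensions $(\u\cdot\nabla)\u=\omega\,\u^\perp+\tfrac12\nabla|\u|^2$ and the projector annihilates the gradient part, so the two formulations carry identical content). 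The quadratic term is then bounded with the same H\"older/Sobolev numerology as yours, $\|\omega^{(n)}\u^{(n)}\|_{\cic{L}^{s(p)}(\Omega_n)}\leq \|\omega^{(n)}\|_{L^p(\Omega_n)}\|\u^{(n)}\|_{\cic{L}^{p^*}(\Omega_n)}$ (see \eqref{boundun}--\eqref{fill}; note the paper must take $r=s(p)^*$ rather than $r=p^*$ at $p=2$, exactly as built into \eqref{sq}), the uniform-in-$n$ bound on the Helmholtz projector from Theorem \ref{helmthm} is invoked exactly as you invoke it, and the proof closes with extension by zero and the Poincar\'e inequality, as in your first display.

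One comment on your endpoint caveat. You are right that at $p=\tfrac43$ one has $s(p)=1$ and the projector is not bounded on $\cic{L}^1$; but you should know that the paper's own proof does not address this either: it applies Theorem \ref{helmthm} on $\cic{L}^{s(p)}(\Omega_n)$ without comment, although that theorem is stated only for exponents in $(1,\infty)$. So your caveat flags a genuine subtlety in the published argument rather than a deficiency of your proposal relative to it. On the other hand, your sketched repair is not convincing as stated: a limiting argument as $s\downarrow 1$, or ``uniform $W^{1,s}$ estimates for $s>1$,'' is unavailable at $p=\tfrac43$, because there the product $\omega^{(n)}\u^{(n)}$ is controlled only in $\cic{L}^1(\Omega_n)$ and no uniform $\cic{L}^s$ bound with $s>1$ can be extracted from Lemma \ref{apvort} and \eqref{boundun}. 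If you want the endpoint, you would need a substitute for the projector bound (e.g.\ a weak-type or Hardy-space estimate), not a rearrangement of the same Lebesgue-scale ingredients.
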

\begin{proof}
We set $$\w^{(n)}= \nabla \Psi^{(n)}=(\u^{(n)})^\perp.$$
  Observe that, since $\w^{(n)}$ is a gradient, $P^\perp_{\Omega_n} \pt \w^{(n)} = \pt P^\perp_{\Omega_n}  \w^{(n)}= \pt \w^{(n)} $. At this point,
note that, for any $\varphi \in \D(\Omega_n)$,
\begin{align*}
\big(\pt \w^{(n)},\nabla \varphi \big)_{\Omega_n} &= \ddt \big( \mathbf{w}^{(n)},\nabla \varphi \big)_{\Omega_n} =   -\ddt \big(  \Delta \Psi^{(n)}  ,  \varphi \big)_{\Omega_n}  = - \big( \pt  \omega^{(n)}  ,  \varphi \big)_{\Omega_n} \\ & =   \big( \u^{(n)} \cdot \nabla \omega^{(n)}  , \varphi \big)_{\Omega_n} -
 \big( g^{(n)}  , \varphi \big)_{\Omega_n} \\
& = \big(-\omega^{(n)} \u^{(n)}   +   ({\f}^{(n)})^\perp   , \nabla \varphi \big)_{\Omega_n}.
\end{align*}
We integrated by parts the first term in the next to last line to arrive at the last line.
 Thus, we obtain the equation
\begin{equation}
\label{eqw}\pt \w^{(n)} = -P^\perp_{\Omega_n}\big[-\omega^{(n)} \u^{(n)} + ({\f}^{(n)})^\perp\big], \qquad \textrm{in }\, \Omega_n\times (0,T).
\end{equation} Observe that, by Sobolev embeddings and   \eqref{energy-psi2},
\begin{equation}
\label{boundun}\|\u^{(n)}\|_{L^\infty(0,T;\cic{L}^{r}(\Omega_n)) }  \lesssim \|\Psi^{(n)}\|_{L^\infty(0,T;W^{2,p}(\Omega_n))} \lesssim \Q(R), \quad
r= \begin{cases} p^*, & \frac43\leq p<2, \\  s(p)^* & p=2 .\end{cases}
\end{equation}
Theorem \ref{helmthm} ensures that  $P_{\Omega_n}^\perp$ is a linear bounded operator on $\cic{L}^s(\Omega)$, with bound independent on $n$. This, together with   H\"older's inequality and Sobolev embeddings gives
\begin{align}
\label{fill}
\big\|P^\perp_{\Omega_n}\big[\omega^{(n)} \u^{(n)}\big]\big\| _{L^\infty(0,T;\cic{L}^{s(p)}(\Omega_n))}&\lesssim    \big\|\omega^{(n)} \u^{(n)} \big\|_{L^\infty(0,T;\cic{L}^{s(p)}(\Omega_n))} \\ &\leq    \|\omega^{(n)}\|_{L^\infty(0,T;L^p(\Omega_n))} \|\u^{(n)}\|_{L^\infty(0,T;\cic{L}^{r}(\Omega_n))} \nonumber
\\ & \leq \Q(R). \nonumber
\end{align} To obtain the last inequality we used Lemma   \ref{apvort} and \eqref{boundun}.
Since we also have
$$
\|P^\perp_{\Omega_n}({\f}^{(n)})^\perp\|_{L^{p}(0,T;\cic{L}^s(\Omega_n))} \lesssim \|({\f}^{(n)})^\perp\|_{L^{s}(0,T;\cic{L}^p(\Omega_n))} \lesssim \| {\f} \|_{L^{p}(0,T;\cic{W}^{1,p}(\Omega ))} \leq \Q(R),
$$
we obtain, comparing \eqref{eqw} with \eqref{fill} and the last display, that
$$
\|\pt \cic{u}^{(n)}\|_{L^p((0,T), \cic{L}^{s(p)}(\Omega_n))} =\|\pt \cic{w}^{(n)}\|_{L^p((0,T), \cic{L}^{s(p)}(\Omega_n))} \leq \Q(R),
$$
Observing that $\widetilde{\partial_t \u^{(n)}} = \partial_t \widetilde{\u^{(n)}}  $ in $\Omega\times [0,T]$ and the   extension by zero  to $\Omega $ preserves the $L^p$ norms, the above display yields
$$
 \|\pt \widetilde{\cic{u}^{(n)}}\|_{L^p((0,T), \cic{L}^{s(p)}(\Omega  ))} \leq \Q(R)
$$
and the 
 conclusion of the Lemma then follows from  the Poincar\'e inequality.
\end{proof} \noindent
\subsubsection{Conclusion of the proof.} \label{ss43p}
We will proceed in steps.
\vskip1mm \noindent\textsc{Step 1. Compactness. }In view of Lemma \ref{apvort},  the sequence of functions $t \mapsto \widetilde{\omega^{(n)}(t)}$ takes values in the closed ball $B$ of  $L^p(\Omega)$ of radius $\e^T R$. We recall that, due to  reflexivity and separability of $L^p(\Omega)$, the induced $\textrm{w}-L^{p}(\Omega)$ topology on $B$ is metrizable by $$
 d(u,v)= \sum_{k\geq 0} 2^{-k} \frac{|(u-v,\psi_k)_{\Omega}|}{1+|(u-v,\psi_k)_{\Omega}|}
 $$
where $\{\psi_k\}$ is a dense sequence in $L^{p'}(\Omega)$,   and $(B,d)$  is a compact metric space. The conclusion of Lemma \ref{eqvort} easily implies that the sequence $
 \widetilde{\omega^{(n)}}$ is equicontinuous on $[0,T]$ with values in the compact metric space $(B,d)$. Therefore, an application of Ascoli-Arzel\`a's theorem yields that
the sequence $\{ \widetilde{\omega^{(n)}}\}$ is precompact in $\C([0, T]; \mathrm{w}-L^{p}(\Omega))$, so that up to a subsequence
\begin{equation}
\label{omegaconv-l2weak}
 \widetilde{\omega^{(n)}} \to \omega  \; \textrm{ in } \;\C([0, T]; \mathrm{w}-L^{p}(\Omega)).
\end{equation}
We derive some consequences from \eqref{omegaconv-l2weak}.
\begin{lemma} \label{lemmaomega} We have that
\begin{align}
& \omega \in  \C([0, T]; \mathrm{w}-L^{p}(\Omega)) \cap L^\infty(0, T;  L^{p}(\Omega)) \cap \C([0, T]; H^{-1}(\Omega)), \label{c0TH-1} \\ & \label{initialdata-omega}
\omega(0)= \omega_0 \in L^p(\Omega).
\\
&   \label{omegaconv-a.e}
 \omega^{(n)}(t) \to \omega(t)  \quad \textrm{ \emph{in} } \; H^{-1}(\Omega) \qquad \forall t \in [0,T].
 \end{align}
\end{lemma}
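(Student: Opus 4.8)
The plan is to extract all three assertions \eqref{c0TH-1}, \eqref{initialdata-omega}, \eqref{omegaconv-a.e} from the single convergence \eqref{omegaconv-l2weak}, the uniform bound of Lemma \ref{apvort}, and the compactness of the embedding $L^p(\Omega) \hookrightarrow H^{-1}(\Omega)$. The membership $\omega \in \C([0,T]; \mathrm{w}-L^{p}(\Omega))$ is immediate, being the very space in which the limit \eqref{omegaconv-l2weak} is taken. For the $L^\infty$ bound I would fix $t$ and recall that each $\widetilde{\omega^{(n)}}(t)$ lies in the ball of radius $\e^T R$ in $L^p(\Omega)$ by Lemma \ref{apvort}; weak lower semicontinuity of the $L^p$ norm then gives $\|\omega(t)\|_{L^p(\Omega)} \le \e^T R$ for every $t$, whence $\omega \in L^\infty(0,T; L^{p}(\Omega))$.

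The key analytic input is that the inclusion $\iota: L^p(\Omega) \hookrightarrow H^{-1}(\Omega)$ is compact for $p>1$. I would obtain this by duality: in two dimensions Rellich--Kondrachov yields the compact embedding $H^1_0(\Omega) \hookrightarrow L^{p'}(\Omega)$ for every $p'<\infty$, i.e.\ for every $p>1$ (and here $p\geq\frac43$), and $\iota$ is the identification of its Banach-space adjoint, hence compact by Schauder's theorem. Since a compact operator sends weakly convergent sequences to norm-convergent ones, for each fixed $t$ the weak-$L^p$ convergence $\widetilde{\omega^{(n)}}(t)\to\omega(t)$ furnished by \eqref{omegaconv-l2weak}, together with the uniform $L^p$ bound, upgrades to strong convergence in $H^{-1}(\Omega)$; this is precisely \eqref{omegaconv-a.e}.

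The hard part is promoting this pointwise-in-$t$ strong convergence to continuity of the limit, i.e.\ $\omega \in \C([0,T]; H^{-1}(\Omega))$. I would achieve this by showing that $\widetilde{\omega^{(n)}}\to\omega$ is in fact uniform in $t$ in the $H^{-1}$ norm, so that $\omega$ is a uniform limit of the ($H^{-1}$-continuous) maps $\widetilde{\omega^{(n)}}$. Arguing by contradiction, suppose $\sup_t \|\widetilde{\omega^{(n)}}(t)-\omega(t)\|_{H^{-1}(\Omega)}\not\to 0$; passing to a subsequence there are times $t_n \to t_* \in [0,T]$ with $\|\widetilde{\omega^{(n)}}(t_n)-\omega(t_n)\|_{H^{-1}(\Omega)}\ge \eps$. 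As \eqref{omegaconv-l2weak} is uniform in the metric $d$ metrizing the weak-$L^p$ topology on the ball $B$, one has $d(\widetilde{\omega^{(n)}}(t_n),\omega(t_n))\to 0$; combining this with $\omega(t_n)\to\omega(t_*)$ in $d$ (continuity of $\omega$ into $\mathrm{w}-L^p(\Omega)$) gives $\widetilde{\omega^{(n)}}(t_n)\to\omega(t_*)$ weakly in $L^p(\Omega)$, and likewise $\omega(t_n)\to\omega(t_*)$ weakly. The compact embedding then forces both $\widetilde{\omega^{(n)}}(t_n)$ and $\omega(t_n)$ to converge to $\omega(t_*)$ strongly in $H^{-1}(\Omega)$, contradicting the lower bound $\eps$. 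This uniform statement establishes \eqref{c0TH-1} and reproves \eqref{omegaconv-a.e} with uniformity in $t$.

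Finally, for the initial datum I would evaluate at $t=0$: by construction $\widetilde{\omega^{(n)}}(0)=\widetilde{\omega_0^{(n)}}\to\omega_0$ strongly in $L^p(\Omega)$ by \eqref{convinomega}, while \eqref{omegaconv-a.e} gives $\widetilde{\omega^{(n)}}(0)\to\omega(0)$ weakly in $L^p(\Omega)$. Uniqueness of weak limits forces $\omega(0)=\omega_0$, and $\omega_0=\curl\u_0\in L^p(\Omega)$ since $\u_0\in\cic{V}^{1,p}(\Omega)$. This yields \eqref{initialdata-omega} and completes the proof of the lemma.
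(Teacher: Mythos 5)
Your proposal is correct and rests on exactly the same mechanism as the paper: the compactness of $L^p(\Omega)\hookrightarrow H^{-1}(\Omega)$, deduced by Schauder duality from the Rellich--Kondrachov embedding $H^1_0(\Omega)\hookrightarrow L^{p'}(\Omega)$, which upgrades the weak-$L^p$ statements of \eqref{omegaconv-l2weak} to strong $H^{-1}$ ones, together with the identical identification $\omega(0)=\omega_0$ via \eqref{convinomega}. The only difference is cosmetic: where you prove $\omega\in\C([0,T];H^{-1}(\Omega))$ through a contradiction argument establishing uniform-in-$t$ convergence, the paper reaches it in one step by observing that the embedding $\mathrm{w}$-$L^p(\Omega)\hookrightarrow H^{-1}(\Omega)$ is continuous on the ball $B$ and composing with $\omega\in\C([0,T];(B,d))$.
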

\begin{proof} The first two inclusions in \eqref{c0TH-1} have just been shown. Actually the first inclusion implies the  stronger property 
\begin{equation}  
\omega(t)   \in  L^p(\Omega)  \qquad \forall t \in [0,T]. 
\label{regularomega}
\end{equation}
The third follows from the first and the  continuous embedding $\mathrm{w}-L^{p}(\Omega) \hookrightarrow H^{-1}(\Omega)
.$\footnote{The continuity of the embedding $\mathrm{w}-L^{p}(\Omega) \hookrightarrow H^{-1}(\Omega)$ for $1<p<\infty$ is a consequence 
of  the compactness of $L^p(\Omega) \hookrightarrow H^{-1}(\Omega)$. This in turn follows from the adjoint compact embedding $H^{1}_0(\Omega) \hookrightarrow L^{p'}(\Omega)$.} 
Then, \eqref{omegaconv-l2weak}    yields in particular  that
$\omega(0)$ is the $\mathrm{w}-L^p(\Omega)$-limit of $\{ \widetilde{ \omega^{(n)} (0)}\}$;
but, in view of \eqref{convinomega}, this implies
$\omega(0)= \omega_0 \in L^p(\Omega)$, so that \eqref{initialdata-omega} follows.
\noindent
Finally, \eqref{omegaconv-a.e}  follows  from \eqref{omegaconv-l2weak} and the continuity of the embedding $\mathrm{w}-L^p(\Omega) \hookrightarrow H^{-1}(\Omega)$.
\end{proof}
Define
$$
\Psi: \Omega \times [0,T] \to \R,
\qquad \Psi:= \mathrm{G}_\Omega \omega.$$ Then,
an application of \eqref{ercvx} and \eqref{c0TH-1}   yields
\begin{equation}
\label{boundPsi}
\Psi(t) \in W^{2,p} \cap W^{1,p}_0(\Omega), \; \|\Psi(t)\|_{ W^{2,p}(\Omega) } \lesssim \|\omega\|_{L^\infty(0,T; L^p(\Omega))}   \leq \Q(R), \quad\forall t \in [0,T].
\end{equation} 
\begin{lemma}[Convergence of the stream functions]   \label{}
We have that
\begin{equation} \label{weakpsi}
\widetilde{\Psi^{(n)}} \to \Psi:= \mathrm{G}_\Omega \omega \qquad \textrm{ \emph{in} }\; L^2(0,T;  H^1_0(\Omega)).
\end{equation}
\end{lemma}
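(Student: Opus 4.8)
The plan is to prove \eqref{weakpsi} in two stages: first establish the strong $H^1_0(\Omega)$ convergence pointwise in $t$, and then promote it to the time-integrated statement by dominated convergence, using the time-uniform bounds already gathered. The only substantive ingredient is the $\Gamma$-convergence Lemma \ref{lemmaconvcvx}, which has already been proved; the remainder is assembly.

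First I would fix $t \in [0,T]$ and apply Lemma \ref{lemmaconvcvx} with $f_n = \widetilde{\omega^{(n)}(t)}$ and $f = \omega(t)$, both of which lie in $L^1_{\mathrm{loc}}(\Omega)$ by \eqref{regularomega}. Since $\omega^{(n)}(t)$ is carried by $\Omega_n$, one has $(f_n)_{\Omega_n} = \omega^{(n)}(t)$, so that $\widetilde{\mathrm{G}_{\Omega_n}(f_n)_{\Omega_n}} = \widetilde{\Psi^{(n)}(t)}$ and $\mathrm{G}_\Omega f = \Psi(t)$. The hypothesis $f_n \to f$ in $H^{-1}(\Omega)$ required by the lemma is precisely \eqref{omegaconv-a.e}, which holds for every $t$; hence
\[
\widetilde{\Psi^{(n)}(t)} \to \Psi(t) \quad \textrm{in } H^1_0(\Omega), \qquad \forall\, t \in [0,T].
\]
To pass to $L^2(0,T;H^1_0(\Omega))$, I would invoke the uniform bounds: taking $r=2$ in \eqref{w1r} gives $\|\widetilde{\Psi^{(n)}}\|_{L^\infty(0,T;H^1_0(\Omega))} \leq \Q(R)$ uniformly in $n$, and \eqref{boundPsi} combined with the embedding $W^{2,p}(\Omega)\cap W^{1,p}_0(\Omega)\hookrightarrow H^1_0(\Omega)$ (valid as $p^*>2$ for $p>1$) gives $\|\Psi\|_{L^\infty(0,T;H^1_0(\Omega))} \leq \Q(R)$. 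Thus the scalars $g_n(t):= \|\widetilde{\Psi^{(n)}(t)}-\Psi(t)\|_{H^1_0(\Omega)}^2$ tend to $0$ at every $t$ and are dominated by the constant $(2\Q(R))^2$, which is integrable on the finite interval $(0,T)$.

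Before appealing to dominated convergence I would confirm the measurability of $g_n$: each $\widetilde{\Psi^{(n)}}$ is continuous in $t$ into $H^1_0(\Omega)$ (being the zero-extension, an isometry by \eqref{extensionW}, of the stream function of the smooth solution $\u^{(n)}$), while $t\mapsto \Psi(t)=\mathrm{G}_\Omega\omega(t)$ is continuous into $H^1_0(\Omega)$ since $\mathrm{G}_\Omega\colon H^{-1}(\Omega)\to H^1_0(\Omega)$ is bounded (Proposition \ref{ellprob}) and $\omega\in\C([0,T];H^{-1}(\Omega))$ by \eqref{c0TH-1}. Dominated convergence then yields $\int_0^T g_n(t)\,\d t\to 0$, which is exactly \eqref{weakpsi}. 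The only delicate point is the pointwise-in-time invocation of Lemma \ref{lemmaconvcvx}; once the pointwise convergence and the time-uniform bound \eqref{w1r} are in hand, the remaining step is a routine domination argument on a finite interval.
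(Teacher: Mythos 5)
Your proposal is correct and follows the same route as the paper's own proof: pointwise-in-time convergence $\widetilde{\Psi^{(n)}}(t)\to\Psi(t)$ in $H^1_0(\Omega)$ obtained by applying Lemma \ref{lemmaconvcvx} at each $t$ via \eqref{omegaconv-a.e}, followed by dominated convergence using the uniform bound \eqref{w1r} with $r=2$. Your additional checks (measurability of $t\mapsto\|\widetilde{\Psi^{(n)}}(t)-\Psi(t)\|_{H^1_0(\Omega)}$ and the bound on the limit $\Psi$) are sound refinements of details the paper leaves implicit.
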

\begin{proof} 
We begin by observing that, in view of Lemma \ref{lemmaconvcvx},
  the convergence \eqref{omegaconv-a.e} implies that  $\widetilde{\Psi^{(n)}} (t) \to \Psi(t)$ in $ H^1_0(\Omega)$ for all $t\in[0,T]$.  We also read from  \eqref{w1r} with $r=2$  that $$\sup_{n} \sup_{t \in[0,T]} \big\|\widetilde{\Psi^{(n)}}(t) \big\|_{ H^{1}_0(\Omega)} \leq \Q(R),$$ so that 
the claim of the lemma follows by the dominated convergence theorem. 
\end{proof} 
The final observation of Step 1 is that \eqref{weakpsi} and the bound \eqref{energy-dtpsi} guarantee that $\{\widetilde{\Psi^{(n)}}\}$ is a uniformly bounded sequence in $W^{1,p}\big(0,T; W_0^{1,  {s(p)}}(\Omega) \big)$, whence \begin{equation} \pt \Psi \in L^{p}(0,T; W_0^{1,  {s(p)}}(\Omega) ) .  \label{strongpsidt}
\end{equation}
\vskip1mm \noindent\textsc{Step 2. Construction of the solution to \eqref{P}.}
We now introduce $$
\u= \u(x,t): \Omega\to \R^2, \qquad \u(t) :=\mathbf{K}_\Omega \omega(t), \; t \in [0,T].$$
As a consequence  of \eqref{K-boundstrong}, Lemma \ref{lemmaomega}, and \eqref{strongpsidt}    we have
\begin{align}
 &\u(t) \in   \cic{W}^{1,p}_\tau(\Omega)  \qquad \forall t \in [0,T], \label{regularu11} \\
&
\|\u\|_{\cic{L}^\infty(0,T; \cic{W}^{1,p}_\tau(\Omega))}\lesssim    \|\omega\|_{L^\infty(0,T; L^p(\Omega))} \leq \Q(R), \label{regularu1} \\
 &\u \in \C([0,T]; \cic{L}^2(\Omega)), \label{regularu2} \\ & \pt \u \in  L^{p}(0, T; \cic{L}^{s(p)} (\Omega) ),  \label{regulardt} \\
 \label{initialdata-u} &  \u(0) = \cic{\cic{L}}^2-\lim_{t=0} \u(t)= \mathbf{K}_\Omega \omega_0 = \u_0;
\end{align}
furthermore, the convergence  \eqref{weakpsi}  translates into 
\begin{equation} \label{strconv-u}
\widetilde{\u^{(n)}} \to \u \qquad \textrm{in} \;L^2(0,T;\mathbf{L}^2(\Omega)).
\end{equation}
\vskip1mm \noindent\textsc{Step 3. Passage to the limit.} Thanks to \eqref{regularu1}-\eqref{initialdata-u} we see that \eqref{DefWS1}-\eqref{DefWS3} and \eqref{Sol-reg} hold true.
 To show that $\u$ is a weak solution to \eqref{P} we are left with proving that the distributional equality \eqref{integralWS} holds
for any $\v \in \V, \psi \in \D(0,T).$

  Multiplying and integrating by parts in \eqref{Pn} leads to the equation
\begin{align}
\label{convergence-2}
 &  -\int_0^T \big( \u^{(n)}(t), \v\psi'(t) \big)_{\Omega_n} \, \d t+ B_{\Omega_n} (\u^{(n)}, \u^{(n)},\v\otimes\psi) = \int_0^T\big({\f} (t),\v\psi(t)\big)_{ \Omega_n } \, \d t.
\end{align}
Due to \eqref{strconv-u} and \eqref{convinf} respectively, it is immediate to see  that the first term on the left hand side, and the right hand side of \eqref{convergence-2} converge to the homologous terms in \eqref{integralWS}. We now treat the nonlinear term. We have that, using \eqref{tr-time2}, and subsequently \eqref{tr-time} with $s_1=r_1=s_3=r_3=2,$ $s_2=r_2=\infty$,
\begin{align*} &
 \quad \Big|B_{\Omega_n}  (\u^{(n)} , \u^{(n)} ,\v\psi )  -  B_{\Omega} ((\u  , \u ,\v\psi)  \Big| =  \Big|B_{\Omega} (\u^{(n)} , \v\psi, \u^{(n)}  )   -  B_{\Omega} (\u  , \v\psi,\u)   \Big|\\ &\leq\Big|B_\Omega(\u-\widetilde{\u^{(n)}}  , \v\psi, \u^{(n)}) \Big| +\Big| B_\Omega(\u,\v \psi, \u-\widetilde{\u^{(n)}}  )\, \d t  \Big|  \\ & \lesssim
 \|\u-\widetilde{\u^{(n)}}\|_{L^2(0,T;\mathbf{L}^2(\Omega))}\big(\|\widetilde{\u^{(n)}}\|_{L^2(0,T;\mathbf{L}^2(\Omega))}+\|\u \|_{L^2(0,T;\mathbf{L}^2(\Omega))}   \big) \|\psi\|_{L^\infty(0,T)}\|\nabla \v\|_{L^\infty(\Omega)},
\end{align*}
so that \eqref{strconv-u} allows us to conclude that \eqref{integralWS} holds. Note that $\{\widetilde {u^{(n)}}\}$ is a bounded sequence in $L^2(0,T;\mathbf{L}^2(\Omega))$. Also note that for $n$ sufficiently large, $\mathrm{supp}\, \v \subset {\Omega_n}$, thus we could replace $B_{\Omega_n}$ by $B_\Omega$ in the second step. The proof of Proposition \ref{ercvx} is therefore complete.

\section{Main Results} \label{s5}
Our first main result is an improvement of Proposition \ref{ThEx}:  we exploit the extra regularity \eqref{Sol-reg} of the weak solution to Problem \eqref{P} given in Proposition \ref{ThEx} to recover the pressure $\pi$ and show that the pair $(\u,\pi)$ thus obtained satisfies \eqref{P} almost everywhere in $\Omega \times (0,T).$ We summarize the properties of the weak solutions obtained in the main results, which we term  (S)$_p$\emph{-weak solutions}, in the following definition.
\begin{definition}  \label{Defsws} Let $\frac43<p< \infty$, $\u_0 \in \cic{V}^{1,p}(\Omega), \; {\f} \in L^{p}(0,T; \cic{W}^{1,p}(\Omega))$. Denote also $q=\max\{2,p\}$.
We call (S)$_p$\emph{-weak solution} to the Euler system \eqref{P} with data $\u_0,\f$ a  pair $$(\u:\Omega\times [0,T] \to \R^2,\pi:\Omega\times(0,T) \to \R)$$ with $ \u(0)= \u_0,$ with the properties
\begin{align}
& \cic{u} \in \mathcal{C}\big([0,T]; \cic{L}^q_\tau(\Omega)\big)\cap L^{\infty}\big(0,T; \cic{V}^{1,p}(\Omega)\big), \quad \u(t) \in   \mathbf{V}^{1,p} (\Omega) \;\;\forall t \in[0,T],  \label{thex3pf2} \\
& \curl \u \in   \mathcal{C}\big([0,T]; \mathrm{w}-{L}^p(\Omega)\big)   \label{vort-reg1} \\
&
 \pt \u, \; \nabla\pi \in  L^{p}\big( 0,T ; \cic{L}^{s(p)}(\Omega) ),   \label{thex3pf3} \\
&   \label{PLq}
\pt \u + (\u \cdot \nabla) \u +\nabla \pi = {\f} \qquad \textrm{ in } \;\; L^{p}\big(0,T;  \cic{L}^{s(p)}(\Omega)\big) \textrm{ and a.e. in }\Omega \times (0,T) .  
\end{align}
Multiplying \eqref{PLq} by an appropriate test function and integrating on $\Omega \times (0,T)$, it is easy to see that an (S)$_p$-weak solution to \eqref{P} is a weak solution in the sense of Subsection \ref{DefWS}. 
\end{definition}
\begin{theorem} \label{ThEx1} Let $\Omega \subset \R^2$ be a bounded convex domain and
  $p \in \big(\frac43,2\big] $. Given $$\u_0 \in \cic{V}^{1,p}(\Omega), \qquad {\f} \in L^{p}(0,T; \cic{W}^{1,p}(\Omega)),$$
there exists a pair $(\u:\Omega\times [0,T] \to \R^2,\pi:\Omega\times[0,T] \to \R)$ which is an \emph{(S)}$_p$-weak solution to the Euler system \eqref{P} with data $\u_0,\f$.
 \end{theorem}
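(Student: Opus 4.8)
The plan is to upgrade the weak solution $\u$ furnished by Proposition \ref{ThEx} to an (S)$_p$-weak solution by recovering the pressure. All the regularity required in \eqref{thex3pf2}--\eqref{vort-reg1} (with $q=2$, since $p\le 2$) is already provided by \eqref{Sol-regu}--\eqref{vort-reg}; the only genuinely new object to produce is $\nabla\pi$, and the only new assertion to establish is the almost-everywhere identity \eqref{PLq}. To this end I would set
\[
\mathbf{g} := \f - \pt\u - (\u\cdot\nabla)\u
\]
and show that $\mathbf{g}$ is a gradient field lying in $L^p(0,T;\cic{L}^{s(p)}(\Omega))$.

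First I would check that each of the three terms defining $\mathbf{g}$ belongs to $L^p(0,T;\cic{L}^{s(p)}(\Omega))$. For $\pt\u$ this is exactly \eqref{Sol-reg}; for the convective term it is the content of Remark \ref{q431}, which gives $(\u(t)\cdot\nabla)\u(t)\in\cic{L}^{s(p)}(\Omega)$ with norm controlled by $\|\u(t)\|_{\cic{V}^{1,p}(\Omega)}^2$, hence an $L^p(0,T)$ bound in time through \eqref{DefWS1}; and for $\f$ one combines the Sobolev embedding $\cic{W}^{1,p}(\Omega)\hookrightarrow\cic{L}^{s(p)}(\Omega)$ (valid since $s(p)<p^*$) with the hypothesis $\f\in L^p(0,T;\cic{W}^{1,p}(\Omega))$. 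Next, starting from the weak formulation \eqref{integralWS} and integrating by parts in time --- which is legitimate because $\pt\u\in L^p(0,T;\cic{L}^{s(p)}(\Omega))$, $\v\in\D(\Omega)$, and $\psi\in\D(0,T)$ has compact support, so that $t\mapsto(\u(t),\v)_\Omega$ is absolutely continuous with derivative $(\pt\u(t),\v)_\Omega$ --- I would rewrite the identity as
\[
\int_0^T \big(\mathbf{g}(t),\v\big)_{\Omega}\,\psi(t)\,\d t = 0, \qquad \forall\,\v\in\V,\ \psi\in\D(0,T).
\]
Since $\psi$ is arbitrary, this yields, for almost every $t\in(0,T)$, the identity $(\mathbf{g}(t),\v)_\Omega=0$ for all $\v\in\V$.

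It remains to convert this orthogonality to $\V$ into the representation $\mathbf{g}(t)=\nabla\pi(t)$. For a.e.\ fixed $t$, the de Rham--type characterization recalled in the proof of \eqref{anni} (a distribution annihilating $\V$ is a gradient) applies to $\mathbf{g}(t)\in\cic{L}^{s(p)}(\Omega)$ and produces $\pi(t)\in\D'(\Omega)$ with $\nabla\pi(t)=\mathbf{g}(t)$; since $\nabla\pi(t)\in\cic{L}^{s(p)}(\Omega)$, the Deny--Lions lemma together with the Poincar\'e inequality (exactly as in the proof of \eqref{anni}) gives $\pi(t)\in W^{1,s(p)}(\Omega)$ after normalizing to zero mean. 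To assemble these into a single function of $t$, I would instead invoke the Leray projector of Theorem \ref{helmthm} at the exponent $s(p)\in(1,2)$: the pointwise identity forces $P_\Omega\mathbf{g}(t)=0$, so $\mathbf{g}=P_\Omega^\perp\mathbf{g}=\nabla\pi$ as an element of $L^p(0,T;\cic{L}^{s(p)}(\Omega))$, with $\pi$ the measurable, mean-zero normalized Helmholtz potential; the boundedness of the potential map in \eqref{iminusp} then places $\pi\in L^p(0,T;W^{1,s(p)}(\Omega))$. This establishes \eqref{thex3pf3} and, by the very definition of $\mathbf{g}$, the equation \eqref{PLq}. The main obstacle --- and the reason the endpoint $p=\frac43$ is excluded here, although Proposition \ref{ThEx} covers it --- is precisely the recovery of the pressure as an $\cic{L}^{s(p)}$ gradient: this rests on $(\u\cdot\nabla)\u\in\cic{L}^{s(p)}(\Omega)$ with $s(p)>1$, which via the Riesz representation argument of Remark \ref{q431} holds only for $p>\frac43$, since at $p=\frac43$ one has $s(p)=1$, $z(p)=\infty$, and the representation breaks down.
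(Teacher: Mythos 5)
Your proposal is correct and follows essentially the same route as the paper's proof: both identify the residual $\f - \pt\u - (\u\cdot\nabla)\u$ as an element of $L^p(0,T;\cic{L}^{s(p)}(\Omega))$ (via \eqref{Sol-reg} and Remark \ref{q431}), show it annihilates divergence-free test fields, and invoke the gradient characterization \eqref{anni} together with Theorem \ref{helmthm} (Deny--Lions plus Poincar\'e) to produce $\pi \in L^p(0,T;W^{1,s(p)}(\Omega))$. The only cosmetic difference is that the paper passes by density to the annihilator of $\cic{L}^{p'}(0,T;\cic{L}^{z(p)}_\tau(\Omega))$ in time-space at once, whereas you argue pointwise in $t$ and recover joint measurability through the Leray projector $P_\Omega^\perp$ at exponent $s(p)$ --- a valid variant, provided you note that in the step ``for a.e.\ $t$, $(\mathbf{g}(t),\v)_\Omega=0$ for all $\v\in\V$'' the exceptional null set a priori depends on $\v$, which is repaired by the standard separability argument (a countable family of $\V$ dense in $\cic{L}^{z(p)}_\tau(\Omega)$, using \eqref{helm-lq}).
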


\begin{proof}[Proof of Theorem \ref{ThEx1}]
Assume $p\in (\frac43,2]$ and let $\u$ be the solution to Problem \eqref{P} given by Proposition \ref{ThEx}. Note that, in view of   Remark
\ref{q431}
the functional $\v \mapsto B_{\Omega}(\u,\u,\v)$ extends boundedly to $\cic{L}^{p'}(0,T; \cic{L}^{z(p)}_\tau(\Omega))$, the function $(\u \cdot \nabla ) \u  \in L^\infty(0,T; \cic{L}^{s(p)}(\Omega)),$ and we have the equality
\begin{equation} \label{udotgrad}
B_{\Omega}(\u,\u,\v)= \int_0^T \big( (\u(t) \cdot \nabla ) \u(t), \v(t) \big)_{\Omega} \, \d t, \qquad  \forall \v \in L^{p'}(0,T; \cic{L}^{z(p)}_\tau(\Omega)),
\end{equation}
by density of $\V \otimes \D(0,T) $ in  ${L}^{p'}(0,T; \cic{L}^{z(p)}_\tau(\Omega))$.
Due to \eqref{Sol-reg}, we also have  $\pt \u \in  L^{p}\big( 0,T ; \cic{L}^{s(p)}(\Omega) )$, and the equality
$$
\int_0^T \big( \pt  \u  , \v\psi  \big)_{\Omega}= -\int_0^T \big( \u  , \v\psi'  \big)_{\Omega} = - B_{\Omega}(\u,\u,\v\otimes \psi) + \int_0^T \big( {\f},\v \psi\big),   
$$
valid for all $\v \otimes \psi \in \V \otimes \D(0,T),$ carries over by density to
\begin{equation} \label{annikil}
\int_0^T \Big( \pt  \u(t) + (\u(t) \cdot \nabla)\u(t)-{\f}(t) , \v(t)  \Big)_{\Omega}   \, \d t =0, \qquad \forall \v \in\cic{L}^{p'}(0,T; \cic{L}^{z(p)}_\tau(\Omega)).
\end{equation}
 Now we define
\begin{equation} \label{gradp}
\cic{w}: \Omega \times (0,T) \to \R^2, \qquad \cic{w} := \pt \u + (\u \cdot \nabla ) \u  - {\f};
\end{equation}
we have
$
\cic{w} \in L^{p}(0,T; \cic{L}^{s(p)}(\Omega)),
$
and we read from \eqref{annikil} that  $\cic{w}$ belongs to the annihilator $X_{p'}$ of $\cic{L}^{p'}(0,T; \cic{L}^{z(p)}_\tau(\Omega))$ in $\cic{L}^{p'}(0,T; \cic{L}^{z(p)}(\Omega))$. Using \eqref{anni}, we see that
$X_{p'}
$
is the $L^{p}(0,T; \cic{L}^{s(p)}(\Omega))$-closure of  $(\cic{H}^{z(p)}_\tau)^\perp\otimes \D(0,T)$,
that is,
$
X_{p'} = L^{p}(0,T; \nabla W^{1,s(p)}(\Omega))$. Therefore, we proved that
\begin{equation} \label{gradp2}
\exists \pi \in L^{p}(0,T;  W^{1,s(p)}(\Omega)): \cic{w}=-\nabla \pi.
\end{equation}
Summarizing \eqref{Sol-reg}, \eqref{udotgrad}, \eqref{gradp}, \eqref{gradp2}, Theorem \ref{ThEx1} is proven.
\end{proof}

We now come to the second main theorem, which deals with \eqref{P} on the rectangle $\Omega=[0,L_1]\times[0,L_2]$ in the  range of exponents $2<p\leq \infty$ not covered by Theorem \ref{ThEx1}. In particular, this theorem contains the uniqueness of weak solutions to \eqref{P} on a rectangle with bounded initial vorticity.
\begin{theorem} \label{ThEx3} Let $\Omega=[0,L_1] \times [0,L_2]$ and   $2<p\leq \infty$, and  $$\u_0 \in \cic{V}^{1,p}(\Omega),\qquad \f \in  L^{p}\big(0,T; \cic{W}^{1,p}(\Omega)\big)  $$ be given.
\begin{itemize}\item[$(a)$] If $p<\infty$, there exists a pair $(\u,\pi)$ which is an \emph{(S)}$_p$-weak solution to the Euler system \eqref{P} with data $\u_0,\f$.
\item[$(b)$] If $p=\infty$, there exists a unique (up to an additive constant in $\pi$) pair $ (\u ,\pi )$ which is an \emph{(S)}$_q$-weak solution to the Euler system \eqref{P} with data $\u_0,\f$ for all $2<q<\infty$, with the additional regularity
$$
\cic{u}  \in L^{\infty}\big(0,T; \cic{V}^{1,\infty} (\Omega)\big).
$$
\end{itemize}
\end{theorem}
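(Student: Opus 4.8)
\emph{Part $(a)$.} Both parts rest on the machinery already assembled; the decisive new input is that on the rectangle the elliptic regularity Theorem \ref{ercvx} (confined to $p\le2$ on a general convex domain) may be replaced by its rectangular counterpart \eqref{embedding}, valid for every $2<p<\infty$, together with the endpoint estimates of Proposition \ref{bmolaw}. For part $(a)$ the plan is to re-run the construction of Proposition \ref{ThEx}, noting that the restriction $p\le2$ entered there only through the embedding $\cic{V}^{1,p}(\Omega)\hookrightarrow\cic{W}^{1,p}(\Omega)$. The vorticity bound of Lemma \ref{apvort} is purely transport-theoretic and holds for every finite $p$, so the approximate vorticities $\widetilde{\omega^{(n)}}$ are uniformly bounded in $L^\infty(0,T;L^p(\Omega))$; the compactness and limit passage of Subsection \ref{ss43p} rest only on the $L^2$-level estimates (uniform in $n$) and go through unchanged, producing a limiting vorticity $\omega\in L^\infty(0,T;L^p(\Omega))$ and a weak solution $\u=\mathbf{K}_\Omega\omega$. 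At this point I would apply \eqref{embedding} \emph{on the rectangle itself}, rather than on the smooth subdomains, to get $\u(t)=\nabla^\perp\mathrm{G}_\Omega\omega(t)\in\cic{W}^{1,p}(\Omega)$ with $\|\u(t)\|_{\cic{W}^{1,p}(\Omega)}\lesssim\|\omega(t)\|_{L^p(\Omega)}$, whence $\u\in L^\infty(0,T;\cic{V}^{1,p}(\Omega))$ and the embedding $\cic{V}^{1,p}(\Omega)\hookrightarrow\cic{W}^{1,p}(\Omega)$ holds. Since $p>2$ gives $\u\in\cic{L}^\infty$ and $z(p)=p'$, the convective term $(\u\cdot\nabla)\u$ lies in $L^\infty(0,T;\cic{L}^{s(p)}(\Omega))$ with $s(p)=p$, and $\v\mapsto B_\Omega(\u,\u,\v)$ extends to $L^{p'}(0,T;\cic{L}^{p'}_\tau(\Omega))$ exactly as in Remark \ref{q431}. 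The pressure is then recovered verbatim from the proof of Theorem \ref{ThEx1}: the annihilator characterisation \eqref{anni} (valid for all $1<p<\infty$ by Theorem \ref{helmthm}) yields $\sigma\in L^p(0,T;W^{1,p}(\Omega))$ with $\f-\pt\u-(\u\cdot\nabla)\u=-\nabla\sigma$, and reading off $\pt\u=\f-(\u\cdot\nabla)\u-\nabla\sigma\in L^p(0,T;\cic{L}^p(\Omega))$ shows that $(\u,\sigma)$ is an (S)$_p$-weak solution.

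\emph{Part $(b)$, existence.} If $\u_0\in\cic{V}^{1,\infty}(\Omega)$ and $\f\in L^\infty(0,T;\cic{W}^{1,\infty}(\Omega))$, the data lies in $\cic{V}^{1,q}$ and $L^q(0,T;\cic{W}^{1,q})$ for every finite $q$; applying part $(a)$ with the \emph{same} approximating sequence yields one limiting vorticity $\omega$, hence a single pair $(\u,\sigma)$ that is simultaneously an (S)$_q$-weak solution for all $2<q<\infty$. For the endpoint regularity I would keep track of the constant in Lemma \ref{apvort}, which gives $\|\omega(t)\|_{L^q(\Omega)}\le e^{T}\big(\|\omega_0\|_{L^q(\Omega)}+\|\curl\f\|_{L^q(0,T;L^q(\Omega))}\big)$ uniformly in $q$; letting $q\to\infty$ produces $\curl\u=\omega\in L^\infty(0,T;L^\infty(\Omega))$, while fixing one finite exponent gives $\|\u\|_{\cic{L}^\infty}\lesssim\|\u\|_{\cic{W}^{1,4}}\lesssim\|\omega\|_{L^4}$. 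Thus $\u\in L^\infty(0,T;\cic{V}^{1,\infty}(\Omega))$, as claimed.

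\emph{Part $(b)$, uniqueness.} This is the heart of the theorem. Let $(\u_1,\pi_1)$ and $(\u_2,\pi_2)$ be two such solutions, and set $\v=\u_1-\u_2$, $\sigma=\pi_1-\pi_2$; then $\v$ is tangential and divergence free, $\pt\v\in L^2(0,T;\cic{L}^2(\Omega))$, and
\[
\pt\v+(\u_1\cdot\nabla)\v+(\v\cdot\nabla)\u_2+\nabla\sigma=0 .
\]
Pairing with $\v$ in $\cic{L}^2(\Omega)$, using $b_\Omega(\u_1,\v,\v)=0$ from \eqref{tr-1} and the Helmholtz orthogonality $(\nabla\sigma,\v)_\Omega=0$, reduces the energy balance to $\ddt\|\v(t)\|_{L^2(\Omega)}^2=-2\,b_\Omega(\v,\u_2,\v)$. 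The crucial step is to estimate the right-hand side: for every $r\in[2,\infty)$, H\"older's inequality, the John--Nirenberg bound \eqref{JNineq}, and the interpolation $\|\v\|_{L^{2r'}(\Omega)}^2\le\|\v\|_{\cic{L}^\infty(\Omega)}^{2/r}\|\v\|_{L^2(\Omega)}^{2/r'}$ give
\[
|b_\Omega(\v,\u_2,\v)|\le\|\nabla\u_2\|_{L^r(\Omega)}\,\|\v\|_{L^{2r'}(\Omega)}^2\lesssim r\,\|\u_2\|_{\cic{V}^{1,\infty}(\Omega)}\,\|\v\|_{\cic{L}^\infty(\Omega)}^{2/r}\,\|\v\|_{L^2(\Omega)}^{2-2/r}.
\]
Writing $y(t)=\|\v(t)\|_{L^2(\Omega)}^2$ and, once $y<1$, choosing $r=\log(1/y)$ converts this into the Osgood inequality $\dot y\lesssim y\log(1/y)$. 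Since $\int_{0^+}\frac{\d y}{y\log(1/y)}=+\infty$ and $y(0)=0$, Osgood's criterion forces $y\equiv0$, i.e.\ $\u_1=\u_2$; uniqueness of $\pi$ up to an additive constant is then immediate from $\nabla\pi=\f-\pt\u-(\u\cdot\nabla)\u$. The anticipated obstacle is exactly the availability of \eqref{JNineq} at the corners: a naive $L^\infty$ bound on $\nabla\u_2$ is false at a right-angle corner, so the linear-in-$p$ growth furnished by the $\mathrm{bmo}$ estimate \eqref{epest3} of Proposition \ref{bmolaw} is the linchpin that makes Yudovich's argument run on the rectangle.
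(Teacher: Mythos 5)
Your proposal tracks the paper's proof almost step for step: both run the approximation scheme of Proposition \ref{ThEx} at the $p=2$ level (where the elliptic estimates on the smooth subdomains $\Omega_n$ are uniform in $n$), use the purely transport-theoretic vorticity bound of Lemma \ref{apvort} at exponent $p$ (with the uniform-in-$q$ constant and the passage $q\to\infty$ for the $p=\infty$ case), and only at the limit apply the rectangle estimates \eqref{embedding} and Proposition \ref{bmolaw} to upgrade $\u=\mathbf{K}_\Omega\omega$ — you correctly identified that \eqref{embedding} must be applied on $\Omega$ itself and not on the $\Omega_n$, which is exactly the paper's point. Your uniqueness argument is the same Yudovich scheme, with the linear-in-$r$ John--Nirenberg bound $\|\nabla\u_2\|_{L^r}\lesssim r$ descending from \eqref{epest3}, the same interpolation, and the same logarithmic choice of exponent; invoking Osgood's criterion instead of explicitly integrating $\dot Y\le K\,Y\log(M/Y)$ as in \eqref{uniqfin} is a purely cosmetic difference.

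There is, however, one genuinely circular step in part $(a)$, precisely at the point where the paper's proof deviates from that of Theorem \ref{ThEx1}. When you invoke the annihilator characterization \eqref{anni}, the construction so far has only produced an (S)$_2$-weak solution, so all you know is $\pt\u\in L^2(0,T;\cic{L}^{s}(\Omega))$ for some $s<2$; consequently $\cic{w}=\pt\u+(\u\cdot\nabla)\u-\f$ lies only in $L^2(0,T;\cic{L}^{s}(\Omega))$, and running the argument of Theorem \ref{ThEx1} \emph{verbatim} yields only $\sigma\in L^2(0,T;W^{1,s}(\Omega))$, not $\sigma\in L^p(0,T;W^{1,p}(\Omega))$. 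Your subsequent ``reading off'' of $\pt\u=\f-(\u\cdot\nabla)\u-\nabla\sigma\in L^p(0,T;\cic{L}^p(\Omega))$ therefore presupposes the $L^p$-regularity of $\nabla\sigma$ that the annihilator step was supposed to deliver. The paper breaks this circle before touching the pressure: since $\u(t)\in\cic{L}^p_\tau(\Omega)$ and $P_\Omega\nabla\pi=0$, one has $\pt\u=\pt(P_\Omega\u)=P_\Omega(\pt\u)=P_\Omega\big(\f-(\u\cdot\nabla)\u\big)$ almost everywhere, and the Geng--Shen bound of Theorem \ref{helmthm} (valid on convex domains for all $1<p<\infty$) together with $(\u\cdot\nabla)\u\in L^\infty(0,T;\cic{L}^p(\Omega))$ — via $\cic{W}^{1,p}(\Omega)\hookrightarrow\cic{L}^\infty(\Omega)$ and \eqref{bddfine} — gives $\pt\u\in L^p(0,T;\cic{L}^p(\Omega))$ directly; only then does the annihilator argument deliver $\nabla\pi\in L^p(0,T;\cic{L}^p(\Omega))$. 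You have all the ingredients in hand (you cite Theorem \ref{helmthm} yourself), so the repair is one line, but as written the order of your pressure recovery does not close.
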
\begin{proof}[Proof of  the existence in Theorem \ref{ThEx3}] Let for the moment $2<p<\infty$ and $\u_0,\f$ be given as in the statement of the theorem. 
We indicate again with $\u^{(n)}$ the solution to the approximating problems \eqref{Pn} on the subdomains $\Omega_n$, with initial and forcing data $\u^{(n)}_0, \f^{(n)}$ defined by the same procedure used in Subsection  \ref{ss41p}. Set also  $\omega^{(n)}=\curl \u^{(n)}$.  Repeating word by word the proofs of Lemmata  \ref{apvort} and \ref{eqvort} yields that $
\{\widetilde{ \omega^{(n)}}\}$ is equicontinuous in $\C([0,T]; \mathrm{w}-L^{p}(\Omega))$ and obeys the bound 
\begin{equation}
\label{vortpf21}
\|\widetilde{ \omega^{(n)}}\|_{L^\infty(0,T; L^{p}(\Omega))} \leq \e^{T}\big( \|\u_0\|_{\cic{V}^{1,p}(\Omega)} + \|\cic{f}\|_{L^p((0,T);\cic{W}^{1,p}(\Omega))}\big)^{\frac{1}{p}}:= \e^{T} R_p. 
\end{equation}
Letting  $\omega$ be a limit point of $
\{\widetilde{ \omega^{(n)}}\}$ in $\C([0,T]; \mathrm{w}-L^{p}(\Omega))$ and defining the candidate solution $\u:= \mathbf{K}_\Omega \omega$, and the stream function $\Psi= \mathrm{G}_\Omega\omega$, it follows from the elliptic regularity estimate \eqref{embedding} that
\begin{equation}
\label{boundPsi2}
\Psi(t) \in W^{2,p} \cap W^{1,p}_0(\Omega), \; \|\Psi(t)\|_{ W^{2,p}(\Omega) } \lesssim \|\omega\|_{L^\infty(0,T; L^p(\Omega))}   \leq \Q(R_p), \quad\forall t \in [0,T].
\end{equation} 
We instead repeat the proof of Lemmata \ref{apder} in the case $p=2$, and obtain for the sequence of stream functions $\Psi^{(n)}= \mathrm{G}_{\Omega_n}\omega^{(n)} $ is such that  \begin{equation} \label{vortpf21bis} \widetilde{  \Psi^{(n)}} \to \Psi \textrm{    in }
   L^{2}(0,T; H^1_0 (\Omega) ),\qquad \{\widetilde{\pt \Psi^{(n)}}\}  \; \textrm{    bdd   in }
   L^{2}(0,T; L^  {s }(\Omega) ), \quad 1<s<2,
 \end{equation}  
 so that also $\pt \Psi \in L^{2}(0,T; L^  {s }(\Omega) ) $ for each  $1<s<2$.
The convergence \eqref{vortpf21bis} allows us to repeat the  arguments of Subsection \ref{ss43p}, showing that the candidate solution $\u:= \mathbf{K}_\Omega \omega$ is indeed a weak solution in the sense of Subsection \ref{DefWS} (with $p=2$), as well as the arguments of the proof of Theorem \ref{ThEx1} for the case $p=2$, recovering the pressure $\pi$ and showing also that $(\u,\pi)$ is a (S)$_2$-weak solution  in the sense of 
Definition \ref{Defsws}, with the additional property that
\begin{equation} \label{bddfine}
\u \in  L^\infty(0,T; \cic{V}^{1,p}(\Omega)) \hookrightarrow L^\infty(0,T; \cic{W}^{1,p}(\Omega)), \qquad \u(t) \in \cic{W}^{1,p}(\Omega) \quad\forall t \in [0,T],
\end{equation}
a consequence of \eqref{boundPsi2}. To upgrade to an (S)$_p$-weak solution, we are only left to prove that $\pt u, \nabla \pi \in L^p(0,T; \cic{L}^{p}(\Omega))$. It is easy to see that
$$
\pt \u =\pt (P_\Omega \u) =P_\Omega (\pt \u) = - P_\Omega \big( (\u \cdot \nabla) \cic u + \f\big)
$$
equalities holding almost everywhere in $\Omega \times (0,T)$. Using that $P_\Omega$ is a bounded operator on $\cic{L}^p(\Omega)$ one can  estimate (here the implied constants might depend on $p>2$)
\begin{align*}
\|\pt \u\|_{L^p(0,T;\cic{L}^p(\Omega))} & \lesssim \|    (\u \cdot \nabla) \u\|_{L^p(0,T;\cic{L}^p(\Omega))}  + \|\f \|_{L^p(0,T;\cic{L}^p(\Omega))}  \\ &   \leq  \|\nabla\u\|_{{L^p(0,T;\cic{L}^p(\Omega))}} \|\u\|_{{L^\infty(0,T;\cic{L}^\infty(\Omega))}} + R_p \\ & \lesssim  \|\u\|_{{L^\infty(0,T;\cic{W}^{1,p}(\Omega))}}^2 +  R_p<\infty;
\end{align*}
we have also used the Sobolev embedding $\cic{W}^{1,p}(\Omega)  \hookrightarrow\cic{L}^\infty(\Omega)$ and \eqref{bddfine} to conclude. Arguing exactly like in the proof of Theorem \ref{ThEx1}, one then also recovers that $\nabla \pi \in L^p(0,T; \cic{L}^{p}(\Omega))$, which was what was left to prove.

 The case $p=\infty$ can be dealt with as follows. One constructs an approximating sequence of problems as in Subsection \ref{ss41p}, with data
\begin{equation}
\label{approxdatas}
\widetilde{\omega^{(n)}_0} \to \omega_0  \quad \textrm{in } \; L^{\infty}(\Omega), \qquad \|\f^{(n)}-\f\|_{ L^\infty(0,T; \cic{W}^{1,\infty}(\Omega_n))} \to 0, \quad n \to \infty.
\end{equation}
Since the  $R_p$ in \eqref{vortpf21} are bounded uniformly in $2<p<\infty$ by 
$$
R:=R(\u_0,\f):=\big(\|\u_0\|_{{\cic{V}^{1,\infty}(\Omega)}}+\|{\cic{f}}\|_{ L^\infty(0,T; {\cic{W}^{1,\infty}(\Omega))}}\big)
$$
one can pass to the $\lim\sup$ as $p \to \infty$, and obtain that 
\begin{equation}
\label{vortpf24}
\|\widetilde {\omega^{(n)}}\|_{L^\infty(0,T; L^{\infty}(\Omega))} \leq  \e^T R.
\end{equation}
Arguing similarly to what we did in Lemma \ref{eqvort} will   give that the sequence $\{\widetilde {\omega^{(n)}}\}$  is precompact in $\C([0,T]; \mathrm{w}^\star-L^{\infty}(\Omega))$. 
One then chooses once again a limit point  $\omega$ of $
\{\widetilde {\omega^{(n)}}\}$ in $\C([0,T]; \mathrm{w}^\star-L^{\infty}(\Omega))$ and defines the candidate solution $\u:= \mathbf{K}_\Omega \omega$. An application of Proposition \ref{bmolaw} then entails   
$ \u \in L^\infty(0,T; \cic{V}^{1,\infty}(\Omega)) \hookrightarrow L^\infty(0,T; \cic{W}^{1,\mathrm{bmo}_z}(\Omega));$ in particular,
\begin{equation} \label{bs2}
\|\u\|_{{L^\infty(0,T;  \cic{W}^{1,\mathrm{bmo}_z}(\Omega))}} \leq  C R(\u_0,\cic{f})\e^T.\end{equation}
Arguing in the same way as in the case $2<p<\infty$, one obtains that $\u$   is a (S)$_p$-weak solution  in the sense of 
Definition \ref{Defsws} for all $2<p<\infty$; this completes the proof of existence in the case $p=\infty$. 
\end{proof} 
\begin{remark}    Note that in the case $p=\infty$ we do not have a uniform $L^\infty(0,T;\cic{bmo}_{r}(\Omega ))$ estimate on $\pt   \u $. Despite $(\u \cdot \nabla) \cic u$ being indeed bounded in $L^\infty(0,T; \cic{bmo}_{r}(\Omega))$, to achieve such an estimate, we would need to prove first that $P_{\Omega} \in \mathcal{L}( \cic{bmo}_{z}(\Omega ) \to \cic{bmo}_{r}(\Omega )  )$, analogues of which hold true for smoother domains.\end{remark}

\begin{proof}[Proof of  the uniqueness in Theorem \ref{ThEx3}] The proof is analogous to Yudovich's proof in the case of a smooth domain. With $K$ we denote a positive constant, possibly varying from line to line and depending only on $(\u_0,\cic{f})$.   Assume that there exist two solutions $(\u^{1},\pi^1), (\u^{2},\pi^2)$ corresponding to the same data $(\u_0, \cic{f})$. Preliminarily observe that, from \eqref{JN}, Proposition \ref{bmolaw}, and \eqref{bs2}, 
\begin{align}
\label{bound}
\|\nabla \u^j\|_{\cic{L}^p(\Omega)}& \leq  C_\Omega p \|\nabla \u^j\|_{\mathrm{bmo}_z(\Omega)^{2\times 2}} \leq C_\Omega p \|\curl \u^j\|_{\mathrm{bmo}_z(\Omega) } \\&  \leq C_\Omega p \|\curl \u^j\|_{L^\infty(\Omega) } \leq C_\Omega       R(\u_0,\cic{f})p:= K p  \nonumber
\end{align}
for all $2 \leq p <\infty$.
  The difference $\u=\u^2-\u^1, \, \pi = \pi^2-\pi^1$ then satisfies the equation
\begin{equation}
\label{uniqueness}
\pt \u + (\u \cdot \nabla) \u^1  + (\u^2 \cdot \nabla) \u +\nabla \pi =0
\end{equation}
almost everywhere in $\Omega \times (0,T)$. Denote by $Y(t):= \|\u(t)\|_{\cic{L}^2(\Omega)}^2$. We take the scalar product of \eqref{uniqueness} with $\u$ and integrate on $\Omega$. After (legitimately) integrating by parts, applying H\"older's inequality with an arbitrary $p>2$ to be chosen later, and using interpolation one obtains the differential inequality
\begin{align}
\label{uniquenessaaa}
&\quad \ddt Y = - \big((\u \cdot \nabla) \u^1, \u\big)_{\Omega}   \leq K\|\nabla \u^1\|_{L^p(\Omega)} \|\u\|_{\cic{L}^{2p'}(\Omega)} \\ & \leq K\|\nabla \u^1\|_{L^p(\Omega)} \|\u\|_{\cic{L}^{2}(\Omega)}^{\frac{1}{p'}} \|\u\|_{\cic{L}^{\infty}(\Omega)}^{\frac{1}{p}}    \leq KpY^{1-\frac1p};\nonumber\end{align}
 in the last step, we used \eqref{bs2} to bound $\|\u\|_{\cic{L}^{\infty}(\Omega)}$, and \eqref{bound}. The bound $Y(t) \leq K$ for all $t \in [0,T]$ makes possible to choose a constant $M$ large  enough, depending only on $(\u_0,\cic{f})$ and $\Omega$,  such that $p=p(t) =\log \frac{M}{Y(t)} >2 $ for all $t$. Choosing $p=p(t)$ in \eqref{uniquenessaaa}, and observing that with this choice $Y^{-\frac{1}{p}}\leq K$,
 the above differential inequality then turns into 
 $$
 \ddt Y(t) \leq KY(t) \log \frac{M}{Y(t)}.
 $$
 Let $\eps>0$ be given; integrating on $(\eps,t)$, we obtain that
\begin{equation}
\label{uniqfin}
 Y(t) \leq M \big(Y(\eps)/M)^{\e^{-K(t-\eps)}}, \qquad \forall t >\eps. 
\end{equation}
 Due to the continuity $\u^i \in \C\big([0,T]; \cic{L}_\tau^2 (\Omega)\big),$ the functional $Y$ is continuous on $[0,T]$ and $Y(0)=0$. Passing to the limit  as $\eps \to 0$ in \eqref{uniqfin} we obtain that $Y(t) \equiv 0 $ on $[0,T]$. Thus $\u^1=\u^2$, which  forces $\nabla\pi^1=\nabla\pi^2$ too. This completes the proof. \end{proof}
\begin{remark} \label{remLL} We further comment on the spatial regularity of the unique solution  $\u$ of Theorem \ref{ThEx3} in the case $\u_0\in \cic{V}^{1,\infty}(\Omega)$. Inequality \eqref{bound} holding for each $2\leq p<\infty$ implies, via standard arguments (see for example \cite[Lemma 4.28]{Adams}), that $\u=\u^j$ is a log-Lipschitz vector field on $\Omega$, that is
$$
\|\u\|_{\cic{LL}(\Omega)}:= \|\u\|_{\cic{L}^\infty(\Omega)}+\sup_{\substack{x,y \in \Omega \\ 0<|x-y|\leq 1}} \frac {|\u(x)-\u(y)|}{\psi_{LL}(|x-y|)  } \lesssim K, \qquad \psi_{LL}(s) :=  s{\log(\e+s^{-1})}.
$$
This is sufficient to ensure that the flow 
$
(x_0,t) \mapsto x(t;x_0):= \Phi_t(x_0)
$
generated by the Cauchy problem
$$
\begin{cases}
\dot x (t;x_0) = \u(x (t;x_0), t) & t>0,
\\ x (t;x_0)= x_0 \in \Omega
\end{cases}
$$
is uniquely defined. It is easy to check  that the vorticity $\omega:= \curl \u$ then satisfies
$$
\omega(x,t):=\curl\,\u_0\big((\Phi_{t})^{-1}(x)\big) +  \int_0^t \curl \f \big( (\Phi_{t-s})^{-1}(x),s\big)\, \d s.
$$
\end{remark}

\section{Additional remarks}
\label{s6}
\subsection{(S)$_p$-weak solutions and turbulence} We show that, when $p\geq \frac32$, (S)$_p$-weak solutions  obey conservation of kinetic energy. In this range,  the embedding $$L^{\infty}(0,T; \cic{V}^{1,p} (\Omega))\hookrightarrow L^{p'}(0,T; \cic{L}^{z(p)}_\tau(\Omega)) $$
is continuous (see also Remark \ref{q432}). Therefore, we can multiply \eqref{PLq} by $\u$ and integrate on $\Omega \times (0,T)$. The convective term 
$b_\Omega(\u,\u,\u)$ vanishes identically, so that we get that the solution $\u$ of Theorem \ref{ThEx1} satisfies the energy equality
\begin{equation} \label{Eneq}
\ddt \|\u(t)\|_{\cic{L}^2(\Omega)}^2 - \big({\f}(t), \u(t) \big)_\Omega = 0, \qquad t \in (0,T). \end{equation}
 \label{remturb}
In relation with \eqref{Eneq}, we recall that the issue of conservation of energy for the solutions of the Euler equations is an important issue related to turbulence, although 
not in the focus of this article.  The relation of the conservation of energy for the Euler equations in relation with the Onsager conjecture \cite{Ons49} is well explained in 
e.g. the article by Schnirelman, \cite{Shn03} and its review in MathSciNet.  In this direction Uriel Frisch just draw our attention to the most recent result of De Lellis and 
Sz{\'e}kelyhidi  Jr., \cite{LS12} who proved the existence of solutions of the Euler equations which are Holder with exponent $< 1/10$ and which do not satisfy the 
conservation of energy; see also the earlier results \cite{CLS12}.  More precisely \cite{LS12} contains the construction of solutions to the Euler equations with initial vorticity in $BV(\Omega)$; the kinetic energy of these solutions decays with time, and a result of non uniqueness of solutions is proved (see Theorem \ref{ThEx3} below about uniqueness). See \cite{BT} for a broad discussion of those and other issues concerning the Euler equations and fluid mechanics more generally.

\subsection{(S)-Weak solutions  on $\mathbb T^2$} We consider the Euler system \eqref{P} on $\mathbb{T}^2=[0,1]^2$, replacing non-penetrating boundary conditions with periodic boundary conditions. We are interested in deriving a counterpart of Theorem \ref{ThEx3}, in the case $p=\infty$ in this setting.  The relevant space for the Euler data is then
$$
\cic{V}^{1,\infty}(\mathbb{T}^2)=\big\{ \u \in \cic{L}^\infty(\T): \curl \u \in L^\infty(\mathbb T^2), \,\div\u=0 \;\mathrm{ on }\; \mathbb T^2, \;\u \; \textrm{1-periodic in each } x_1,x_2\big\}
$$
In this setting, there is no need to smoothen up the domain, and one obtains smooth approximating solutions $\u^{(n)}$ originating from smooth  data $\u_0^{(n)},\cic{f}^{(n)}$ approximating respectively  $\u_0$ and  $\f$ as in \eqref{approxdatas}. The same  scheme of Sections \ref{s4} and \ref{s5} can be devised to obtain the following result.
\begin{theorem} Let  $\u_0 \in \cic{V}^{1,\infty}(\mathbb{T}^2), \; \f \in L^\infty(0,T; \cic{V}^{1,\infty}(\mathbb{T}^2)) $ be given. Then there exists a unique (S)-Weak solution to the Euler system \eqref{P} on $\mathbb{T}^2$, with periodic boundary condition, and data $\u_0, \f$; that is, a  pair $(\u:\Omega\times [0,T] \to \R^2,\pi:\Omega\times(0,T) \to \R)$ 
with $ \u(0)= \u_0 $ and 
\begin{align*}
& \cic{u} \in W^{1,\infty}\big([0,T];  \cic{BMO} (\mathbb{T}^2) \big)\cap L^{\infty}\big(0,T; \cic{V}^{1,\infty}(\mathbb{T}^2)\big), \quad \u(t) \in   \cic{V}^{1,\infty}(\mathbb{T}^2) \;\;\forall t \in[0,T],     \\
& \pt \u  \in L^\infty\big( 0,T  ; \cic{BMO}(\mathbb{T}^2) \big), \quad \nabla \u  \in L^\infty\big( 0,T ; \mathrm{BMO} (\mathbb{T}^2)^{2\times 2}\big)  \\
&  \nabla\pi \in  L^{q}\big( 0,T ; \cic{L}^{q}(\mathbb{T}^2 ), \qquad \forall q<\infty,  \\
&    
\pt \u + (\u \cdot \nabla) \u +\nabla \pi = {\f} \qquad \textrm{ in }  \; L^{q}\big(0,T;  \cic{L}^{q}(\mathbb{T}^2)\big)\;\;\forall q<\infty \quad\textrm{ and a.e. in }\mathbb{T}^2 \times (0,T) .  
\end{align*}
\end{theorem}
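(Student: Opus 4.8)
The plan is to run the approximation-and-compactness scheme of Sections \ref{s4}--\ref{s5} \emph{mutatis mutandis}, exploiting the fact that $\mathbb{T}^2$ is a smooth compact manifold without boundary: no domain regularization is needed, and the role of Proposition \ref{bmolaw} is played by the classical Calder\'on--Zygmund theory on the torus. First I would fix smooth approximating data $\u_0^{(n)},\f^{(n)}$ (e.g.\ by Fourier truncation or mollification) converging to $\u_0,\f$ as in \eqref{approxdatas}, and invoke Kato's theorem to produce smooth periodic solutions $\u^{(n)}$, whose vorticity $\omega^{(n)}=\curl\u^{(n)}$ solves the transport equation $\pt\omega^{(n)}+(\u^{(n)}\cdot\nabla)\omega^{(n)}=\curl\f^{(n)}$. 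The $L^p$ energy estimate of Lemma \ref{apvort} and the passage $p\to\infty$ carried out in the proof of Theorem \ref{ThEx3} then give, uniformly in $n$, the bound $\|\widetilde{\omega^{(n)}}\|_{L^\infty(0,T;L^\infty(\mathbb{T}^2))}\leq \e^T R(\u_0,\cic{f})$.

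The key elliptic input, replacing Proposition \ref{bmolaw}, is that on $\mathbb{T}^2$ the solution operator $\mathrm{G}_{\mathbb{T}^2}$ of the mean-zero Poisson problem is a Fourier multiplier, so that $D^2\mathrm{G}_{\mathbb{T}^2}$ is a second-order Calder\'on--Zygmund operator; in particular it maps $L^\infty(\mathbb{T}^2)\to\mathrm{BMO}(\mathbb{T}^2)$ and $L^q(\mathbb{T}^2)\to L^q(\mathbb{T}^2)$ for every $1<q<\infty$. Setting $\u=\mathbf{K}_{\mathbb{T}^2}\omega=\nabla^\perp\mathrm{G}_{\mathbb{T}^2}\omega$ (the mean of $\u$ being tracked separately via $\ddt\int_{\mathbb{T}^2}\u=\int_{\mathbb{T}^2}\f$, since the nonlinear and pressure terms integrate to zero), this immediately yields $\nabla\u\in L^\infty(0,T;\mathrm{BMO}(\mathbb{T}^2)^{2\times2})$ together with $\|\u\|_{L^\infty(0,T;\cic{V}^{1,\infty}(\mathbb{T}^2))}\lesssim \e^T R(\u_0,\cic{f})$, the torus analogue of \eqref{bs2}.

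For compactness and passage to the limit I would argue exactly as in Steps 1--3 of Subsection \ref{ss43p}: the time-equicontinuity of $t\mapsto(\widetilde{\omega^{(n)}}(t),\psi)_{\mathbb{T}^2}$ (Lemma \ref{eqvort}) together with the uniform $L^\infty$ bound gives, via Ascoli--Arzel\`a, a limit point $\omega\in\C([0,T];\mathrm{w}^\star-L^\infty(\mathbb{T}^2))$; one then sets $\u=\mathbf{K}_{\mathbb{T}^2}\omega$, verifies the weak formulation, and recovers the pressure $\pi$ as in the proof of Theorem \ref{ThEx1}. The point where the torus genuinely improves on the rectangle---and what I expect to be the main obstacle to verify---is the regularity $\pt\u\in L^\infty(0,T;\cic{BMO}(\mathbb{T}^2))$. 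Since $\nabla\u\in\mathrm{BMO}$ forces $\u$ to be log-Lipschitz (via John--Nirenberg, cf.\ Remark \ref{remLL}), hence H\"older continuous, the product $(\u\cdot\nabla)\u$ lies in $L^\infty(0,T;\cic{BMO}(\mathbb{T}^2))$, because the product of a H\"older function with a BMO function is BMO on a bounded set. Writing $\pt\u=-P_{\mathbb{T}^2}\big[(\u\cdot\nabla)\u+\f\big]$ almost everywhere, the conclusion then \emph{requires the boundedness of the Leray projector $P_{\mathbb{T}^2}$ on $\cic{BMO}(\mathbb{T}^2)$}. On the torus this holds because $P_{\mathbb{T}^2}$ is itself a matrix of Fourier multipliers built from the Riesz transforms, hence a Calder\'on--Zygmund operator bounded on $\mathrm{BMO}$; this is precisely the property that the Remark following Theorem \ref{ThEx3} flags as unavailable on the rectangle, and combined with $\u\in L^\infty(0,T;\cic{BMO})$ it gives $\u\in W^{1,\infty}(0,T;\cic{BMO}(\mathbb{T}^2))$. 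The lower-order pressure regularity $\nabla\pi\in L^q(0,T;\cic{L}^q(\mathbb{T}^2))$ for all $q<\infty$ follows from the $L^q$-boundedness of the same multipliers and John--Nirenberg \eqref{JN}.

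Finally, uniqueness follows Yudovich's energy method verbatim from the uniqueness proof of Theorem \ref{ThEx3}. From $\nabla\u^j\in\mathrm{BMO}(\mathbb{T}^2)$ and \eqref{JN} one obtains $\|\nabla\u^j\|_{L^p(\mathbb{T}^2)}\leq Kp$ uniformly in $2\leq p<\infty$, whence each $\u^j$ is log-Lipschitz. Testing the equation for the difference $\u=\u^2-\u^1$ against $\u$, integrating by parts, and estimating $\ddt Y\leq Kp\,Y^{1-\frac1p}$ with $Y=\|\u\|_{\cic{L}^2(\mathbb{T}^2)}^2$, the optimization $p=\log(M/Y)$ and the ensuing Osgood-type integration force $Y\equiv0$, i.e.\ $\u^1=\u^2$ and hence $\nabla\pi^1=\nabla\pi^2$.
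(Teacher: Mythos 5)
Your proposal is correct and follows the paper's global scheme (approximation, uniform vorticity bounds, Ascoli--Arzel\`a compactness in $\C([0,T];\mathrm{w}^\star\text{-}L^\infty)$, Biot--Savart, pressure recovery, Yudovich uniqueness), but it diverges from the paper at the one step that matters, namely the $\mathrm{BMO}$ bound on $\pt\u$. You write $\pt \u = P_{\T^2}[\f - (\u\cdot\nabla)\u]$ and put $(\u\cdot\nabla)\u$ in $\mathrm{BMO}$ by invoking the pointwise-multiplier fact that a H\"older (or log-Lipschitz) function times a $\mathrm{BMO}$ function is $\mathrm{BMO}$ on a bounded set; this is true, but it is a genuinely nontrivial lemma (Stegenga-type: one must check that the H\"older modulus $\ell^\alpha$ beats the logarithmic growth $\log(1/\ell)$ of local averages of $\mathrm{BMO}$ functions), and as stated it is the weakest link in your write-up and should be proved or cited. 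The paper avoids multiplier theory entirely: following the stream-function computation of Lemma \ref{apder}, it writes $\pt (\u^{(n)})^\perp = -P^\perp_{\T^2}\big[-\omega^{(n)}\u^{(n)} + (\f^{(n)})^\perp\big]$, so the nonlinearity enters only through the product $\omega^{(n)}\u^{(n)}$, which is bounded in $\cic{L}^\infty(\T^2)$ (both factors are bounded); then only $L^\infty \hookrightarrow \mathrm{BMO}$ and the boundedness of the Riesz-transform-built projector on $\mathrm{BMO}(\T^2)$ are needed, and the estimate is obtained uniformly at the level of the approximations rather than on the limit equation. If you want to keep your formulation while dispensing with the multiplier lemma, note the identity $(\u\cdot\nabla)\u = \tfrac12\nabla|\u|^2 - \omega\,\u^\perp$, so that $P_{\T^2}[(\u\cdot\nabla)\u] = -P_{\T^2}[\omega\,\u^\perp]$ and you are reduced to the paper's $L^\infty$ product. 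On the credit side, your explicit tracking of the harmonic (mean) part of $\u$ via $\ddt\int_{\T^2}\u = \int_{\T^2}\f$ addresses a genuine feature of the torus --- the vorticity does not determine $\u$ there --- which the paper's remark passes over in silence; the rest (Calder\'on--Zygmund bounds for $D^2\mathrm{G}_{\T^2}$ replacing Proposition \ref{bmolaw}, $L^q$ pressure regularity via John--Nirenberg, and the Osgood/Yudovich uniqueness argument) matches the paper.
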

\begin{remark} Comparing the above Theorem to Theorem \ref{ThEx3}, the only improvement in regularity is that $\pt \u$ is uniformly in time bounded in  $\cic{BMO}(\mathbb{T}^2)$ (which, due to John-Nirenberg's inequality, is stronger than being uniformly bounded in  $\cic{L}^q(\Omega)$ for all $q<\infty$). This descends from the uniform boundedness of $\pt \u^{(n)}$, which is obtained in turn by modifying the proof of Lemma \ref{apder} as follows. It is well known that the Leray projector $P_{\mathbb{T}^2}$ is given by
$$
(P_{\mathbb{T}^2} \v)_j = \v_j +R_j R_1 \v_1 +  R_j R_2 \v_2 
$$
where $R_j$ stands for the $j$-th Riesz transform, which is bounded on $\mathrm{BMO}(\T^2)$ by standard Calderon-Zygmund theory.  Therefore 
$$
\|P_{\mathbb{T}^2} \v\|_{\cic{BMO}(\T^2)},\;  \|P_{\mathbb{T}^2}^{\perp} \v\|_{\cic{BMO}(\T^2)} \lesssim \|  \v\|_{\cic{BMO}(\T^2)}.
$$
Arguing as in \eqref{fill}, we then have, for each fixed $t \in (0,T)$, $$
 \|P_{\mathbb{T}^2}^{\perp} (\omega^{(n)} \u^{(n)})\|_{\cic{BMO}(\T^2)} \lesssim  \|  (\omega^{(n)} \u^{(n)})\|_{\cic{BMO}(\T^2)} \leq \| \omega^{(n)} \u^{(n)})\|_{\cic{L}^\infty(\T^2)} \leq \Q(\u_0, \f).  $$ Carrying on with  the proof of Lemma \ref{apder} with no further changes implies the uniform bound
$$
 \|\pt \u^{(n)}\|_{L^\infty(0,T;\cic{BMO}(\T^2))} \lesssim  \|  (\omega^{(n)} \u^{(n)} \|_{\cic{BMO}(\T^2)} \leq \| \omega^{(n)} \u^{(n)} \|_{\cic{L}^\infty(\T^2)} \leq \Q(\u_0, \f)$$
as claimed.
\end{remark}
\subsection{Theorem \ref{ThEx3} for more general domains with corners and open problems} We wish to discuss a    more general class of domains,  of which the rectangle $[0,L_1]\times [0,L_2]$, object of Theorem \ref{ThEx3}, is possibly the simplest instance. We say that  $\Omega \subset \R^2$ is a \emph{polygonal-like domain} if it is a bounded  simply connected open set and 
$\partial \Omega$ is a piecewise $\C^2$ planar curve, with finitely many points $\{S_1,\ldots,S_N\}$ of discontinuity for the tangent vector $\mathbf{n}(x),$ 
$x \in \partial \Omega$; we call $\Gamma_j$ the component of $\partial\Omega$ with end points $S_j,S_{j+1}$ (and $S_{N+1} = S_1 $ ). 
Then the tangent and normal vectors   $\cic{t}_j,\mathbf{n}_j$ to $\Gamma_j$  belong to $   \C^{1}(\Gamma_j)$, for each $j=1,\ldots,N$, 
and we write
$$
\mathbf{n}_{-} (S_j)= \lim_{\substack{x \to S_j \\x \in \Gamma_{j-1}}  } \mathbf{n} _j(x), \qquad \mathbf{n}_{+} (S_j)= 
\lim_{\substack{x \to S_j \\x \in \Gamma_{j }}  } \mathbf{n} _j(x).
$$
We assume for definiteness that $\mathbf{n}_{-} (S_j) \neq \mathbf{n}_{+} (S_j)$
and set
\begin{equation}
  \alpha_j := \mathrm{angle}(\mathbf{n}_{-} (S_j), \mathbf{n}_{+} (S_j)), \quad j=1,\ldots,N, \qquad \underline{\alpha}:=\min \{\alpha_j\}, \overline{\alpha}:= \max \{\alpha_j\}.
\end{equation}  
For this class of domains, under the additional assumption that $\overline \alpha \leq \frac\pi2$, given  $1<p<\infty $ and $f \in L^p(\Omega)$,  the unique solution $\mathrm{G}_\Omega f \in H^{1}_0(\Omega)$ belongs to $W^{2,p}(\Omega)$ as well and satisfies the estimate
\begin{equation}
\label{epestpoly}
 \|{{\mathrm{G}}}_{\Omega}f\|_{W^{2,p}(\Omega)} \leq C_{p,\Omega} \|f\|_{L^p(\Omega)}.
\end{equation}
If it is the case that $\frac\pi2<  \alpha_j <\pi$ for some $j$, a singular part is present and analogues of \eqref{epestpoly} only hold in a range of exponents $1<p <p_0, $ with $p_0 >2$ depending on $\overline \alpha$. These results are taken from Sections 4.1-4.4 of Grisvard's classical treatise \cite{Grisvard}   (in particular, Theorem 4.4.4.13 therein).  See also \cite{Grisvard2}.
\label{remarkpoly}
Tracking the proof in \cite{Grisvard}, one finds that dependence on $p$ of the constant $C_{p,\Omega}$ in \eqref{epestpoly} as $p \to \infty$ is $O(p^2)$, in contrast to the case of smooth domains, where $C_{p,\Omega}=O(p)$. The appearance of an extra power of $p$ is intrinsic in Grisvard's proof, which reduces the study of the Dirichlet problem on the corner with aperture $\alpha$ to an elliptic problem on the strip $\R\times (0,\alpha)$. The (second derivatives) of the correspondent Green's functions on the strip  are  Calder\'on-Zygmund kernels only separately in each variable, and thus fall into the scope of product Calder\`on-Zygmund theory. In general,   the $L^p$ norm of a product Calderon-Zygmund operator  is allowed to grow at order $p^2$ as $p\to\infty$, and the relevant endpoint space is no longer $\mathrm{BMO} $, but the smaller    product $\mathrm{BMO} (\R\times (0,\alpha))$. See \cite{CF} for references on this point.

A consequence of our analysis in Section \ref{Sec3}, in particular, of Proposition \ref{bmolaw}, is that $C_{p,\Omega}=O(p)$ in the particular case of a rectangular domain: this is a consequence of \eqref{epest3} and John-Nirenberg's inequality. 
Our analysis does not extend to a general polygonal-like domain with acute corners (where an estimate of the type \eqref{epestpoly}  for all $p>2$ is allowed to hold).
However, estimate \eqref{epest3} can be shown to hold also for polygonal-like domains  where all  angles are of the type $\alpha_j=\frac{\pi}{m(j)}$ for some integer $m(j)\geq 2$. The proof is analogous: after localization  and a suitable change of coordinates which straightens up the boundary, one studies the resulting (general) elliptic problem in the  corner with aperture $\alpha_j$, and applies an odd reflection argument to reach the halfspace. The proof is finished by applying the analogue of Theorem \ref{stein-thm} for a general elliptic operator of class $\C^2$. Thus, for a polygonal-like domain with angles    $\alpha_j=\frac{\pi}{m(j)}$, $C_{p,\Omega}=O(p)$ in  \eqref{epestpoly}. 

 As shown in the proof of Theorem \ref{ThEx3},  having $C_{p,\Omega}=O(p)$ in  \eqref{epestpoly} is sufficient to get    uniqueness  of solutions with bounded initial vorticity to the Euler system \eqref{P} on $\Omega$.
Hence, Theorem \ref{ThEx3} holds true \emph{verbatim} if we replace the rectangle $[0,L_1 ] \times[0,L_2]  $ with a domain of this sort. However, the proof of part (a) of Theorem \ref{ThEx3} only relies on the same techniques used for Theorem \ref{ThEx1} and on the elliptic estimate \eqref{epestpoly}. Therefore, this part of Theorem  \ref{ThEx3} holds  for all polygonal-like domains with $\overline\alpha \leq \frac\pi2$. It is an open question whether part (b) of Theorem \ref{ThEx3} holds for this class of domains as well.


\bibliography{BDT-Euler-Aug2012}{}
\bibliographystyle{amsplain}

\end{document}